\DeclareSymbolFont{rsfscript}{OMS}{rsfs}{m}{n}
\DeclareSymbolFontAlphabet{\mathrsfs}{rsfscript}
\DeclareMathOperator{\Core}{Core}
\DeclareMathOperator{\Sch}{Sch}
\DeclareMathOperator{\St}{Stab}
\DeclareSymbolFont{rsfscript}{OMS}{rsfs}{m}{n}
\newtheorem{theorem}{Theorem}
\newtheorem{prop}{Proposition}
\newtheorem{lemma}{Lemma}
\newtheorem{cor}{Corollary}
\newcommand{\rf}{\rightarrow}
\newcommand{\la}{\langle}
\newcommand{\ra}{\rangle}
\newcommand{\ul}{\underline}
\newcommand{\wt}{\widetilde}
\newcommand{\inv}{^{-1}}
\newcommand{\oo}{\overline}
\newcommand{\To}{Tor}
\def\vlongrightarrow{\relbar\joinrel\longrightarrow}
\def\vvlongrightarrow{\relbar\joinrel\vlongrightarrow}
\def\vvvlongrightarrow{\relbar\joinrel\vvlongrightarrow}
\def\vvvvlongrightarrow{\relbar\joinrel\vvvlongrightarrow}
\def\vvvvvlongrightarrow{\relbar\joinrel\vvvvlongrightarrow}
\def\vvvvvvlongrightarrow{\relbar\joinrel\vvvvvlongrightarrow}
\def\vlongmapright#1{\smash{\mathop{\vvlongrightarrow}\limits^{#1}}}
\def\vvlongmapright#1{\smash{\mathop{\vvvlongrightarrow}\limits^{#1}}}
\newcommand{\longfr}[2]{\smash{\stackrel{\text{\tiny{$#1|#2$}}}{\vlongrightarrow}}}
\newcommand{\vlongfr}[2]{\smash{\stackrel{\text{\tiny{$#1|#2$}\,}}{\vvlongrightarrow}}}
\newcommand{\vvlongfr}[2]{\smash{\stackrel{\text{\tiny{$#1|#2$}\,}}{\vvvlongrightarrow}}}
\newcommand{\vvvlongfr}[2]{\smash{\stackrel{\text{\tiny{$#1|#2$}\,}}{\vvvvlongrightarrow}}}
\newcommand{\exlongfr}[2]{\smash{\stackrel{\text{\tiny{$#1|#2$}\,}}{\vvvvvvlongrightarrow}}}
\newcommand{\mapright}[1]{\smash{\stackrel{\text{\tiny{$#1$}}}{\vlongrightarrow}}}
\title{A geometric approach to (semi)-groups defined by automata via dual transducers} 
\author{Daniele D'Angeli\\
        Institut f\"{u}r mathematische Strukturtheorie (Math C)\\
Technische Universit\"{a}t Graz\\
Steyrergasse 30, 8010 Graz, Austria.\\
        \texttt {dangeli@math.tugraz.at}
        \and
       	Emanuele Rodaro \\
        Department of Mathematics, University of Porto\\
        Rua do Campo Alegre, 687, Porto, 4169-007, Portugal.\\				
        \texttt{emanuele.rodaro@fc.up.pt}
        }
\date{\today}
\begin{document}
\maketitle

\begin{abstract}
We give a geometric approach to groups defined by automata via the notion of enriched dual of an inverse transducer. Using this geometric correspondence we first provide some finiteness results, then we consider groups generated by the dual of Cayley type of machines. Lastly, we address the problem of the study of the action of these groups in the boundary. We show that examples of groups having essentially free actions without critical points lie in the class of groups defined by the transducers whose enriched dual generate a torsion-free semigroup. Finally, we provide necessary and sufficient conditions to have finite Schreier graphs on the boundary yielding to the decidability of the algorithmic problem of checking the existence of Schreier graphs on the boundary whose cardinalities are upper bounded by some fixed integer.
\end{abstract}

\section{Introduction}\label{sec:intro}
This paper frames into the setting of the study of the properties of graphs and groups generated by finite automata. This theory became very popular after the introduction of the (first) Grigorchuk's group as the first example of a group with intermediate growth, i.e super-polynomial and sub-exponential (see, for example, \cite{DynSubgroup}). The class of groups generated by automata, or invertible transducers, also contains groups with special and interesting properties, among these, in the last years people have highlighted a very strong and surprising connection with complex dynamics and dynamical system. It is worth mentioning here the seminal works of V. Nekrashevych (see \cite{volo}), that contributed, for example, in clarifying the correspondence between expanding complex maps (and the associated Julia sets) and contracting self-similar groups (and their Schreier graphs). Schreier graphs, naturally appear in this context: they correspond to the stabilizers of words in $A^{\ast} \sqcup A^{\omega}$ and can be depicted as orbital graphs of the action of the generators of the group on $A^n$ and $A^{\omega}$. Since the action of such groups preserves the uniform Bernoulli measure on $A^{\omega}$, one can study the dynamical system given by the action of $\mathcal{G}(\mathrsfs{A})$ on the boundary of the tree and the relative action on the space of its subgroups stabilizers \cite{DynSubgroup}.
Given $\mathcal{G}(\mathrsfs{A})$, the problem of classifying (topologically and isometrically) its Schreier graphs on the boundary $A^{\omega}$ is still open. Partial results were obtained in \cite{basilica} for the action of the Basilica group and in \cite{BDN} for groups generated by bounded automata (see also \cite{alf}). Here two kinds of actions are particularly interesting: the totally non free actions and the essentially free actions. The latter corresponds to groups whose stabilizers on the boundary are almost all trivial, the former is given by groups whose stabilizers on the boundary are almost all distinct (see \cite{Vershik}). In the case of essentially free actions, it is an open question weather it is possible to find examples of non residually abelian groups acting without critical points, i.e. elements in the boundary whose stabilizer does not coincide with the stabilizer of any its neighbor. In this work we address these questions by studying the geometry of an enriched version $(\partial \mathrsfs{A})^{-}$ of the dual automaton $\partial \mathrsfs{A}$, obtained by exchanging the state space and the input/output set of the automaton $\mathrsfs{A}$ and adding arrows labelled by formal inverses of the input/output alphabet in order to equip $\partial \mathrsfs{A}$ with a structure of inverse transducer. The study of such automaton is the key tool of the paper, since it encodes very well the action of the group $\mathcal{G}(\mathrsfs{A})$ and can offer an alternative approach for attacking the problems that we have presented above. In particular, in Section \ref{sec: dynamics}, we show that examples of groups with all trivial stabilizers in the boundary can be found only in the class of transducers whose enriched dual generates a torsion-free semigroup. The properties captured by the enriched dual automaton are used to study three different types of the so called Cayley machines, regarded as dual automata of groups. Two machines generate finite groups, while the dual of the usual Cayley machine generates in general a group, containing the free semigroup, and for which we provide a recursive way to build its presentation.
A useful tool that we introduce in this paper is the components growth $\chi_{\mathrsfs{A}}(n)$ defined as the size of the smallest component of the $n-$th power of the dual automaton of $\mathrsfs{A}$. This allows us to give a decidable algorithm to determine the existence of a finite Schreier graph on the boundary.

The paper is organized as follows: in Section \ref{sec: preliminaries} and Section \ref{sec: inverse} we introduce the notation and the notion of inverse transducer that is crucial for our analysis. Section \ref{sec: dual} introduces the enriched dual transducer and highlights its geometric relationship with the original transducer by showing its connection with the Schreier and the Cayley graphs of the group defined by the original transducer. Then we pass to study the finiteness conditions for (semi-)groups generated by transducers by introducing the notion of supesymmetric automata. The power of our method is demonstrated by the fact that we can easily recover some important known results (see, for example, Corollary \ref{cor:piccantino} in Section \ref{sec: finiteness}). In Section \ref{sec: cayley} we study transducers modeling Cayley graphs of finite groups. The last section contains the results relative to the study of the Schreier graphs on the boundary of the tree.

\section{Preliminaries}\label{sec: preliminaries}
A word $w$ over a finite \emph{alphabet} $A$ is a tuple $w=(w_{1},\ldots, w_{n})$ of element of $A$ which is more often represented as a string $w=w_{1}\ldots w_{n}$, and for convenience we will use both the notations freely. In the sequel $A$ denotes a finite set, called \emph{alphabet}, $A^{*}$ ($A^{+}$) is the free monoid with identity $1$ (semigroup) on $A$. By $A^{\le n}$ ($A^{\ge n}$, $A^{n}$) we denote the set of words of length less or equal (greater or equal, equal) to $n$. With $A^{\omega}$ we denote the set of right infinite words in $A$, we use the vector notation, and for an element $\ul{u}=u_{1}u_{2}\ldots u_{i}\ldots\in A^{\omega}$ the prefix of length $k>0$ is denoted by $\ul{u}[k]=u_{1}u_{2}\ldots u_{k}$, while the factor $u_{i}\ldots u_{j}$ is denoted by $\ul{u}[i,j]$.
In this paper we deal mostly with automata and transducers from a geometric point of view, this means that we deal with paths, connected components, and, in general properties of their underling graphs. Therefore, to have a common notation both for transducers and for automata, we present them as directed labelled graphs. Hence, in our context a directed graph (for short \emph{digraph}) is a graph in the sense of Serre (see for instance \cite{Serre}). Thus, it is a tuple $(V,E,\iota,\tau)$, where $V$ is the set of vertices, $E$ is the set of edges, and $\iota, \tau$ are functions from $E$ into $V$ giving the initial and terminal vertices, respectively. We may depict an edge $e\in E$ as $e=q\mapright{}q'$ where $q=\iota(e), q'=\tau(e)$. The respective labelled structure is an $A$-labelled directed graph (for short an $A$-digraph) which is a tuple $\Gamma=(V,E,A,\iota,\tau,\mu)$ where $(V,E,\iota,\tau)$ is a digraph and $\mu:E\rightarrow A$ is the \emph{labeling} map. In this case we depict $e\in E$ with $q=\iota(e), q'=\tau(e)$, $\mu(e)=a$ as $e=q\mapright{a}q'$. A path is a sequence of edges $p=e_{1},\ldots,e_{k}$ such that $\tau(e_{i})=\iota(e_{i+1})$ for $i=1,\ldots, k-1$, and we say that the origin of $p$ is $\iota(p)=\iota(e_{1})$ and the terminal vertex is $\tau(p)=\tau(e_{k})$. The \emph{label} of the path $p$ is the word $\mu(p)=\mu(e_{1})\ldots \mu(e_{k})$, and we graphically represent this path as $p=v\vlongmapright{\mu(p)}v'$.
When we fix a vertex $v\in V$ (a \emph{base point}), the pair $(\Gamma, v)$ can be seen as a language recognizer ($A$-automaton), whose language recognized is the set:
$$
L(\Gamma,v)=\{\mu(p):p\mbox{ is a path in }\Gamma\mbox{ with }\iota(p)=\tau(p)=v\}
$$
When we pinpoint the vertex $v$, we implicitly assume that the underlying $A$-digraph of $(\Gamma,v)$ is the connected component of $\Gamma$ containing $v$. For a graph $\Gamma$ with set of vertices $V(\Gamma)$, for any $v\in V(\Gamma)$ we denote $\|(\Gamma, v)\|$ the cardinality of the set of vertices of a connected component of $\Gamma$ containing $v$, and we put $\|\Gamma\|=\max_{v\in V}\{\|(\Gamma, v)\|\}$. An important operation between automata is the \emph{product} of automata. Given two automata $(\Gamma_{1},v_{1})$, $(\Gamma_{2},v_{2})$ with $\Gamma_{1}=(V_{1},E_{1},A,\iota_{1},\tau_{1},\mu_{1})$, $\Gamma_{2}=(V_{2},E_{2},A,\iota_{2},\tau_{2},\mu_{2})$ the product is the automaton is given by
$$
(\Gamma_{1},v_{1})\times(\Gamma_{2},v_{2})=\left((V_{1}\times V_{2}, D,A,\iota_{1}\times \iota_{2}, \tau_{1}\times\tau_{2}, \mu_{1}\times\mu_{2}), (v_{1},v_{2})\right)
$$
where $D\subseteq E_{1}\times E_{2}$ is the set of pair of edges $(e_{1},e_{2})$ such that $\mu_{1}(e_{1})=\mu_{2}(e_{2})$. It is a standard fact that the product of two automata recognizes the intersection of the two languages recognized, i.e.
$$
L((\Gamma_{1},v_{1})\times(\Gamma_{2},v_{2}))=L(\Gamma_{1},v_{1})\cap L(\Gamma_{2},v_{2})
$$
A morphism $\psi: \Gamma\rightarrow \Gamma'$ between the two $A$-digraphs $\Gamma=(V,E,A,\iota,\tau,\mu)$ and $\Gamma'=(V',E',A,\iota',\tau',\mu')$ consists of a pair $(\psi_{V},\psi_{E})$ of maps $\psi_{V}: V\rf V'$ and $\psi_{E}:E\rf E'$ preserving the $A$-digraph structure, i.e. $\iota'(\psi_{E}(e))=\psi_{V}(\iota(e))$, $\tau'(\psi_{E}(e))=\psi_{V}(\tau(e))$, and it is labeling preserving $\mu(e)=\mu'(\psi_{E}(e))$. A morphism of two automata $\psi: (\Gamma,v)\rightarrow (\Gamma',v')$ is just a morphism $\psi$ of the two underlying $A$-digraphs preserving the base points. The $A$-labelled graph $\Gamma$ is called \emph{complete} (\emph{deterministic}) if for each vertex $v\in V$ and $a\in A$ there is (at most) an edge $e\in E$ with $\iota(e)=v$ and $\mu(e)=a$.
\\
More commonly in literature a deterministic and complete $A$-labelled graph $\Gamma$ with a finite number of vertices is referred as \emph{semiautomaton} \cite{HowieAuto} and it can be equivalently described by a $3$-tuple $\mathcal{A}=(Q,A,\delta)$ where $Q$ is a finite set of states, $A$ is a finite alphabet, $\delta:Q\times A\rf Q$ is the \emph{transition function}. Fixing a base-point $q\in Q$ the language recognized by the automaton (DFA) $(\mathcal{A}, q)$ is the set $L(\mathcal{A},q)=\{u\in A^{*}: \delta(q,u)=q\}$.
This notation is compatible with the one presented above since this language is the same as the language of the underlying $A$-digraph of $\mathcal{A}$ with base point $q$. Another way of seeing the map $\delta$ is as an action $Q\overset{\cdot}{\curvearrowleft} A^{*}$ of $A^{*}$ on $Q$ defined inductively by the formula $q\cdot (a_{1}\ldots a_{n})=\delta(q, a_{1})\cdot  (a_{2}\ldots a_{n})$ for any $q\in Q$, $a_{1}\ldots a_{n}\in A^{*}$. The semiautomaton $\mathcal{A}$ is called \emph{reversible} whenever this action is a permutation, or equivalently the maps $\delta(-,a):Q\rf Q$ are permutations for any $a\in A$.
\\
In this paper we clearly consider alphabetical transducers with the same input and output alphabet, for further details on the general theory of automata and transducers we refer the reader to \cite{Eil, HowieAuto}. A \emph{finite state Mealy automaton}, shortly a transducer, is a $4$-tuple $\mathrsfs{A} = (Q,A,\delta,\lambda)$ where $(Q,A,\delta)$ is a semiautomaton, while $\lambda:Q\times A\rf A$ is called the \emph{output function}. This function defines an action $Q\overset{\circ}{\curvearrowright} A^{*}$ of $Q$ on $A^{*}$ defined inductively by
$$
q\circ (a_{1}\ldots a_{n})=\lambda(q,a_{1})\left((q\cdot a_{1})\circ (a_{2}\ldots a_{n})\right)
$$
Usually, in the theory of groups (semigroup) defined by automata, people are interested in the action $Q\overset{\circ}{\curvearrowright} A^{*}$ of the group (semigroup) on the rooted tree $A^*\cup A^{\omega}$.
However, in this paper we are dealing with the dual of a transducer in which the role is reversed. For this reason, it is convenient to deal with these actions in a symmetric way by extending them to $Q^{*}$ in the natural way. Thus, the pair $(Q^{*}\overset{\cdot}{\curvearrowleft} A^{*}$, $Q^{*}\overset{\circ}{\curvearrowright} A^{*})$ is called the associated \emph{coupled-actions} of the transducer $\mathrsfs{A}$, and henceforth we will write $\mathrsfs{A}=(Q,A,\cdot,\circ)$.
\\
From the geometrical point the transducer $\mathrsfs{A}$ can be visualized as an $A\times A$-labelled digraph $(Q,E,A\times A,\iota, \tau, \mu)$ with edges of the form $q\mapright{a|b}q'$ whenever $q\cdot a=q'$ and $q\circ a=b$, and we will make no distinction between the transducer and the digraph notation. Thus, given a transducer $\mathrsfs{A}=(Q,A,\cdot,\circ)$ we implicitly assume that it also has a structure $\mathrsfs{A}=(Q,E,A\times A,\iota, \tau, \mu)$ of $A\times A$-labelled digraph. Considering just the input or the output labeling, we may define the \emph{input automaton} $\mathrsfs{A}_{\mathcal{I}}=(Q,E,A,\iota, \tau, \mu_{1})$ where $\mu_{1}(e)=a$ whenever $e=q\mapright{a|b}q'$ is an edge of $\mathrsfs{A}$, and dually the \emph{output automaton} is $\mathrsfs{A}_{\mathcal{O}}=(Q,E,A,\iota, \tau, \mu_{2})$ where $\mu_{2}(e)=b$ whenever $e=q\mapright{a|b}q'$ is an edge of $\mathrsfs{A}$. Sometimes, we may write $L(\mathrsfs{A},q)=L(\mathrsfs{A}_{\mathcal{I}},q)$.
\\
The \emph{product} of the two machines $\mathrsfs{A} = (Q,E,A\times A,\iota, \tau, \mu)$, $\mathrsfs{B} = (T,D,A\times A,\iota', \tau', \mu')$ is the machine $\mathrsfs{A}\mathrsfs{B} = (Q\times T,F,A\times A,\oo{\iota},\oo{\tau},\oo{\mu})$ whose edges are given by $(q,q')\mapright{a|b}(p,p')$ whenever $q\mapright{a|c}p$ is an edge in $E$ and  $q'\mapright{c|b}p'$ is an edge in $D$. The $k$-th power of the machine $\mathrsfs{A}$ is defined inductively by $\mathrsfs{A}^{k}=(\mathrsfs{A}^{k-1})\mathrsfs{A}$, and we put $\mathrsfs{A}^{k}_{\mathcal{I}}=(\mathrsfs{A}^{k})_{\mathcal{I}}$. For the study of the dynamic in the boundary we are interest in limits of these powers. Consider the sequence $\{\mathrsfs{A}^{k}_{\mathcal{I}}\}$ of $A\times A$-graphs with the morphisms $\phi_{i,i-1}$ mapping the edge $(q_{1},\ldots q_{k})\mapright{a}(p_{1},\ldots, p_{k})$ into the edge $(q_{1},\ldots q_{k-1})\mapright{a}(p_{1},\ldots, p_{k-1})$. Thus, $(\{\mathrsfs{A}^{k}_{\mathcal{I}}\}_{k\ge 1}, \phi_{i,j})$ forms an inverse system. The following proposition is a standard fact of the projective limit.
\begin{prop}\label{prop: projective}
Let $\varprojlim \{\mathrsfs{A}^{k}_{\mathcal{I}}\}_{k\ge 1}=\mathrsfs{A}_{\mathcal{I}}^{\infty}$ be the inverse limit of the inverse system $(\{\mathrsfs{A}^{k}_{\mathcal{I}}\}_{k\ge 1}, \phi_{i,j})$, and let $\varphi_{k}:\mathrsfs{A}_{\mathcal{I}}^{\infty}\rf \mathrsfs{A}^{k}_{\mathcal{I}}$ be the natural maps. Then, for any $\ul{q}\in Q^{\omega}$ we have:
$$
L(\mathrsfs{A}_{\mathcal{I}}^{\infty}, \ul{q})=\bigcap_{k>0} L\left (\mathrsfs{A}^{k}_{\mathcal{I}}, \varphi_{k}(\ul{q})\right)
$$
with $ \varphi_{k}(\ul{q})=\ul{q}[k]$.
\end{prop}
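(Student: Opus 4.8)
The plan is to make the inverse limit completely explicit in this concrete situation and then check the two inclusions, the forward one by functoriality of $L(-,-)$ and the backward one by a coherent-choice (K\"onig's lemma) argument which, here, is almost free because the automata in play are deterministic and complete. First I would unwind the inverse system. A vertex of $\mathrsfs{A}^{k}_{\mathcal{I}}$ is a tuple in $Q^{k}$ and $\phi_{k,k-1}$ simply deletes the last coordinate; hence a vertex of $\mathrsfs{A}_{\mathcal{I}}^{\infty}=\varprojlim\{\mathrsfs{A}^{k}_{\mathcal{I}}\}$ is a compatible family of such tuples, i.e. an element $\ul{q}\in Q^{\omega}$, and under this identification the canonical map $\varphi_{k}$ is exactly $\ul{q}\mapsto\ul{q}[k]$, which establishes the parenthetical claim of the statement. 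Likewise an edge of $\mathrsfs{A}^{k}_{\mathcal{I}}$ has the form $(q_{1}\ldots q_{k})\mapright{a}(p_{1}\ldots p_{k})$ and $\phi_{k,k-1}$ deletes the last coordinates while keeping the label $a$; so an edge of $\mathrsfs{A}_{\mathcal{I}}^{\infty}$ is a triple $\ul{q}\mapright{a}\ul{p}$ such that $\ul{q}[k]\mapright{a}\ul{p}[k]$ is an edge of $\mathrsfs{A}^{k}_{\mathcal{I}}$ for every $k$. In particular each $\varphi_{k}:\mathrsfs{A}_{\mathcal{I}}^{\infty}\rf\mathrsfs{A}^{k}_{\mathcal{I}}$ is a label- and base-point-preserving graph morphism, and a path of length $n$ in $\mathrsfs{A}_{\mathcal{I}}^{\infty}$ with label $w\in A^{n}$ is the same thing as a family $(p^{(k)})_{k\ge 1}$ of length-$n$ paths, $p^{(k)}$ in $\mathrsfs{A}^{k}_{\mathcal{I}}$ all labelled $w$, with $\phi_{k,k-1}(p^{(k)})=p^{(k-1)}$.

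The inclusion $L(\mathrsfs{A}_{\mathcal{I}}^{\infty},\ul{q})\subseteq\bigcap_{k>0}L(\mathrsfs{A}^{k}_{\mathcal{I}},\ul{q}[k])$ is then immediate: if $p$ is a loop at $\ul{q}$ with $\mu(p)=w$, then $\varphi_{k}(p)$ is a loop at $\varphi_{k}(\ul{q})=\ul{q}[k]$ with the same label, so $w$ belongs to every $L(\mathrsfs{A}^{k}_{\mathcal{I}},\ul{q}[k])$. For the reverse inclusion, fix $w\in A^{n}$ lying in $L(\mathrsfs{A}^{k}_{\mathcal{I}},\ul{q}[k])$ for all $k$, and for each $k$ let $P_{k}$ be the (finite) set of loops at $\ul{q}[k]$ in $\mathrsfs{A}^{k}_{\mathcal{I}}$ labelled $w$. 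By assumption $P_{k}\neq\emptyset$, and $\phi_{k,k-1}$ restricts to a map $P_{k}\rf P_{k-1}$, since the image of a loop at $\ul{q}[k]$ is a loop at $\ul{q}[k-1]$ with the same label. As the inverse limit of a sequence of nonempty finite sets with connecting maps is nonempty, there is a compatible family $(p^{(k)})_{k}$, i.e. a loop $p$ at $\ul{q}$ in $\mathrsfs{A}_{\mathcal{I}}^{\infty}$ with $\mu(p)=w$; hence $w\in L(\mathrsfs{A}_{\mathcal{I}}^{\infty},\ul{q})$. Moreover, since $(Q,A,\delta)$ is deterministic and complete, the product construction shows that each power $\mathrsfs{A}^{k}_{\mathcal{I}}$ is again deterministic and complete, so $P_{k}$ is in fact a singleton (the unique path from $\ul{q}[k]$ with label $w$, which is a loop by hypothesis) and the compatibility of the $p^{(k)}$ is automatic, making the K\"onig-type step superfluous.

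The only point requiring care --- and it is a mild one --- is the backward inclusion, namely producing level-$k$ witnessing loops that cohere across all $k$ into a single path of $\mathrsfs{A}_{\mathcal{I}}^{\infty}$; this is precisely where one invokes either determinism (uniqueness of the path with prescribed origin and label) or, in its absence, finiteness together with the nonemptiness of inverse limits of finite nonempty sets. Everything else is bookkeeping about the explicit description of $\mathrsfs{A}_{\mathcal{I}}^{\infty}$ recorded in the first paragraph.
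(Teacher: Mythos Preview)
Your proof is correct and complete. The paper does not actually give a proof of this proposition: it simply declares it ``a standard fact of the projective limit'' and moves on. Your argument supplies exactly the details one would expect for such a standard fact --- the explicit description of vertices and edges of the limit, the forward inclusion by functoriality of the projections, and the backward inclusion via coherent loops --- and your observation that determinism and completeness of each $\mathrsfs{A}^{k}_{\mathcal{I}}$ makes the K\"onig step unnecessary (each $P_{k}$ is a singleton) is a nice sharpening.
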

From the algebraic point of view the action $Q^{*}\overset{\circ}{\curvearrowright} A^{*}$ gives rise to a semigroup $\mathcal{S}(\mathrsfs{A})$ generated by the endomorphisms $\mathrsfs{A}_{q}$, $q\in Q$, of the rooted tree identified with $A^{*}$ defined by $\mathrsfs{A}_{q}(u)=q\circ u$, $u\in A^{*}$. For $q_{1},\ldots, q_{m}\in Q$ we may use the shorter notation $\mathrsfs{A}_{q_{1}\ldots q_{m}}=\mathrsfs{A}_{q_{1}}\ldots \mathrsfs{A}_{q_{m}}$. An important role in group theory is played by groups defined by invertible transducers, for more details we refer the reader to \cite{volo}. A transducer $\mathrsfs{A} = (Q,A,\cdot,\circ)$ is called \emph{invertible} whenever the map $\lambda(q,\circ):A\rf A$ is a permutation. In this case all the maps $\mathrsfs{A}_{q}$, $q\in Q$, are automorphisms of the rooted regular tree identified with $A^{*}$, and the group generated by these automorphisms is denoted by $\mathcal{G}(\mathrsfs{A})$. Henceforth a generator $\mathrsfs{A}_{q}$ of $\mathcal{G}(\mathrsfs{A})$ is identified with the element $q\in Q$, and its inverse with the formal inverse $q^{-1}\in Q^{-1}=\{q^{-1}:q\in Q\}$. The action of $\mathcal{G}(\mathrsfs{A})$ naturally extends on the \textit{boundary} $A^{\omega}$ of the tree. We are interested in faithful actions hence, throughout the paper we assume all the transducers to have this property. Notice that, since $|Q|<\infty$, the action of $\mathcal{G}(\mathrsfs{A})$ on $A^{\omega}$ cannot be transitive and so it decomposes into uncountably many orbits. Given $v\in A^{\ast}\sqcup A^{\omega}$, the stabilizer $\St_{\mathcal{G}(\mathrsfs{A})}(v)=\{g\in \mathcal{G}(\mathrsfs{A}) \ : \ g(v)=v\}$ is a subgroup of $\mathcal{G}(\mathrsfs{A})$. The stabilizer of the $n-$th level is the normal subgroup $\St_{\mathcal{G}(\mathrsfs{A})}(n)=\cap_{v\in A^n} \St_{\mathcal{G}(\mathrsfs{A})}(v)$.
In many cases one tries to determine the stabilizers of the elements in $A^{\omega}$.
The stabilizers are strongly related to the Schreier graphs: given $v\in A^{\ast}\sqcup A^{\omega}$ and its stabilizer $\St_{\mathcal{G}(\mathrsfs{A})}(v)$, the Schreier graph $\Sch(\St_{\mathcal{G}(\mathrsfs{A})}(v), Q\cup Q^{-1})$ corresponds to the orbital graph of $v$ under the action of the generators $Q\cup Q^{-1}$. For an invertible transducer $\mathrsfs{A} = (Q,E,A\times A,\iota, \tau, \mu)$ we define the inverse (transducer) $\mathrsfs{A}^{-1} = (Q^{-1},E,A\times A,\iota, \tau, \mu')$, and there is an edge $q^{-1}\mapright{a|b}p^{-1}$ in $\mathrsfs{A}^{-1}$ whenever $q\mapright{b|a}p$ is an edge in $\mathrsfs{A}$.
\\
Another two important classes of transducers that we consider throughout the paper are the \emph{reversible} and \emph{bireversible} machines. A transducer $\mathrsfs{A}$ is called reversible whenever $\mathrsfs{A}_{\mathcal{I}}$ is a reversible semiautomaton, and it is called bireversible if in addition also $\mathrsfs{A}_{\mathcal{O}}$ is a reversible semiatomaton, hence in this case $\mathrsfs{A}$ must be necessarily invertible. A reversible invertible transducer will be also called $RI$-transducer. The following proposition shows that reversibility is preserved under the product of machines.
\begin{prop}\label{prop: product reversible}
Let $\mathrsfs{A}$, $\mathrsfs{B}$ be two transducers. If $\mathrsfs{A}$ and $\mathrsfs{B}$ are reversible (bireversible), then $\mathrsfs{A}\mathrsfs{B}$ is reversible (bireversible). Moreover, if $\mathrsfs{A}$ satisfies the property that for any $a\in A$ there is an edge $q\mapright{b|a}q'$, then $\mathrsfs{A}\mathrsfs{B}$ is reversible if and only if both $\mathrsfs{A}$ and $\mathrsfs{B}$ are reversible.
\end{prop}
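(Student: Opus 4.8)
The plan is to reduce everything to an explicit description of the transition function of a product of transducers. First I would record that $\mathrsfs{A}\mathrsfs{B}$ is again a deterministic complete transducer and that, reading off the unique factorisation $q\fr{a}{c}p$, $q'\fr{c}{b}p'$ of an edge, its transition and output functions are $(q,q')\cdot a=(q\cdot a,\ q'\cdot(q\circ a))$ and $(q,q')\circ a=q'\circ(q\circ a)$. For the reversible case, with $\mathrsfs{A}$ and $\mathrsfs{B}$ reversible, I would check that $(q,q')\mapsto(q,q')\cdot a$ is injective on the finite set $Q\times T$ for every $a\in A$: equality of first coordinates forces $q_{1}=q_{2}$ by reversibility of $\mathrsfs{A}$, whence equality of second coordinates forces $q_{1}'=q_{2}'$ by reversibility of $\mathrsfs{B}$. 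Injectivity on a finite set is bijectivity, so $(\mathrsfs{A}\mathrsfs{B})_{\mathcal{I}}$ is reversible.

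For the bireversible case I would first note that invertibility passes to products, since $a\mapsto q'\circ(q\circ a)$ is a composition of two permutations of $A$, so that $(\mathrsfs{A}\mathrsfs{B})_{\mathcal{O}}$ is a genuine semiautomaton; reversibility of $(\mathrsfs{A}\mathrsfs{B})_{\mathcal{O}}$ I would then obtain from the reversible case applied to inverses. The bridge is the natural isomorphism $(\mathrsfs{A}\mathrsfs{B})^{-1}\cong\mathrsfs{B}^{-1}\mathrsfs{A}^{-1}$ via $(q,q')^{-1}\leftrightarrow(q'^{-1},q^{-1})$, verified directly on edges, together with the fact that $(\mathrsfs{C}^{-1})_{\mathcal{I}}$ is $\mathrsfs{C}_{\mathcal{O}}$ up to relabelling of states: thus $\mathrsfs{A},\mathrsfs{B}$ bireversible means $\mathrsfs{A}^{-1},\mathrsfs{B}^{-1}$ reversible, hence $\mathrsfs{B}^{-1}\mathrsfs{A}^{-1}\cong(\mathrsfs{A}\mathrsfs{B})^{-1}$ is reversible, i.e. $(\mathrsfs{A}\mathrsfs{B})_{\mathcal{O}}$ is reversible, and together with the first paragraph this makes $\mathrsfs{A}\mathrsfs{B}$ bireversible.

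For the ``moreover'' part I would use a counting argument on fibres. Assuming $\mathrsfs{A}\mathrsfs{B}$ reversible, fix $a\in A$; for $p\in Q$ the fibre of $(q,q')\mapsto(q,q')\cdot a$ over $\{p\}\times T$ is $\{q\in Q:q\cdot a=p\}\times T$, and since this map is a bijection of $Q\times T$ the fibre has cardinality $|T|$, forcing $\{q:q\cdot a=p\}$ to be a singleton for every $p$; hence $\mathrsfs{A}$ is reversible (this step needs no hypothesis on $\mathrsfs{A}$). To recover $\mathrsfs{B}$, use the hypothesis: every $c\in A$ equals $q\circ a$ for some edge $q\fr{a}{c}q'$ of $\mathrsfs{A}$; restricting the injective transition map of $\mathrsfs{A}\mathrsfs{B}$ to $\{q\}\times T$ it becomes $(q,q')\mapsto(q\cdot a,\ q'\cdot c)$, an injection --- hence a bijection --- between two sets of size $|T|$, so $q'\mapsto q'\cdot c$ is a permutation of $T$; as $c$ was arbitrary, $\mathrsfs{B}$ is reversible. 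The reverse implication is the first paragraph.

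I expect the last point to be the main obstacle. Without the hypothesis that every letter of $A$ occurs as an output of $\mathrsfs{A}$, the transitions of $\mathrsfs{B}$ over the unused letters are never exercised inside $\mathrsfs{A}\mathrsfs{B}$, so $\mathrsfs{B}$ may fail to be reversible while $\mathrsfs{A}\mathrsfs{B}$ is reversible; the hypothesis is exactly what lets one realise each letter $c$ as some $q\circ a$ and push the permutation property back from $\mathrsfs{A}\mathrsfs{B}$ to $\mathrsfs{B}$. Everything else is routine once the explicit transition formula and the identity $(\mathrsfs{A}\mathrsfs{B})^{-1}\cong\mathrsfs{B}^{-1}\mathrsfs{A}^{-1}$ are in hand.
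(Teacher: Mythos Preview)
Your proof is correct and follows the same overall strategy as the paper: the forward direction is the same injectivity check on the transition map, and the bireversible case is handled identically via the identity $(\mathrsfs{A}\mathrsfs{B})^{-1}\simeq\mathrsfs{B}^{-1}\mathrsfs{A}^{-1}$ applied to the reversible case.

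The one genuine difference is in the ``moreover'' clause. The paper argues by contradiction through an explicit case split: first assuming $\mathrsfs{A}$ non-reversible and $\mathrsfs{B}$ reversible (using reversibility of $\mathrsfs{B}$ to manufacture two product edges landing at the same state), then assuming $\mathrsfs{A}$ reversible and $\mathrsfs{B}$ non-reversible (here the output-surjectivity hypothesis on $\mathrsfs{A}$ is used to feed the bad letter of $\mathrsfs{B}$). Your fibre-counting argument is cleaner: it shows that reversibility of $\mathrsfs{A}$ follows from reversibility of $\mathrsfs{A}\mathrsfs{B}$ with no hypothesis at all, since the preimage of $\{p\}\times T$ under the bijection $(-)\cdot a$ is $\{q:q\cdot a=p\}\times T$ and must therefore have size $|T|$. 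You then recover $\mathrsfs{B}$ by restricting the bijection to a slice $\{q\}\times T$, which is where the output-surjectivity of $\mathrsfs{A}$ enters. This makes transparent that the extra hypothesis is needed only for the $\mathrsfs{B}$ half of the converse, a point the paper's case analysis leaves somewhat implicit.
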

\begin{proof}
Suppose that $\mathrsfs{A}\mathrsfs{B}$ is not reversible, and so there are two distinct edges
$$
(q,q')\mapright{a|b}(p,p'),\quad (s,s')\mapright{a|c}(p,p')
$$
in $\mathrsfs{A}\mathrsfs{B}$ with $(q,q')\neq (s,s')$. Therefore, let $q\mapright{a|d}p$, $s\mapright{a|e}p$, and $q'\mapright{d|b}p'$, $s'\mapright{e|c}p'$ be the corresponding edges of $\mathrsfs{A}$, $\mathrsfs{B}$, respectively. If $q\neq s$, then $\mathrsfs{A}$ is not reversible, and we are done. Thus, we can assume $q=s$, from which we get $d=e$. Since $(q,q')\neq (s,s')$, then $q'\neq s'$, from which we get $s'\mapright{e|c}p'$, $q'\mapright{e|b}p'$ are two different edges in $\mathrsfs{B}$, whence $\mathrsfs{B}$ is not reversible. If both $\mathrsfs{A}$ and $\mathrsfs{B}$ are bireversible, then they are invertible and their inverses $\mathrsfs{A}^{-1}$, $\mathrsfs{B}^{-1}$ are necessarily reversible. Thus, since $\left(\mathrsfs{A}\mathrsfs{B}\right)^{-1}=\mathrsfs{B}^{-1}\mathrsfs{A}^{-1}$, then by the previous result $\left(\mathrsfs{A}\mathrsfs{B}\right)^{-1}$ is reversible, whence $\mathrsfs{A}\mathrsfs{B}$ is bireversible.
\\
Let us prove the last statement, we need to prove the only if part. Thus, assume that $\mathrsfs{A}\mathrsfs{B}$ is reversible. If both $\mathrsfs{A}$ and $\mathrsfs{B}$ are reversible, then we are done. Thus, suppose that $\mathrsfs{A}$ is not reversible and $\mathrsfs{B}$ is reversible, and let $q\mapright{a|b}p$, $s\mapright{a|c}p$ be two edges of $\mathrsfs{A}$ with $q\neq s$. Since $\mathrsfs{B}$ is reversible, there are edges $q'\mapright{b|d}p'$, $s'\mapright{c|e}p'$ in $\mathrsfs{B}$. Hence $(q,q')\mapright{a|d}(p,p')$ and $(s,s')\mapright{a|e}(p,p')$ are two edges of $\mathrsfs{A}\mathrsfs{B}$ with $(q,q')\neq (s,s')$, whence $\mathrsfs{A}\mathrsfs{B}$ is not reversible. On the other hand, suppose that $\mathrsfs{A}$ is reversible and $\mathrsfs{B}$ is not reversible, and let $q'\mapright{c|d}p'$, $s'\mapright{c|e}p'$ be two edges of $\mathrsfs{B}$ with $q'\neq s'$, and by the condition on $\mathrsfs{A}$, let $q\mapright{a|c}p$ be an edge in $\mathrsfs{A}$. Hence, in $\mathrsfs{A}\mathrsfs{B}$ there are two edges $(q,q')\mapright{a|d}(p,p')$, $(q,s')\mapright{a|e}(p,p')$, a contradiction.
\end{proof}

\section{Inverse graphs, inverse transducers and free groups}\label{sec: inverse}

Let $A$ be a finite alphabet and let $\tilde{A}=A\cup A\inv$ be the
\emph{involutive alphabet} where $A\inv$ is the set of \emph{formal}
inverses of $A$. The operator $\inv:A\rightarrow A\inv: a\mapsto
a\inv$ is extended to an involution on the free monoid $\wt{A}^*$ through
$$
1\inv = 1, \;\; (a\inv)\inv=a, \;\; (uv)\inv=v\inv u\inv\;\;\; (a\in
A;\;u,v\in\wt{A}^*).
$$
Let $\sim$ be the congruence on $\wt{A}^*$ generated by the relation
$\{(aa\inv,1)\mid a \in \wt{A} \}$. The
quotient $F_A= \wt{A}^*/\sim$ is the \emph{free group} on $A$, and throughout the paper
$\sigma:\wt{A}^* \to F_A$ denotes the canonical homomorphism. The set of all reduced words on
$\wt{A}^*$, may be compactly written as
$$
R_A=\tilde{A}^*\setminus\bigcup_{a\in\tilde{A}}\tilde{A}^*aa\inv\tilde{A}^*
$$
For each $u\in\tilde{A}^*$, $\overline{u}\in R_A$ is the (unique) reduced word $\sim$-equivalent to $u$. We write also $\overline{u\sigma}=\overline{u}$. As usual, we often identify the elements of $F_A$ with their reduced representatives. For $B\subseteq \wt{A}^*$, $\oo{B}$ denotes the set of reduced words of $B$. An $\wt{A}$-digraph $\Gamma$ is called \emph{involutive} if whenever $p\mapright{a}q$ is and edge of $\Gamma$, so is $q\mapright{a^{-1}}p$; the graph $\Gamma$ is called \emph{inverse} if in addition $\Gamma$ is deterministic. When we depict an inverse graph we can draw just one between the two edges $p\mapright{a}q$, $q\mapright{a^{-1}}p$, $a\in\wt{ A}$, this corresponds to chose an orientation $E^{+}$ on the set of edges $E$. For inverse graphs there is an important property which relates languages to morphisms:
\begin{prop}\label{prop: immersion lang hom}
Given two inverse graphs $\Gamma_{1},\Gamma_{2}$, and two vertices $q_{1},q_{2}$ belonging to $\Gamma_{1},\Gamma_{2}$, respectively. Then $L(\Gamma_{1},v_{1})\subseteq L(\Gamma_{2},v_{2})$ if and only if there is a morphism $\varphi:(\Gamma_{1},v_{1})\rightarrow (\Gamma_{2},v_{2})$. Furthermore, $(\Gamma_{1},v_{1})$ is the minimal inverse automaton (up to isomorphisms) recognizing $L(\Gamma_{1},v_{1})$.
\end{prop}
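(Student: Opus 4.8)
The plan is to establish the biconditional first and then read off minimality. The \emph{if} direction is immediate: a morphism $\varphi\colon(\Gamma_{1},v_{1})\to(\Gamma_{2},v_{2})$ is labelling-preserving and sends $v_{1}$ to $v_{2}$, so it carries any closed path at $v_{1}$ to a closed path at $v_{2}$ with the same label; hence $L(\Gamma_{1},v_{1})\subseteq L(\Gamma_{2},v_{2})$.

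For the \emph{only if} direction I would build $\varphi=(\varphi_{V},\varphi_{E})$ by hand, using that (after pinpointing $v_{1}$) the graph $\Gamma_{1}$ is connected. Given a vertex $w$ of $\Gamma_{1}$, choose a path $p$ from $v_{1}$ to $w$, put $u=\mu(p)$, and set $\varphi_{V}(w)=v_{2}\cdot u$, the endpoint of the path labelled $u$ read from $v_{2}$ in $\Gamma_{2}$ (well defined since $\Gamma_{2}$ is deterministic), provided this is defined. Two observations make the definition sound. First, \emph{readability}: since $\Gamma_{1}$ is involutive, $p\,p\inv$ is a closed path at $v_{1}$ with label $uu\inv$, so $uu\inv\in L(\Gamma_{1},v_{1})\subseteq L(\Gamma_{2},v_{2})$; reading this loop at $v_{2}$ shows in particular that the prefix $u$ can be read, i.e. $v_{2}\cdot u$ is defined. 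Second, \emph{independence of $p$}: if $p_{1},p_{2}$ are two paths from $v_{1}$ to $w$, then $p_{1}p_{2}\inv$ is closed at $v_{1}$, so $\mu(p_{1})\mu(p_{2})\inv\in L(\Gamma_{2},v_{2})$, and since in an inverse graph a path labelled $\mu(p_{2})\inv$ is exactly the reverse of one labelled $\mu(p_{2})$, determinism of $\Gamma_{2}$ forces $v_{2}\cdot\mu(p_{1})=v_{2}\cdot\mu(p_{2})$. Then for an edge $e=w\mapright{a}w'$ of $\Gamma_{1}$, applying the construction to the path $p$ followed by $e$ gives $\varphi_{V}(w')=(\varphi_{V}(w))\cdot a$, so $\Gamma_{2}$ contains an edge $\varphi_{V}(w)\mapright{a}\varphi_{V}(w')$, which we declare to be $\varphi_{E}(e)$; this is forced by determinism, and the construction visibly preserves $\iota,\tau,\mu$ and the base point.

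Minimality would then follow formally. If $(\Gamma_{2},v_{2})$ is another inverse automaton with $L(\Gamma_{2},v_{2})=L(\Gamma_{1},v_{1})$, applying the equivalence in both directions yields morphisms $\varphi\colon(\Gamma_{1},v_{1})\to(\Gamma_{2},v_{2})$ and $\psi\colon(\Gamma_{2},v_{2})\to(\Gamma_{1},v_{1})$. The composite $\psi\varphi$ is an endomorphism of the connected deterministic automaton $(\Gamma_{1},v_{1})$ fixing $v_{1}$; chasing the image of an arbitrary path issuing from $v_{1}$ and using determinism shows $\psi\varphi$ is the identity on vertices, hence on edges, and symmetrically $\varphi\psi=\mathrm{id}$. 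Thus $\varphi$ is an isomorphism, so any inverse automaton recognizing $L(\Gamma_{1},v_{1})$ has at least $|V(\Gamma_{1})|$ vertices and any one with exactly that many is isomorphic to $(\Gamma_{1},v_{1})$; in other words $(\Gamma_{1},v_{1})$ is the minimal such automaton, unique up to isomorphism.

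The only genuinely delicate point — and where all the hypotheses are actually used — is the well-definedness of $\varphi_{V}$: involutivity of $\Gamma_{1}$ gives readability, determinism of $\Gamma_{2}$ together with the language inclusion gives independence of the chosen path, and connectedness of $\Gamma_{1}$ ensures every vertex is reached. Once these are in place, the extension to edges and the minimality argument are routine bookkeeping.
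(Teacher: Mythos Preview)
Your argument is correct: the path-reading construction of $\varphi$ (readability via $uu^{-1}\in L(\Gamma_{1},v_{1})\subseteq L(\Gamma_{2},v_{2})$, well-definedness via $\mu(p_{1})\mu(p_{2})^{-1}$, extension to edges by determinism) and the rigidity of pointed connected deterministic automata for the minimality clause are exactly the standard proof. The paper itself does not prove this proposition; it simply states that the result ``belongs to the folklore'' and refers the reader to \cite{BarSil}, so your write-up is in fact more detailed than what appears in the paper.
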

\begin{proof}
The proof belongs to the folklore, see for instance \cite{BarSil}.
\end{proof}
Let $\Gamma=(V,E,A,\iota,\tau,\mu)$ be an inverse graph, when we fix a base point $v$ of $\Gamma$, the pair $(\Gamma,v)$ is often referred as an \emph{inverse automaton}. There is an important connection between inverse automata and subgroups of $F_{A}$, we recall the basic facts and we refer the reader to \cite{BarSil,KaMi02} for more details.
A path $p\mapright{z}q$ in $\Gamma$ is called \emph{reduced} if $\oo{z}=z$. It is not difficult to see that if $p\vvlongmapright{uvv^{-1}w}q$ is a path in $\Gamma$, then $p\mapright{uw}q$ is also a path. Therefore, if there is a path connecting two vertices, then there is also a reduced path connecting them, hence $\oo{L(\Gamma, v)}\subseteq L(\Gamma, v)$. We can consider a smaller inverse subautomaton with this property, the \emph{core} of $(\Gamma,v)$ is the induced inverse subautomaton $\Core(\Gamma,v)$ of $(\Gamma,v)$ containing all the reduced paths $p$ with $\iota(p)=\tau(p)=v$. From the language point of view this operation does not erase reduced words:
$$
\oo{L(\Gamma,v)}\subseteq L(\Core(\Gamma,v))\subseteq L(\Gamma,v)
$$
in particular $\oo{L(\Gamma,v)}=\oo{ L(\Core(\Gamma,v))}$. The connection between subgroups of $F_{A}$ and inverse graphs (automata) can be summarized in the following theorem.
\begin{theorem}\label{theo: inverse-groups}
Let $\Gamma=(V,E,A,\iota,\tau,\mu)$ be an inverse graph, $v\in V$, and let $Y$ be a spanning tree of $(\Gamma,v)$ (a connected subtree of $\Gamma$ with the same set of vertices of $(\Gamma,v)$). For each vertex $t\in (\Gamma,v)$ there is a unique path $p_{t}$ in $Y$ connecting $v$ to $t$. Fix an orientation $E^{+}$, and let $T^{+}\subseteq E^{+}$ be the set of edges lying outside $Y$. Then $H=\oo{L(\Gamma,v)}$ is a (free) subgroup of $F_{A}$ generated by:
$$
\{\oo{\mu(p_{s})\mu(e)\mu(p_{t})^{-1}}: e=s\vlongmapright{\mu(e)}t\in T^{+}\}
$$
Furthermore, if $\Sch(H,A)$ is the Schreier graph of $H$, and $(\Sch(H,A),H)$ is the associated inverse automata, then $\Core(\Gamma,v)=\Core(\Sch(H,A),H)$. Conversely, if $H$ is a subgroup of $F_{A}$, then $(\Sch(H,A),H)$ is an inverse automaton such that $\sigma^{-1}(H)=L(\Sch(H,A),H)$, in particular $H=\oo{L(\Sch(H,A),H)}$.
\end{theorem}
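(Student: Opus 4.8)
The statement splits into three parts, which I would treat in order: that $H=\oo{L(\Gamma,v)}$ is a (free) subgroup with the displayed generating set; that $\Core(\Gamma,v)=\Core(\Sch(H,A),H)$; and the converse construction from an abstract subgroup. For the first part I would note that $L(\Gamma,v)$ is a submonoid of $\widetilde{A}^{*}$ closed under the involution $u\mapsto u\inv$: concatenating two loops at $v$ gives a loop at $v$, and reversing a loop gives a loop with the inverse label because $\Gamma$ is involutive. Hence $\sigma(L(\Gamma,v))$ is a subgroup of $F_{A}$, and since $\oo{L(\Gamma,v)}$ is exactly the set of reduced representatives of its elements, $H$ is (identified with) that subgroup; freeness is then the Nielsen--Schreier theorem. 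To get the generating set, take a loop $p=e_{1}\cdots e_{k}$ at $v$ through vertices $v=v_{0},v_{1},\dots,v_{k}=v$; inserting $\mu(p_{v_{i}})\inv\mu(p_{v_{i}})$ between $e_{i}$ and $e_{i+1}$ and using $\mu(p_{v_{0}})=\mu(p_{v_{k}})=1$ yields
\[
\mu(p)\ \sim\ \prod_{i=1}^{k}\mu(p_{v_{i-1}})\,\mu(e_{i})\,\mu(p_{v_{i}})\inv .
\]
If (up to orientation) $e_{i}$ lies in $Y$, then $p_{v_{i-1}}e_{i}$ and $p_{v_{i}}$ are $Y$-paths from $v$ to $v_{i}$, hence have the same reduced label, so that factor is trivial in $F_{A}$; otherwise $e_{i}$ corresponds (up to orientation) to an edge of $T^{+}$ and the factor is a listed generator or its inverse. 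Thus every element of $H$ is a product of the listed elements and their inverses. (The standard fundamental-group-of-a-graph argument, identifying $H$ with $\pi_{1}(\Gamma,v)$ via $\sigma$, in fact shows this set is a free basis, but only generation is needed here.)

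For the second part I would use Proposition~\ref{prop: immersion lang hom}. Since $\Core(\Gamma,v)$ is an induced subautomaton of $\Gamma$ we have $L(\Core(\Gamma,v))\subseteq L(\Gamma,v)$, so every word of $L(\Core(\Gamma,v))$ has its reduced form in $\oo{L(\Gamma,v)}=H$, i.e.\ $L(\Core(\Gamma,v))\subseteq\sigma^{-1}(H)=L(\Sch(H,A),H)$ (the last equality being the third part). Proposition~\ref{prop: immersion lang hom} then produces a base-point-preserving morphism $\psi\colon\Core(\Gamma,v)\to\Sch(H,A)$; being label-preserving, $\psi$ carries reduced loops at $v$ to reduced loops at the coset $H$, so its image lies inside $\Core(\Sch(H,A),H)$. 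Next I would show $\psi$ is onto $\Core(\Sch(H,A),H)$: any of its edges lies on a reduced loop at $H$ labelled by some reduced $u\in H=\oo{L(\Gamma,v)}$, hence $u$ labels a reduced loop in $\Core(\Gamma,v)$ whose image, by determinism of $\Sch(H,A)$, is precisely the path spelling $u$ from $H$, which uses that edge. The remaining and main point is injectivity of $\psi$: using that $\Core(\Gamma,v)$ is trim (every vertex is the end of a reduced path from $v$ extending to a reduced loop) and that $\Gamma$ is deterministic, from $\psi(t)=\psi(t')$ and reduced paths $v\xrightarrow{r}t$, $v\xrightarrow{r'}t'$ one shows $\oo{r(r')\inv}$ labels a reduced loop at $v$, and reading it through the deterministic graph $\Gamma$ forces $t=t'$. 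I expect this confluence/uniqueness argument --- essentially the uniqueness of the Stallings automaton of $H$ --- to be the hard part; the rest is bookkeeping with labelled Serre graphs.

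For the converse, $\Sch(H,A)$ is by definition the coset graph of $H$: vertices are the right cosets $Hg$, with an edge $Hg\xrightarrow{a}Hga$ for each $a\in\widetilde{A}$. This is deterministic and complete, and involutive (the reverse edge $Hga\xrightarrow{a\inv}Hg$ is present), hence an inverse automaton with base point the coset $H$. A word $w$ labels a loop at $H$ iff $H\sigma(w)=H$ iff $\sigma(w)\in H$, so $L(\Sch(H,A),H)=\sigma^{-1}(H)$; taking reduced forms and using that $\sigma$ restricts to a bijection on reduced words gives $\oo{L(\Sch(H,A),H)}=\oo{\sigma^{-1}(H)}=H$.
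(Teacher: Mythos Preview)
The paper does not actually prove this theorem: it is stated as a summary of the classical Stallings correspondence between subgroups of $F_{A}$ and inverse automata, with the references \cite{BarSil,KaMi02} given just before for details (compare the treatment of Proposition~\ref{prop: immersion lang hom}, whose proof reads ``The proof belongs to the folklore, see for instance \cite{BarSil}''). Your plan is the standard argument one finds in those references --- closure of $L(\Gamma,v)$ under concatenation and involution to get a subgroup, the spanning-tree decomposition of loops to obtain the Schreier generators, the appeal to Proposition~\ref{prop: immersion lang hom} and minimality for the core statement, and the coset-graph description of $\Sch(H,A)$ for the converse --- and it is correct. The only place that genuinely requires care is, as you identify, the injectivity step in showing $\Core(\Gamma,v)=\Core(\Sch(H,A),H)$; this is exactly the uniqueness of the Stallings automaton, and your confluence sketch is the right idea.
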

By this theorem for each inverse automaton $(\Gamma,v)$ we can associate with it a unique subgroup.
For a finitely generated group $H$, $\Core(\Sch(H,A),H)$ can be build via the so called \emph{Stallings construction}. This construction involves an operation, called \emph{Stallings foldings} which is used to transform an involutive graph into an inverse graph. If there are two edges $p\mapright{a}q$, $p\mapright{a}q'$ in an involutive graph $\Delta$ with $q\neq q'$ and $a\in\wt{A}$, then the \emph{folding} is performed by identifying these two edges, as well as the two respective inverse edges, obtaining the inverse graph $\oo{\Delta}$.
If the subgroup $H$ is generated by a finite set $X\subseteq R_A$ of reduced words, the \emph{Stallings automaton} $\mathcal{S}(X)$ is obtained considering the involutive graph $\mathcal{F}(X)$ consisting of all the paths $v\mapright{u}v$, $u\in X$, and then applying all the possible foldings to $\mathcal{F}(X)$ getting $\mathcal{S}(X)=(\oo{\mathcal{F}(X)},v)$. Note that this construction does not depend of the generating set $X$, whence we can write $\mathcal{S}(H)=\mathcal{S}(X)$.
\\
While inverse automata on the alphabet $A$ essentially represent subgroups of $F_{A}$, there is an important tool recently introduced by Silva in \cite{SilvaVirtual} which is a compact way to deal with maps on $F_{A}$: the class of \emph{inverse transducers}. Although in \cite{SilvaVirtual} they are introduced in a more general form, in our context we restrict them to transducers whose associated $\wt{A}\times \wt{A}$-digraphs are involutive, where the involution is given by $(a,b)\mapsto (a^{-1},b^{-1})$. Note that, by \cite[Proposition 3.1]{SilvaVirtual}, given an inverse transducer $\mathrsfs{A} = (Q,\wt{A},\cdot,\circ)$, for any $q\in Q$ the map $\mathrsfs{A}_{q}:\wt{A}^{*}\rightarrow \wt{A}^{*}$ induces a map $\wt{\mathrsfs{A}}_{q}:F_{A}\rightarrow F_{A}$ given by $\wt{\mathrsfs{A}}_{q}(\sigma(u))=\sigma(\mathrsfs{A}_{q}(u))$.
\\
A central, although evident, fact for this paper is given by the following lemma which shows that we can always enrich a reversible transducer with a structure of inverse transducer.
\begin{lemma}\label{lemma: enriching}
Let $\mathrsfs{A}=(Q,A,\cdot,\circ)$ be a reversible (bireversible) transducer, then it can be extended to a reversible (bireversible) transducer $\mathrsfs{A}^{-}=(Q,\wt{A},\cdot,\circ)$ by adding to each edge $q\mapright{a|b}p$ of $\mathrsfs{A}$, the edge $p\vlongfr{a^{-1}}{b^{-1}}q$.
\end{lemma}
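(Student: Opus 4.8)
The plan is to verify directly, by bookkeeping on edges, that adjoining the ``formal inverse'' edges $p\mapright{a^{-1}|b^{-1}}q$ produces a well-defined transducer on the alphabet $\wt{A}$ which is involutive, and then to check that (bi)reversibility is inherited.

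First I would argue that $\mathrsfs{A}^{-}_{\mathcal{I}}$ is deterministic and complete, so that $\mathrsfs{A}^{-}=(Q,\wt{A},\cdot,\circ)$ is genuinely a transducer. For an input letter $a\in A$ the edges of $\mathrsfs{A}^{-}$ out of a state with label $a$ are exactly those of $\mathrsfs{A}$, hence there is precisely one, with a well-defined output in $A$. For a letter $a^{-1}\in A^{-1}$, by construction the edges out of $p$ with label $a^{-1}$ correspond bijectively to the edges of $\mathrsfs{A}$ \emph{into} $p$ with label $a$; since $\mathrsfs{A}$ is reversible, $\delta(-,a)$ is a permutation of $Q$, so there is exactly one such, with output in $A^{-1}$. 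The $A$-labelled and $A^{-1}$-labelled edges cannot clash, so $\mathrsfs{A}^{-}_{\mathcal{I}}$ is deterministic and complete. That $\mathrsfs{A}^{-}$ is involutive for the involution $(a,b)\mapsto(a^{-1},b^{-1})$ is immediate from the construction (the reversal of a new edge is the original edge it came from), so $\mathrsfs{A}^{-}$ is an inverse transducer in the sense of the paper. Reversibility of $\mathrsfs{A}^{-}$ is then clear: $\delta^{-}(-,a)=\delta(-,a)$ for $a\in A$ and $\delta^{-}(-,a^{-1})=\delta(-,a)^{-1}$ for $a^{-1}\in A^{-1}$, and both are permutations of $Q$.

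For the bireversible case I would use that bireversibility of $\mathrsfs{A}$ means $\mathrsfs{A}$ is invertible and $\mathrsfs{A}_{\mathcal{O}}$ is a reversible semiautomaton, which together force that for every state $q$ the $|A|$ edges of $\mathrsfs{A}$ into $q$ carry each output letter of $A$ exactly once. Now the edges of $\mathrsfs{A}^{-}$ out of $q$ split into the $A$-input edges, whose outputs run over all of $A$ by invertibility of $\mathrsfs{A}$, and the $A^{-1}$-input edges, namely the reversals of the in-edges of $q$, whose outputs are the formal inverses of the output letters on those in-edges and hence run over all of $A^{-1}$ exactly once; thus $\lambda^{-}(q,-)$ is a bijection of $\wt{A}$, so $\mathrsfs{A}^{-}$ is invertible. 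Finally, in $\mathrsfs{A}^{-}_{\mathcal{O}}$, ``follow output $b$'' for $b\in A$ is the same permutation of $Q$ as in $\mathrsfs{A}$, while ``follow output $b^{-1}$'' is its inverse permutation; together with invertibility of $\mathrsfs{A}^{-}$ this shows $\mathrsfs{A}^{-}_{\mathcal{O}}$ is a reversible semiautomaton, so $\mathrsfs{A}^{-}$ is bireversible.

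The argument is entirely elementary and I do not anticipate a real obstacle; the only point needing a moment's care is the invertibility of $\mathrsfs{A}^{-}$ in the bireversible case, which relies on the observation that in a bireversible transducer the set of output labels occurring on the in-edges of any fixed state is all of $A$.
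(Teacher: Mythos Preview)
Your proof is correct and follows the same direct edge-by-edge verification as the paper's proof; the only difference is that you spell out in full the bireversible case, which the paper explicitly leaves to the reader. In particular, your observation that in a bireversible transducer the in-edges of any fixed state carry each output letter exactly once is precisely the detail needed to complete that omitted step.
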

\begin{proof}
Since $\mathrsfs{A}$ is reversible we have that $q\mapright{a|b}p$ and $q'\mapright{a|c}p$ implies $q=q'$. This condition ensures that $\mathrsfs{A}^{-}$ is a transducer. The fact that $\mathrsfs{A}^{-}$ is still reversible follows from the determinism of $\mathrsfs{A}$. Indeed, if by absurd we assume that  $q\vvlongfr{a^{-1}}{b^{-1}}p$, $q'\vvlongfr{a^{-1}}{c^{-1}}p$ are edges of $\mathrsfs{A}^{-}$ for some $a,b,c\in A$ and $q\neq q'$, then $p\mapright{a|b}q$, $p\mapright{a|c}q'$ are also edges of $\mathrsfs{A}$, a contradiction. The bireversibility of  $\mathrsfs{A}^{-}$  in case  $\mathrsfs{A}$ is bireversible arises from the the bireversibility of $\mathrsfs{A}$ itself, the details are left to the reader.
\end{proof}
Note that the property of being invertible is not preserved on the alphabet $\wt{A}$ in the passage from $\mathrsfs{A}$ to its associated inverse transducer $\mathrsfs{A}^{-}$. However, if $\mathrsfs{A}$ is bireversible this property is preserved. This operator also commutes with the product of transducers.
\begin{prop}\label{prop: inverse of products}
Let $\mathrsfs{A}$, $\mathrsfs{B}$ be two reversible transducers, then:
$$
\left(\mathrsfs{A}\mathrsfs{B}\right)^{-}=\mathrsfs{A}^{-}\mathrsfs{B}^{-}
$$
In particular, there is a morphism from $\mathcal{S}(\mathrsfs{A}^{-})$ onto $\mathcal{S}(\mathrsfs{A})$.
\end{prop}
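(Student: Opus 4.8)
The plan is to prove the two assertions separately: the identity $\left(\mathrsfs{A}\mathrsfs{B}\right)^{-}=\mathrsfs{A}^{-}\mathrsfs{B}^{-}$ is a direct verification at the level of $\wt{A}\times\wt{A}$-labelled digraphs, and the surjection onto $\mathcal{S}(\mathrsfs{A})$ is then an immediate consequence.

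First I would check that both sides are even defined: by Proposition \ref{prop: product reversible} the product $\mathrsfs{A}\mathrsfs{B}$ is again reversible, so $\left(\mathrsfs{A}\mathrsfs{B}\right)^{-}$ exists by Lemma \ref{lemma: enriching}; and $\mathrsfs{A}^{-},\mathrsfs{B}^{-}$ are transducers over the common alphabet $\wt{A}$, so $\mathrsfs{A}^{-}\mathrsfs{B}^{-}$ is defined. Both machines have vertex set $Q\times T$, where $Q,T$ are the state sets of $\mathrsfs{A},\mathrsfs{B}$, so it remains to exhibit a label- and incidence-preserving bijection of edges. The key observation is that every edge of $\mathrsfs{A}^{-}$ has its input and output labels \emph{simultaneously} in $A$ or \emph{simultaneously} in $A^{-1}$: the former are exactly the edges of $\mathrsfs{A}$, the latter exactly their formal inverses added in Lemma \ref{lemma: enriching} (here reversibility of $\mathrsfs{A}$ is what guarantees determinism of $\mathrsfs{A}^{-}$, so that no spurious edges appear); the same holds for $\mathrsfs{B}^{-}$. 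Hence in a product edge of $\mathrsfs{A}^{-}\mathrsfs{B}^{-}$, obtained by gluing $q\fr{x}{z}p$ in $\mathrsfs{A}^{-}$ with $q'\fr{z}{y}p'$ in $\mathrsfs{B}^{-}$, the intermediate letter $z$ lies in $A$ iff $x,y\in A$, and there are no ``mixed'' edges. If $x,y,z\in A$, the two glued edges are literally edges of $\mathrsfs{A}$ and of $\mathrsfs{B}$ and produce an edge of $\mathrsfs{A}\mathrsfs{B}\subseteq\left(\mathrsfs{A}\mathrsfs{B}\right)^{-}$. If $x=a^{-1},y=b^{-1},z=c^{-1}$ with $a,b,c\in A$, the two glued edges are the formal inverses of $p\fr{a}{c}q$ in $\mathrsfs{A}$ and $p'\fr{c}{b}q'$ in $\mathrsfs{B}$; these two edges match (the output-middle $c$ of the first equals the input-middle $c$ of the second), so they form the edge $(p,p')\fr{a}{b}(q,q')$ of $\mathrsfs{A}\mathrsfs{B}$, whose added inverse $(q,q')\fr{a^{-1}}{b^{-1}}(p,p')$ in $\left(\mathrsfs{A}\mathrsfs{B}\right)^{-}$ is the desired edge. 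Running the same dichotomy in the opposite direction shows that every edge of $\left(\mathrsfs{A}\mathrsfs{B}\right)^{-}$ — whether an original edge of $\mathrsfs{A}\mathrsfs{B}$ (positive intermediate letter) or an added inverse (negative intermediate letter) — arises in exactly one way from such a glued pair, which gives the bijection and hence the equality of machines.

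For the ``in particular'' statement, note that straight from the definition of $\mathrsfs{A}^{-}$ the edges with labels in $A\times A$ form precisely $\mathrsfs{A}$, so each generator $\mathrsfs{A}^{-}_{q}$ of $\mathcal{S}(\mathrsfs{A}^{-})$ restricts on $A^{*}\subseteq\wt{A}^{*}$ to $\mathrsfs{A}_{q}$. Iterating the first part yields $(\mathrsfs{A}^{-})^{k}=(\mathrsfs{A}^{k})^{-}$ for all $k$, so the same restriction identity holds for every state of every power, i.e.\ for every product of the generators; equivalently, ``restriction to $A^{*}$'' is a well-defined semigroup homomorphism sending $\mathcal{S}(\mathrsfs{A}^{-})$ into the semigroup of transformations of $A^{*}$ generated by the $\mathrsfs{A}_{q}$, which is $\mathcal{S}(\mathrsfs{A})$. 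It is onto since generators map to generators, giving the claimed morphism.

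I do not expect a genuine obstacle: the whole content is the clean case analysis (intermediate letter positive versus negative) plus the remark that an edge of $\mathrsfs{A}^{-}$ can never mix a positive label with a negative one, which is exactly the point where reversibility of the factors is used. The one thing to be careful about in writing it up is to treat edges as formal objects (as in Serre's formalism used in the preliminaries) so that the edge bijection — matching original pairs with original pairs and inverse pairs with pairs of inverses — is stated as an actual bijection rather than merely an ``existence of an edge'' equivalence.
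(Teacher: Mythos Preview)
Your proof is correct and follows the same approach as the paper's: a direct edge-level verification that the negatively-labelled edges on both sides correspond via $(q,q')\fr{a^{-1}}{c^{-1}}(p,p')$ in $(\mathrsfs{A}\mathrsfs{B})^{-}$ iff $p\fr{a}{b}q$ in $\mathrsfs{A}$ and $p'\fr{b}{c}q'$ in $\mathrsfs{B}$ (the positive edges coinciding trivially). Your write-up is in fact more thorough, since you make explicit the ``no mixed labels'' observation and you supply an argument for the ``in particular'' clause about the surjection $\mathcal{S}(\mathrsfs{A}^{-})\to\mathcal{S}(\mathrsfs{A})$, which the paper's proof leaves unaddressed.
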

\begin{proof}
This is an easy consequence of the fact that $(q,q')\vvlongfr{a^{-1}}{c^{-1}}(p,p')$ is an edge in $(\mathrsfs{A}\mathrsfs{B})^{-}$ if and only if $q\vvlongfr{a^{-1}}{b^{-1}}p$ in an edge in $\mathrsfs{A}^{-}$ and  $q'\vvlongfr{b^{-1}}{c^{-1}}p'$ is an edge in $\mathrsfs{B}^{-}$, which is equivalent to the fact that $p\mapright{a|b}q$, $p'\mapright{b|c}q'$ are edges in $\mathrsfs{A}$, $\mathrsfs{B}$, respectively.
\end{proof}

\section{A geometric perspective via the enriched dual}\label{sec: dual}
In this section we fix a transducer $\mathrsfs{A} = (Q,A,\cdot,\circ)$. The \emph{dual} is the (well defined) transducer $\partial\mathrsfs{A} = (A,Q,\circ,\cdot)$. It is a quite useful tool already used in \cite{Pic12,StVoVo2011, VoVo2007,VoVo2010}. It is straightforward to check that $\partial(\partial \mathrsfs{A})=\mathrsfs{A}$, and $\partial\mathrsfs{A}$ can be visually obtained by the correspondence
$$
p\longfr{a}{b}q\:\:\Longleftrightarrow\:\: a\longfr{p}{q}b
$$
The coupled-actions associated to $\mathrsfs{A}$ $(Q^{*}\overset{\cdot}{\curvearrowleft} A^{*},Q^{*}\overset{\circ}{\curvearrowright} A^{*})$ can be compactly described by the following two recursive equations:
$$
(q_{1}\ldots q_{n-1}q_{n})\cdot (a_{1}\ldots a_{k})=\left(\left(q_{1}\ldots q_{n-1}\right)\cdot (q_{n}\circ (a_{1}\ldots a_{k}))\right) q_{n}\cdot (a_{1}\ldots a_{k})
$$
$$
(q_{1}\ldots q_{n-1}q_{n})\circ (a_{1}\ldots a_{k})=(q_{1}\ldots q_{n-1})\circ (q_{n}\circ (a_{1}\ldots a_{k}))
$$
Hence, for $\partial\mathrsfs{A}$ the associated coupled-actions is given by $(A^{*}\overset{\circ}{\curvearrowleft} Q^{*},A^{*}\overset{\cdot}{\curvearrowright} Q^{*})$.
\\
The following proposition sums up some relationships between a transducer and its dual, some of them have already been observed in \cite{VoVo2007,VoVo2010}.
\begin{prop}\label{prop: dual prop}
Let $\mathrsfs{A}$ be a transducer, then:
\begin{enumerate}
 \item[i)] $\mathrsfs{A}$ is invertible if and only if $\partial\mathrsfs{A}$ is reversible;
 \item[ii)] $\mathrsfs{A}$ is a $RI$-transducer if and only if $\partial\mathrsfs{A}$ is a $RI$-transducer.
 \item[iii)] $\mathrsfs{A}$ is bireversible if and only if $\partial\mathrsfs{A}$ is bireversible;
\end{enumerate}
\end{prop}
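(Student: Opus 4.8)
The plan is to reduce the whole statement to the edge-level form of the duality, $p\fr{a}{b}q \Longleftrightarrow a\fr{p}{q}b$, by first rephrasing each of the properties ``invertible'', ``reversible'', ``bireversible'' purely in terms of which coordinate projections of the edge set of a transducer are bijective. Regard each edge of a transducer $\mathrsfs{A}=(Q,A,\cdot,\circ)$ as a quadruple (source, input, output, target) in $Q\times A\times A\times Q$. Since a transducer is deterministic and complete, the projection of its edge set onto (source, input) is a bijection onto $Q\times A$; in particular it has $|Q|\cdot|A|$ edges. Against this background I would check the elementary equivalences: $\mathrsfs{A}$ is invertible if and only if the projection onto (source, output) is a bijection onto $Q\times A$ (for a fixed source $p$ this is exactly ``$\lambda(p,-)$ is a permutation of $A$''); $\mathrsfs{A}$ is reversible if and only if the projection onto (input, target) is a bijection onto $A\times Q$ (for a fixed input $a$ this is ``$\delta(-,a)$ is a permutation of $Q$'', where injectivity already gives surjectivity because $Q$ is finite); and $\mathrsfs{A}$ is bireversible if and only if, on top of these two, the projection onto (output, target) is a bijection onto $A\times Q$ --- here the invertibility clause is what makes $\mathrsfs{A}_{\mathcal{O}}$ a semiautomaton (deterministic and complete), and, granted that, the permutation condition defining reversibility of $\mathrsfs{A}_{\mathcal{O}}$ is precisely this last bijectivity.

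Next I would record how these four slots behave under $\partial$. The correspondence $p\fr{a}{b}q\Longleftrightarrow a\fr{p}{q}b$ is a bijection from the edges of $\mathrsfs{A}$ to those of $\partial\mathrsfs{A}$ that sends the quadruple (source, input, output, target) of $\mathrsfs{A}$ to the quadruple (input, source, target, output) of $\partial\mathrsfs{A}$; that is, it swaps source with input and output with target. Therefore a coordinate projection of the edge set of $\mathrsfs{A}$ is a bijection if and only if the projection of $\partial\mathrsfs{A}$ onto the image pair of slots is a bijection, where ``source $\leftrightarrow$ input'' and ``output $\leftrightarrow$ target'' are applied to the pair. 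In particular, the (source, output)-projection of $\mathrsfs{A}$ matches the (input, target)-projection of $\partial\mathrsfs{A}$, the (input, target)-projection of $\mathrsfs{A}$ matches the (source, output)-projection of $\partial\mathrsfs{A}$, and the (output, target)-projection matches the (output, target)-projection.

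Combining the two steps yields all three items. For (i): $\mathrsfs{A}$ is invertible iff its (source, output)-projection is a bijection iff the (input, target)-projection of $\partial\mathrsfs{A}$ is a bijection iff $\partial\mathrsfs{A}$ is reversible. For (ii): being an $RI$-transducer is the conjunction of the (source, output)- and (input, target)-projection conditions, and $\partial$ swaps this pair of conditions with itself, so $\mathrsfs{A}$ is $RI$ iff $\partial\mathrsfs{A}$ is $RI$. For (iii): bireversibility is the conjunction of the (source, output)-, (input, target)- and (output, target)-projection conditions; $\partial$ swaps the first two with each other and fixes the third, so the conjunction is self-dual and $\mathrsfs{A}$ is bireversible iff $\partial\mathrsfs{A}$ is. The step I expect to need the most care is the bireversible equivalence in the first paragraph: one must not forget that reversibility of $\mathrsfs{A}_{\mathcal{O}}$, as it enters the definition of bireversible, presupposes that $\mathrsfs{A}_{\mathcal{O}}$ be a semiautomaton (equivalently, $\mathrsfs{A}$ invertible), and one must repeatedly use finiteness of $Q$ and $A$ to pass from injectivity to bijectivity; everything else is bookkeeping.
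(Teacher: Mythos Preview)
Your proposal is correct and is essentially what the paper does, only far more explicit: the paper's entire proof is the single line ``Follows from the definitions.'' Your edge-quadruple bookkeeping is a clean way to make that sentence precise, and your caution about bireversibility (that reversibility of $\mathrsfs{A}_{\mathcal{O}}$ presupposes $\mathrsfs{A}_{\mathcal{O}}$ being a semiautomaton, hence $\mathrsfs{A}$ invertible) matches the paper's own parenthetical remark in the definition.
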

\begin{proof}
Follows from the definitions.
\end{proof}
In case $\mathrsfs{A}$ is invertible, we can extend $\mathrsfs{A}$ to the disjoint union $\mathrsfs{A}\sqcup \mathrsfs{A}^{-1}$ in the obvious way. This extension is clearly reflected on the coupled-action $(\wt{Q}^{*}\overset{\cdot}{\curvearrowleft} A^{*},\wt{Q}^{*}\overset{\circ}{\curvearrowright} A^{*})$. Note that the action of group $\mathcal{G}(\mathrsfs{A})$ on $A^{*}$ is the same as $\wt{Q}^{*}\overset{\circ}{\curvearrowright}A^{*}$. The following lemma, although very simple, is a key lemma to understand the approach considered in this paper.
\begin{lemma}\label{lem: dual of inverse}
Let $\mathrsfs{A}$ be an inverse automaton, then
$$
\partial\left(\mathrsfs{A}\sqcup \mathrsfs{A}^{-1}\right)=(\partial\mathrsfs{A})^{-}
$$
\end{lemma}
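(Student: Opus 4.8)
The plan is to prove the identity by unravelling the three operations involved --- the dual $\partial$, the inverse $\mathrsfs{A}\inv$, and the enrichment $(\,\cdot\,)^{-}$ of Lemma \ref{lemma: enriching} --- down to the level of labelled edges, and then checking that the two sides are literally the same $\wt{Q}$-labelled digraph on vertex set $A$, where $\wt{Q}=Q\cup Q\inv$. First I would settle the well-definedness points. Since $\mathrsfs{A}$ is invertible, Proposition \ref{prop: dual prop}(i) says $\partial\mathrsfs{A}$ is reversible, so Lemma \ref{lemma: enriching} applies and $(\partial\mathrsfs{A})^{-}$ is a genuine transducer; and invertibility of $\mathrsfs{A}$ also makes $\mathrsfs{A}\sqcup\mathrsfs{A}\inv$ a deterministic and complete transducer over states $\wt{Q}$ and alphabet $A$ --- for a state $p\inv$ and an input $a\in A$ the unique outgoing edge is the one produced by the unique $b\in A$ with $p\circ b=a$ --- so its dual $\partial(\mathrsfs{A}\sqcup\mathrsfs{A}\inv)$ also makes sense. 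Both transducers have state set $A$ (for $(\partial\mathrsfs{A})^{-}$ because enriching does not touch the states $A$ of $\partial\mathrsfs{A}$; for $\partial(\mathrsfs{A}\sqcup\mathrsfs{A}\inv)$ because dualizing turns the input alphabet $A$ into the state set) and alphabet $\wt{Q}$.

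Next I would write down the edges of each side, using the defining correspondence $p\fr{a}{b}q$ in $\mathrsfs{A}$ exactly when $a\fr{p}{q}b$ in $\partial\mathrsfs{A}$. In $\partial(\mathrsfs{A}\sqcup\mathrsfs{A}\inv)$ the edges split according to whether the alphabet letter lies in $Q$ or in $Q\inv$, because $\mathrsfs{A}\sqcup\mathrsfs{A}\inv$ is a disjoint union: an edge $a\fr{p}{q}b$ with $p,q\in Q$ is present exactly when $p\fr{a}{b}q$ is an edge of $\mathrsfs{A}$, and an edge $a\fr{q\inv}{p\inv}b$ with $q\inv,p\inv\in Q\inv$ is present exactly when $q\inv\fr{a}{b}p\inv$ is an edge of $\mathrsfs{A}\inv$, i.e. (by the definition of $\mathrsfs{A}\inv$) exactly when $q\fr{b}{a}p$ is an edge of $\mathrsfs{A}$. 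On the other side, by Lemma \ref{lemma: enriching} the edges of $(\partial\mathrsfs{A})^{-}$ are the edges $a\fr{p}{q}b$ of $\partial\mathrsfs{A}$ together with the added edges $b\fr{p\inv}{q\inv}a$, one for each; and $a\fr{p}{q}b$ is an edge of $\partial\mathrsfs{A}$ exactly when $p\fr{a}{b}q$ is an edge of $\mathrsfs{A}$.

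Finally I would match the two descriptions. The $Q$-labelled parts coincide verbatim: on both sides $a\fr{p}{q}b$ occurs exactly when $p\fr{a}{b}q$ is an edge of $\mathrsfs{A}$. For the $Q\inv$-labelled parts, write a generic edge of $\mathrsfs{A}$ as $s\fr{c}{d}t$; then the enrichment of $\partial\mathrsfs{A}$ contributes the edge $d\fr{s\inv}{t\inv}c$, whereas the $\mathrsfs{A}\inv$-summand contributes, via the condition $q\fr{b}{a}p$ with $q=s,\ b=c,\ a=d,\ p=t$, the edge $d\fr{s\inv}{t\inv}c$ as well --- the same edge. Hence the edge sets agree, and since a transducer is determined by its states, alphabet and labelled edges, we conclude $\partial(\mathrsfs{A}\sqcup\mathrsfs{A}\inv)=(\partial\mathrsfs{A})^{-}$. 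I do not expect any real difficulty here: the argument is pure bookkeeping. The one place to be careful is to keep straight the two different ways an edge gets turned around --- once when passing from $\mathrsfs{A}$ to $\mathrsfs{A}\inv$, which exchanges the input and output letters, and once when the enrichment adds the formal-inverse edge travelling in the reverse direction --- and, at the outset, to verify that $\mathrsfs{A}\sqcup\mathrsfs{A}\inv$ is indeed a transducer so that forming its dual is legitimate.
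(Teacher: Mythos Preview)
Your proposal is correct and follows essentially the same approach as the paper's own proof: both arguments reduce to matching edges, using the equivalences $p\fr{a}{b}q$ in $\mathrsfs{A}$ $\Leftrightarrow$ $p\inv\fr{b}{a}q\inv$ in $\mathrsfs{A}\inv$ $\Leftrightarrow$ $a\fr{p}{q}b$ and $b\fr{p\inv}{q\inv}a$ in $(\partial\mathrsfs{A})^{-}$. Your version is simply more explicit about well-definedness (invoking Proposition~\ref{prop: dual prop}(i) and Lemma~\ref{lemma: enriching}, and checking that $\mathrsfs{A}\sqcup\mathrsfs{A}\inv$ is a transducer), whereas the paper compresses the whole thing into a single chain of ``if and only if''s.
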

\begin{proof}
By i) of Proposition \ref{prop: dual prop} $\partial\mathrsfs{A}$ is reversible. Moreover, in $\mathrsfs{A}$ there is an edge $p\mapright{a|b}q$ if and only if $p^{-1}\mapright{b|a}q^{-1}$ in $\mathrsfs{A}^{-1}$ if and only if $a\mapright{p|q}b$ and $b\vlongfr{p^{-1}}{q^{-1}}a$ are edges in $(\partial\mathrsfs{A})^{-}$.
\end{proof}
By enriching $\partial\mathrsfs{A}$ with the a structure of inverse transducer allows us to have a powerful tool to  geometrically encode algebraic and topological properties of the group $\mathcal{G}(\mathrsfs{A})$. For this reason we name $(\partial\mathrsfs{A})^{-}$ as the \emph{enriched dual} of $\mathrsfs{A}$.
We have the following characterization of the relations defining the group $\mathcal{G}(\mathrsfs{A})$.
\begin{theorem}\label{theo: charact relations}
Let $\mathrsfs{A}=(Q,A,\cdot,\circ)$ be an invertible transducer, with $\mathcal{G}(\mathrsfs{A})\simeq F_{Q}/N$. Consider the transducer $(\partial\mathrsfs{A})^{-}=(A,\wt{Q},\circ,\cdot)$, and let
\begin{equation}\label{eq: containing relation}
\mathcal{N}\subseteq \bigcap_{a\in A}L\left((\partial\mathrsfs{A})^{-},a\right)
\end{equation}
be the maximal subset invariant for the action $A\overset{\cdot}{\curvearrowright} \wt{Q}^{*}$.Then $N=\oo{\mathcal{N}}$.
\end{theorem}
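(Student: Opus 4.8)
The plan is to pin down the set $\mathcal{N}$ completely, by showing $\mathcal{N}=\sigma^{-1}(N)$, where $\sigma:\wt{Q}^{*}\to F_{Q}$ is the canonical epimorphism and we identify $\mathcal{G}(\mathrsfs{A})=F_{Q}/N$ through $w\mapsto\mathrsfs{A}_{w}$; once this is established, passing to reduced words gives $\oo{\mathcal{N}}=\sigma^{-1}(N)\cap R_{Q}=N$ at once. Two geometric translations are needed: one reading loops of $(\partial\mathrsfs{A})^{-}$ as stabilizers, the other reading the action $A\overset{\cdot}{\curvearrowright}\wt{Q}^{*}$ as passage to restrictions.

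First I would describe $(\partial\mathrsfs{A})^{-}$ explicitly. Since $\mathrsfs{A}$ is invertible, $\partial\mathrsfs{A}$ is reversible by Proposition~\ref{prop: dual prop}, so by Lemma~\ref{lemma: enriching} the $\wt{Q}$-graph $(\partial\mathrsfs{A})^{-}_{\mathcal{I}}$ is complete, deterministic and involutive, i.e.\ an inverse graph. By Lemma~\ref{lem: dual of inverse} we have $(\partial\mathrsfs{A})^{-}=\partial(\mathrsfs{A}\sqcup\mathrsfs{A}^{-1})$, and unwinding the edge correspondence shows that the edges of $(\partial\mathrsfs{A})^{-}_{\mathcal{I}}$ are exactly $a\path{r}\mathrsfs{A}_{r}(a)$ for $a\in A$ and $r\in\wt{Q}$; in other words the connected component of a vertex $a$ is the orbital graph of the action of $\mathcal{G}(\mathrsfs{A})$ on $A$ through $a$. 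By completeness and determinism each $u\in\wt{Q}^{*}$ labels a unique path from $a$, ending at $\mathrsfs{A}_{u}(a)$, so
$$
L\bigl((\partial\mathrsfs{A})^{-},a\bigr)=\{\,u\in\wt{Q}^{*}:\mathrsfs{A}_{u}(a)=a\,\},\qquad
S:=\bigcap_{a\in A}L\bigl((\partial\mathrsfs{A})^{-},a\bigr)=\{\,u\in\wt{Q}^{*}:\mathrsfs{A}_{u}\text{ fixes the first level}\,\}.
$$
Because $\mathrsfs{A}_{u}$ depends only on $\sigma(u)$, the set $S$ is a union of $\sigma$-classes; moreover $N\subseteq S$ (a trivial element fixes the whole tree) and by Theorem~\ref{theo: inverse-groups} $\oo{S}$ is a subgroup of $F_{Q}$, which is consistent with~\eqref{eq: containing relation}.

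Next I would show that $A\overset{\cdot}{\curvearrowright}\wt{Q}^{*}$ computes restrictions in $\mathcal{G}(\mathrsfs{A})$: for $a\in A$ and $u\in\wt{Q}^{*}$ the word $a\cdot u$ represents the restriction $\mathrsfs{A}_{u}|_{a}$ of the tree endomorphism $\mathrsfs{A}_{u}$ to the subtree at $a$ (identified with $A^{*}$), that is $\mathrsfs{A}_{a\cdot u}=\mathrsfs{A}_{u}|_{a}$; iterating, $\mathrsfs{A}_{v\cdot u}=\mathrsfs{A}_{u}|_{v}$ for every $v\in A^{*}$. This is proved by induction on $|u|$ from the recursive equations defining the coupled actions of $(\partial\mathrsfs{A})^{-}$ (equivalently, of $\mathrsfs{A}\sqcup\mathrsfs{A}^{-1}$), and is the $\wt{Q}$-alphabet dual of the self-similarity identity $\mathrsfs{A}_{w}|_{a}=\mathrsfs{A}_{w\cdot a}$ combined with the rule for restricting a composite. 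In particular $\sigma(v\cdot u)$ depends only on $\sigma(u)$, so $\sigma^{-1}(N)$ is closed under $A\overset{\cdot}{\curvearrowright}\wt{Q}^{*}$, and $\sigma^{-1}(N)\subseteq S$.

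Now $\mathcal{N}$, being the largest subset of $S$ closed under the action, equals $\{\,u\in\wt{Q}^{*}:v\cdot u\in S\text{ for all }v\in A^{*}\,\}$. By the two translations, the condition ``$v\cdot u\in S$ for all $v$'' says that every restriction $\mathrsfs{A}_{u}|_{v}$ fixes the first level; and an endomorphism of $A^{*}$ is trivial precisely when all of its restrictions fix the first level. Hence $u\in\mathcal{N}$ iff $\mathrsfs{A}_{u}=1$, i.e.\ iff $\sigma(u)\in N$, so $\mathcal{N}=\sigma^{-1}(N)$; since $\sigma$ is the identity on $R_{Q}$ and $N\subseteq R_{Q}$, this gives $\oo{\mathcal{N}}=N$. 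The delicate point is the second translation: one has to track carefully the direction of the duality $\partial$, the assignment of the two label-components of $(\partial\mathrsfs{A})^{-}$ as input and output, and the formal inverses inherited from $\mathrsfs{A}^{-1}$, and then check that the transducer recursion for $a\cdot u$ matches the composition rule for restrictions; the remaining bookkeeping with free-group reduction is routine because $\mathrsfs{A}_{u}$ already factors through $\sigma$.
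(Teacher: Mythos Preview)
Your proposal is correct and follows essentially the same route as the paper: both arguments establish $\mathcal{N}=\sigma^{-1}(N)$ and then pass to reduced words. The only difference is presentational---you phrase the two key steps in the language of first-level stabilizers and sections (restrictions) of tree automorphisms, while the paper carries out the equivalent computations directly with the coupled actions, proving $u\circ(a_{1}\ldots a_{k})=a_{1}\ldots a_{k}$ by induction on $k$ using the invariance of $\mathcal{N}$; your observation that ``all restrictions fix the first level'' characterizes the identity is exactly what that induction encodes.
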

\begin{proof}
Let $\mathcal{N}'=\sigma^{-1}(N)$, then for any $u\in\mathcal{N}'$ we have
\begin{eqnarray}
\label{eq: relation} u\circ (a_{1}\ldots a_{n})=a_{1}\ldots a_{n}\\
\label{eq: relation2} \left[u\cdot(a_{1}\ldots a_{n})\right]\circ (b_{1}\ldots b_{m})=b_{1}\ldots b_{m}
\end{eqnarray}
for any $a_{1}\ldots a_{n}, b_{1},\ldots, b_{m}\in A^{*}$ holds in $\mathrsfs{A}\sqcup \mathrsfs{A}^{-1}=(\wt{Q},A,\cdot, \circ)$. In particular, equations (\ref{eq: relation}) (\ref{eq: relation2}) hold for single elements $a_{1},b_{1}$, hence by Lemma \ref{lem: dual of inverse}, $u\in \cap_{a\in A}L\left((\partial\mathrsfs{A})^{-},a\right)$, and by (\ref{eq: relation2}) $a_{1}\cdot u\in \mathcal{N}'$ which shows that $\mathcal{N}'$ is stable for  $A\overset{\cdot}{\curvearrowright} \wt{Q}^{*}$. Whence by maximality $\mathcal{N}'\subseteq \mathcal{N}$.
Conversely, assume that (\ref{eq: containing relation}) holds, and let us prove by induction on $k$ that $u\circ (a_{1}\ldots a_{k})=a_{1}\ldots a_{k}$. Note that $\mathcal{N}$ is stable for $A\overset{\cdot}{\curvearrowright} \wt{Q}^{*}$ if and only if it is stable for $A^{*}\overset{\cdot}{\curvearrowright} \wt{Q}^{*}$. Since $u\in\cap_{a\in A}L\left((\partial\mathrsfs{A})^{-},a\right)$, then for any $a\in A$ the induction base $u\circ a=a$ clearly holds. Assume now that $u\circ (a_{1}\ldots, a_{k-1})=a_{1}\ldots, a_{k-1}$ hence
$$
u\circ (a_{1}\ldots a_{k})=a_{1}\ldots a_{k-1}\left [u\cdot (a_{1}\ldots a_{k-1})\right] \circ a_{k}=a_{1}\ldots a_{k}
$$
where in the last passage we have used the stability of $ \mathcal{N}$ for the action $A^{*}\overset{\cdot}{\curvearrowright} \wt{Q}^{*}$, i.e. $(a_{1}\ldots a_{k-1})\cdot u\in \mathcal{N}$.
\end{proof}
By $\mathcal{C}^{+}(G,A)$ we denote the \emph{the Cayley graph} of the group $G$ with generating set $A$, while we denote by $\mathcal{C}(G,A)$ the \emph{Cayley automaton} of the group $G$ with generating set $A\cup A^{-1}$ and base point the identity of $G$. Note that if $G=F_{A}/H$, then $L(\mathcal{C}(G,A))=\sigma^{-1}(H)$. The following theorem gives a way to represent the Schreier automata from the powers of the enriched dual as well as a way to build (in the limit) the Cayley automaton of the group $\mathcal{G}(\mathrsfs{A})$.
\begin{theorem}\label{thm:schreier}
Let $G=\mathcal{G}(\mathrsfs{A})=F_{Q}/N$. For any $k\ge 1$ put $\mathcal{D}_{k}=((\partial\mathrsfs{A})^{-})^{k}_{\mathcal{I}}$. The following facts hold.
\begin{enumerate}
\item[i)] If $v=a_{1}\ldots a_{k}\in A^{k}$, and $H=\St_{G}(v)$. Then
$$
\left(\mathcal{D}_{k}, v\right)\simeq (\Sch(H,A),H)
$$
\item[ii)] If $\ul{v}=a_{1}a_{2}\ldots\in A^{\omega}$, and $H=\St_{G}(v)$, then $\varprojlim \{\mathcal{D}^{k}_{\mathcal{I}}\}_{k\ge 1}=\mathcal{D}_{\mathcal{I}}^{\infty}$ is an inverse graph such that
$$
(\mathcal{D}_{\mathcal{I}}^{\infty}, \ul{v})\simeq  (\Sch(H,A),H)
$$
\item[iii)] Let $(\mathcal{N}_{k},v_{k})=\prod_{v\in A^{k}} (\mathcal{D}_{k},v)$. Then $\mathcal{C}(G/\St_{G}(k),Q)\simeq (\mathcal{N}_{k},v_{k})$.
Furthermore, the inclusions $L(\mathcal{N}_{k},v_{k})\subseteq L(\mathcal{N}_{k-1},v_{k-1})$ induces maps $\psi_{k,k-1}:(\mathcal{N}_{k},v_{k})\rf (\mathcal{N}_{k-1},v_{k-1})$ giving rise to the inverse system $(\{(\mathcal{N}_{k},v_{k})\}_{k\ge 1}, \psi_{i,j})$ such that if $\mathcal{N}^{\infty}=\varprojlim \{(\mathcal{N}_{k},v_{k})\}_{k\ge 1}$, then
$$
\mathcal{C}(G,Q)\simeq \mathcal{N}^{\infty}
$$
\end{enumerate}
\end{theorem}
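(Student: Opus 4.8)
\bigskip

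\noindent\textbf{Proof proposal.} The plan is to prove i) by making the combinatorics of the enriched dual explicit, and then to obtain ii) and iii) from i) together with standard facts on projective limits and on products of automata. The unifying remark is that $(\partial\mathrsfs{A})^{-}$ is a reversible inverse transducer (Lemma~\ref{lemma: enriching} and i) of Proposition~\ref{prop: dual prop}), that $((\partial\mathrsfs{A})^{-})^{k}=((\partial\mathrsfs{A})^{k})^{-}$ is again a complete deterministic inverse transducer by Propositions~\ref{prop: inverse of products} and~\ref{prop: product reversible}, and hence that each $\mathcal{D}_{k}$ is a complete deterministic inverse $\wt{Q}$-graph and each pointed component $(\mathcal{D}_{k},v)$ a connected inverse automaton. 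As finite products and inverse limits of inverse $\wt{Q}$-graphs are again inverse $\wt{Q}$-graphs, all the graphs occurring in the statement are connected inverse automata, so by Proposition~\ref{prop: immersion lang hom} it will suffice in each part to identify a loop language and to match it with that of the relevant Schreier or Cayley automaton.

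For i), the one genuinely computational step is the description of the edges of $\mathcal{D}_{k}$. From Lemma~\ref{lem: dual of inverse} we have $(\partial\mathrsfs{A})^{-}=\partial(\mathrsfs{A}\sqcup\mathrsfs{A}^{-1})$, so that an edge $a\longfr{q}{q'}b$ of $(\partial\mathrsfs{A})^{-}$ records the edge $q\longfr{a}{b}q'$ of $\mathrsfs{A}\sqcup\mathrsfs{A}^{-1}$; combining this with the definition of the product of transducers and the recursion for $\wt{Q}^{*}\overset{\circ}{\curvearrowright}A^{*}$, an induction on $k$ shows that $\mathcal{D}_{k}$ has an edge $(a_{1}\ldots a_{k})\mapright{q}(b_{1}\ldots b_{k})$ with $q\in\wt{Q}$ precisely when $b_{1}\ldots b_{k}=q\circ(a_{1}\ldots a_{k})$. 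Reading off the cumulative effect along a path, the loop language of $(\mathcal{D}_{k},v)$ at $v=a_{1}\ldots a_{k}$ consists exactly of the words $w\in\wt{Q}^{*}$ whose image in $G$ fixes the point $v$; but this is precisely the loop language of the orbital graph of $v$ for the action of $\mathcal{G}(\mathrsfs{A})$ on $A^{k}$, i.e.\ of $(\Sch(H,Q),H)$ with $H=\St_{G}(v)$ (by Theorem~\ref{theo: inverse-groups} this is the inverse automaton attached to the subgroup $H$, and $N$ lies in the preimage of $H$ in $F_{Q}$). Hence $(\mathcal{D}_{k},v)\simeq(\Sch(H,Q),H)$. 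The delicate point of the whole theorem is exactly this identification of loop languages: it requires careful bookkeeping of the input/output/state roles through the $k$-fold product --- in particular of the order in which the letters of a path act on $v$ --- matched to the convention used to define the Schreier (orbital) graph.

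For ii), Proposition~\ref{prop: projective} applied to the system $\{\mathcal{D}_{k}\}$ gives $L(\mathcal{D}_{\mathcal{I}}^{\infty},\ul{v})=\bigcap_{k>0}L(\mathcal{D}_{k},\ul{v}[k])$, and $\mathcal{D}_{\mathcal{I}}^{\infty}$ is an inverse graph since determinism and the involution of a $\wt{Q}$-graph are inherited by an inverse limit. By i), $L(\mathcal{D}_{k},\ul{v}[k])$ is the loop language of the Schreier automaton of $\St_{G}(\ul{v}[k])$; since an element of $G$ fixes $\ul{v}$ if and only if it fixes every prefix $\ul{v}[k]$, the intersection over $k$ is the loop language of $(\Sch(H,Q),H)$ with $H=\St_{G}(\ul{v})$. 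Proposition~\ref{prop: immersion lang hom} then gives $(\mathcal{D}_{\mathcal{I}}^{\infty},\ul{v})\simeq(\Sch(H,Q),H)$.

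For iii), $(\mathcal{N}_{k},v_{k})=\prod_{v\in A^{k}}(\mathcal{D}_{k},v)$ recognizes $\bigcap_{v\in A^{k}}L(\mathcal{D}_{k},v)$, which by i) is the set of $w\in\wt{Q}^{*}$ whose image in $G$ fixes every $v\in A^{k}$, i.e.\ lies in $\St_{G}(k)$; since $\St_{G}(k)\trianglelefteq G$, this is exactly the loop language of $\mathcal{C}(G/\St_{G}(k),Q)$, whence $\mathcal{C}(G/\St_{G}(k),Q)\simeq(\mathcal{N}_{k},v_{k})$ by Proposition~\ref{prop: immersion lang hom}. The inclusions $L(\mathcal{N}_{k},v_{k})\subseteq L(\mathcal{N}_{k-1},v_{k-1})$ hold because $\St_{G}(k)\subseteq\St_{G}(k-1)$, so by Proposition~\ref{prop: immersion lang hom} they induce the morphisms $\psi_{k,k-1}$ and hence an inverse system; its limit $\mathcal{N}^{\infty}$ is an inverse graph, and the projective-limit argument of Proposition~\ref{prop: projective} shows its loop language is $\bigcap_{k}L(\mathcal{N}_{k},v_{k})$, i.e.\ the set of $w$ whose image in $G$ lies in $\bigcap_{k}\St_{G}(k)$. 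Since the action of $\mathcal{G}(\mathrsfs{A})$ on $A^{*}$ is faithful, $\bigcap_{k}\St_{G}(k)=\{1\}$, so this loop language is that of $\mathcal{C}(G,Q)$, and Proposition~\ref{prop: immersion lang hom} yields $\mathcal{C}(G,Q)\simeq\mathcal{N}^{\infty}$. Granting i), the arguments for ii) and iii) are thus purely formal consequences of Proposition~\ref{prop: projective}, the fact that products recognize intersections, faithfulness, and Proposition~\ref{prop: immersion lang hom}.
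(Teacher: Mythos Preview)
Your proposal is correct and follows essentially the same route as the paper: in each part you reduce the claimed isomorphism to an equality of loop languages and then invoke Proposition~\ref{prop: immersion lang hom}, exactly as the paper does. Your treatment is somewhat more explicit than the paper's --- you spell out why each $\mathcal{D}_{k}$ is a complete inverse $\wt{Q}$-graph, give the concrete edge description $(a_{1}\ldots a_{k})\mapright{q}(q\circ(a_{1}\ldots a_{k}))$ in i), and in iii) make the use of faithfulness ($\bigcap_{k}\St_{G}(k)=\{1\}$) explicit where the paper leaves it implicit in the identity $L(\mathcal{N}^{\infty})=\sigma^{-1}(N)$ --- but these are elaborations of the same argument rather than a different one. (Incidentally, your silent correction of $\Sch(H,A)$ to $\Sch(H,Q)$ matches the paper's own conventions elsewhere.)
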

\begin{proof}
i) By Proposition \ref{prop: immersion lang hom} it is enough to prove that $L(\mathcal{D}_{k}, v)=L(\Sch(H,A),H)$. Let $u\in L(\mathcal{D}_{k}, v)$, then $u\circ v=v$, and so $\oo{u}\in H$, whence $u\in L(\Sch(H,A),H)$. Conversely, if $u\in L(\Sch(H,A),H)$, then $\oo{u}\in H$, whence $u\circ v=v$, i.e. $u\in L(\mathcal{D}_{k},v)$.\\
ii) As before, we have to prove that $L(\mathcal{D}_{\mathcal{I}}^{\infty}, \ul{v})=  L(\Sch(H,A),H)$. Given $\ul{v}\in A^{\omega}$, we denote by $H_k=\St_G(\ul{v}[k])$. We have
\begin{eqnarray*}
u\in  L(\mathcal{D}_{\mathcal{I}}^{\infty}, \ul{v}) &\Leftrightarrow & \forall \ k\geq 1, u\in  L(\mathcal{D}_{\mathcal{I}}^{\infty}, \ul{v}[k])\\
 & \Leftrightarrow & u\in L(\Sch(H_k,A),H_k) \\
 & \Leftrightarrow & u \in L(\Sch(H,A),H).\\
\end{eqnarray*}
The last equivalence follows from the fact that if $u \not\in L(\Sch(H,A),H)$, then $uH\neq H$. This implies there is $h\in H$ such that $uh\not\in H$. In particular, since $H=\cap H_k$ there exists $k$ with $uh\not\in H_k$. Hence $uH_k\neq H_k$. On the other hand. if $u\in L(\Sch(H,A),H)$ it follows $u\in L(\Sch(H_k,A),H_k)$ for any $k$.\\
iii) The first stamens follows by showing $L(\mathcal{N}_{k},v_{k})=\sigma^{-1}(\St_{G}(k))$ and Proposition \ref{prop: immersion lang hom}. From the definition of product of automata we have $u\in L(\prod_{v\in A^{k}} (\mathcal{D}_{k},v))$ if and only if $v\circ u=v$ for every $v\in A^k$ which is equivalent to $u\in \sigma^{-1}(\St_G(k))$.\\
Let us show that the map $\psi_{i,j}$ is well defined for $i\geq j$. This follows from the fact that if $u\in \sigma^{-1}(\St_G(i))$ then $u\circ v=v$ for every $v=v_1\cdots v_j\cdots v_i$. In particular $u\circ (v_1\cdots v_j) =v_1\cdots v_j$ and this implies $u\in \sigma^{-1}(\St_G(j))$.\\
The isomorphism in the statement follows from Proposition \ref{prop: immersion lang hom} and $L(\mathcal{N}^{\infty})=\sigma^{-1}(N)$. Indeed, we get
 \begin{eqnarray*}
u\in  \sigma^{-1}(N) & \Leftrightarrow & \forall \ k\geq 1, \ \forall v\in A^k  \  u\in L(\mathcal{D}_{k}, v)\\
 &\Leftrightarrow & \forall \ k\geq 1 \  u\in L(\mathcal{N}_{k},v_{k}) \\
  &\Leftrightarrow &   u\in \bigcap_{k\ge 1}L(\mathcal{N}_{k},v_{k})=L(\mathcal{N}^{\infty})
\end{eqnarray*}
where the last equality follows from the usual properties of projective limit.
\end{proof}
Note that by the previous theorem $\Core(\mathcal{N}^{\infty})$ is not complete if and only if $\mathcal{G}(\mathrsfs{A})$ is free.
We now state the analogous of Theorem \ref{theo: charact relations} for the semigroup $\mathcal{S}(\mathrsfs{A})$ defined by the transducer $\mathrsfs{A}=(Q,A,\cdot,\circ)$. First we need the following definition. We say that a pair of words $u,v\in Q^{*}$ are \emph{colliding} in $\partial\mathrsfs{A}=(A,Q,\circ,\cdot)$ whenever $a\circ u=a\circ v$ for all $a\in A$. A set $\mathcal{R}\subseteq Q^{*}\times Q^{*}$ is said to be invariant for the action $A\overset{\cdot}{\curvearrowright} Q^{*}$ whenever $a\cdot (u,v)=(a\cdot u, a\cdot v)\in \mathcal{R}$ for all $(u,v)\in \mathcal{R}$, $a\in A$.
\begin{theorem}\label{theo: relations semigroup}
Let $\mathrsfs{A}=(Q,A,\cdot,\circ)$ be an inverse automaton, with $\mathcal{S}(\mathrsfs{A})\simeq Q^{*}/\mathcal{R}$. Consider the automaton $\partial\mathrsfs{A}=(A,Q,\circ,\cdot)$. Then, $\mathcal{R}$ is the maximal subset of $Q^{*}\times Q^{*}$ of colliding pairs invariant for the action $A\overset{\cdot}{\curvearrowright} Q^{*}$.
\end{theorem}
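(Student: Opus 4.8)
The plan is to mimic, one level down, the proof of Theorem~\ref{theo: charact relations}: there $\mathcal{G}(\mathrsfs{A})=F_{Q}/N$ and $N$ was recovered as the largest ``level-$1$ fixing, section-stable'' subset of $\wt{Q}^{*}$; here $\mathcal{S}(\mathrsfs{A})=Q^{*}/\mathcal{R}$ and the task is to recover $\mathcal{R}$ as the largest ``level-$1$ colliding, section-stable'' relation. First I would recall that, by definition of $\mathcal{S}(\mathrsfs{A})$, the relation $\mathcal{R}$ is the kernel congruence of the canonical projection $Q^{*}\twoheadrightarrow\mathcal{S}(\mathrsfs{A})$, $u\mapsto\mathrsfs{A}_{u}$; that is, $(u,v)\in\mathcal{R}$ if and only if $u\circ w=v\circ w$ for every $w\in A^{*}$. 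So the statement splits into: (a) $\mathcal{R}$ is a set of colliding pairs invariant for $A\overset{\cdot}{\curvearrowright}Q^{*}$; and (b) $\mathcal{R}$ contains every such set.

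For (a) I would fix $(u,v)\in\mathcal{R}$. Taking $w=a\in A$ gives $u\circ a=v\circ a$, and, translated through the edge correspondence $p\fr{a}{b}q\Leftrightarrow a\fr{p}{q}b$ extended to paths (as in Lemma~\ref{lem: dual of inverse} and Theorem~\ref{thm:schreier}), this is precisely the equality $a\circ u=a\circ v$ in $\partial\mathrsfs{A}$, so $(u,v)$ is colliding. For invariance I would use the wreath-recursion identity $u\circ(aw)=(u\circ a)\,\big((u\cdot a)\circ w\big)$ (valid for all $u\in Q^{*}$, $a\in A$, $w\in A^{*}$, and for single states just the defining recursion of $\circ$), together with the fact that the section word $u\cdot a$ of $\mathrsfs{A}_{u}$ at $a$ is read off from the output produced by feeding $u$ to $\partial\mathrsfs{A}$ from the state $a$, i.e. from $a\cdot u$. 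Since $u\circ a=v\circ a$ and $u\circ(aw)=v\circ(aw)$ for every $w$, cancelling the common first letter gives $(u\cdot a)\circ w=(v\cdot a)\circ w$ for all $w$, hence $a\cdot(u,v)=(a\cdot u,a\cdot v)\in\mathcal{R}$, which is invariance.

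For (b), let $\mathcal{T}$ be any set of colliding pairs invariant for $A\overset{\cdot}{\curvearrowright}Q^{*}$, fix $(u,v)\in\mathcal{T}$, and prove by induction on $|w|$ that $u\circ w=v\circ w$. The cases $|w|\le1$ are the colliding hypothesis (transported back from $\partial\mathrsfs{A}$). For $w=aw'$ the wreath identity writes $u\circ w=(u\circ a)\big((u\cdot a)\circ w'\big)$ and similarly for $v$; the first letters coincide by the base case, and $(a\cdot u,a\cdot v)\in\mathcal{T}$ by invariance, so the inductive hypothesis applied to this pair gives $(u\cdot a)\circ w'=(v\cdot a)\circ w'$. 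Hence $\mathrsfs{A}_{u}=\mathrsfs{A}_{v}$, i.e. $(u,v)\in\mathcal{R}$, so $\mathcal{T}\subseteq\mathcal{R}$. Together with (a) this exhibits $\mathcal{R}$ as the maximal set of colliding pairs invariant for $A\overset{\cdot}{\curvearrowright}Q^{*}$.

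The only genuinely delicate point — routine, but the one to be careful with, and the same subtlety as in Theorem~\ref{theo: charact relations} — is keeping the orientations of the two coupled actions straight: one must verify that the section word $u\cdot a$ of $\mathrsfs{A}_{u}$ at the letter $a$ matches the $\partial\mathrsfs{A}$-output $a\cdot u$, and that ``colliding'' is exactly the $|w|=1$ instance of $u\circ w=v\circ w$ after passing through $\partial$. With that dictionary and the wreath recursion available, both inclusions (a) and (b) become immediate.
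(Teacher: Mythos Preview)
Your proposal is correct and follows essentially the same approach as the paper. The paper's proof is extremely terse --- it simply states the key observation that $u=v$ in $\mathcal{S}(\mathrsfs{A})$ is equivalent to the pair of conditions $a\circ u=a\circ v$ and $a\circ(u\cdot b)=a\circ(v\cdot b)$ for all $a,b\in A$ in $\partial\mathrsfs{A}$ (i.e.\ colliding plus section-stability), and then defers to the argument of Theorem~\ref{theo: charact relations}; you spell out exactly that deferred argument, proving the two inclusions (a) and (b) explicitly with the wreath-recursion induction, and you correctly flag the one notational subtlety (matching $u\cdot a$ with the dual output $a\cdot u$, and ``colliding'' with the $|w|=1$ case of $\mathrsfs{A}_{u}=\mathrsfs{A}_{v}$).
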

\begin{proof}
In the same spirit of the proof of Theorem \ref{theo: inverse-groups} the statement follows from the observation that $u=v$ in $\mathcal{S}(\mathrsfs{A})$ if and only if
\begin{eqnarray}
\nonumber a\circ u=a\circ v\\
\nonumber a\circ\left[u\cdot b\right]= a\circ \left[v\cdot b\right]
\end{eqnarray}
holds for any $a,b\in A$ in $\partial\mathrsfs{A}$.
\end{proof}

\section{Connected components of powers of transducers}\label{sec: finiteness}
In this section we study finiteness conditions. In particular we show that the finiteness of the semigroup $\mathcal{S}(\mathrsfs{A})$ is equivalent to the finiteness of $\mathcal{S}(\partial\mathrsfs{A})$. We also show, by using the notion of supersymmetric transducers, a relationship between the geometry of the the powers of the dual automata $\partial\mathrsfs{A}$ and the finiteness of the group generated by $\mathrsfs{A}$. More precisely, transducers whose powers of the dual keep to be symmetric and, in some sense, homogeneous, generate only a finite number of actions of the original group.
\\
We now study how the connected components of two inverse transducers behaves when we compose them.  Let $\mathrsfs{A}=(Q,A,\cdot,\circ)$ be a transducer, note that for any $q\in Q^{k}$ and $u\in A^{m}$ the automata $\left (\mathrsfs{A}^{k}_{\mathcal{I}},q \right)$ and $\left ((\partial\mathrsfs{A})^{m}_{\mathcal{I}},u \right)$ represent the graph of the orbits of $q,u$ with respect to the the actions $Q^{*}\overset{\cdot}{\curvearrowleft} A^{*}$, $A^{*}\overset{\circ}{\curvearrowleft}Q^{*}$, respectively. We have the following proposition.
\begin{prop}\label{prop: finiteness}
Let $\mathrsfs{A}=(Q,A,\cdot,\circ)$ be a transducer. Then the following are equivalent:
\begin{enumerate}
 \item[i)] $\mathcal{S}(\mathrsfs{A})$ is finite;
 \item[ii)] there is an integer $n$ such that $\|(\partial \mathrsfs{A})^{k}\|\le n$ for all $k\ge 1$.
 \item[iii)] $\mathcal{S}(\partial\mathrsfs{A})$ is finite;
 \item[iv)] there is an integer $m$ such that $\|\mathrsfs{A}^{k}\|\le m$ for all $k\ge 1$.
\end{enumerate}
\end{prop}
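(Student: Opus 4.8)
The plan is to run the cycle (i) $\Rightarrow$ (iv) $\Rightarrow$ (iii) $\Rightarrow$ (ii) $\Rightarrow$ (i); since $\partial(\partial\mathrsfs{A})=\mathrsfs{A}$, the last two implications are just the first two applied to $\partial\mathrsfs{A}$, so it suffices to establish (i) $\Rightarrow$ (iv) and (iv) $\Rightarrow$ (iii). Two inputs are used throughout. The first is the dictionary recorded just before the statement: $(\mathrsfs{A}^{k}_{\mathcal{I}},w)$ is the orbit graph of $w\in Q^{k}$ for the action $Q^{*}\overset{\cdot}{\curvearrowleft}A^{*}$, so $\|\mathrsfs{A}^{k}\|=\max_{w\in Q^{k}}|\{w\cdot v:v\in A^{*}\}|$, and dually $\|(\partial\mathrsfs{A})^{k}\|$ is the largest orbit of $\mathcal{S}(\mathrsfs{A})$ (acting through its generators $\mathrsfs{A}_{q}$) on the level $A^{k}$. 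The second is the elementary fact that a $d$-generated subsemigroup $T$ of a product $\prod_{i\in I}S_{i}$ with all $|S_{i}|\le M$ satisfies $|T|\le M^{M^{d}}$: each coordinate $i$ is determined, as far as every element of $T$ is concerned, by the ``type'' $(t_{1}^{(i)},\dots,t_{d}^{(i)})$ of the $d$ generators at $i$, only $\le M^{d}$ types occur, and $T$ embeds into the product over one representative coordinate per occurring type.

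For (i) $\Rightarrow$ (iv), unwinding the recursion for the coupled action yields, for $w=q_{1}\cdots q_{k}$ and $v\in A^{*}$ (with $\mathrsfs{A}_{\emptyset}=\mathrm{id}$),
\[
w\cdot v=\bigl(q_{1}\cdot\mathrsfs{A}_{q_{2}\cdots q_{k}}(v)\bigr)\bigl(q_{2}\cdot\mathrsfs{A}_{q_{3}\cdots q_{k}}(v)\bigr)\cdots\bigl(q_{k-1}\cdot\mathrsfs{A}_{q_{k}}(v)\bigr)\bigl(q_{k}\cdot v\bigr).
\]
Each factor has the shape $q\cdot\mathrsfs{A}_{s}(v)$ with $q\in Q$ and $s\in\mathcal{S}(\mathrsfs{A})\cup\{1\}$, and $q\cdot u$ depends on $u\in A^{*}$ only through the transformation $\delta_{u}\colon p\mapsto p\cdot u$ of $Q$, that is, through an element of the finite transition monoid $\mathcal{T}$ of $\mathrsfs{A}_{\mathcal{I}}$. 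Hence $w\cdot v$ is a function of $w$ and of the tuple $(\delta_{\mathrsfs{A}_{s}(v)})_{s\in\mathcal{S}(\mathrsfs{A})\cup\{1\}}$; when $\mathcal{S}(\mathrsfs{A})$ is finite this tuple ranges over a set of size at most $|\mathcal{T}|^{\,|\mathcal{S}(\mathrsfs{A})|+1}$, so $|\{w\cdot v:v\in A^{*}\}|$ is bounded independently of $k$, which is (iv).

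For (iv) $\Rightarrow$ (iii), let $M_{k}$ be the transition monoid of $\mathrsfs{A}^{k}_{\mathcal{I}}$, i.e. the image of $A^{*}$ in $\mathrm{Maps}(Q^{k},Q^{k})$ under $v\mapsto(w\mapsto w\cdot v)$. It is generated by the $\le|A|$ maps $w\mapsto w\cdot a$ and preserves each orbit of the action, and these orbits have size $\le m$ by hypothesis, so the fact above gives $|M_{k}|\le(m^{m})^{(m^{m})^{|A|}}$, uniformly in $k$. Reading off the dual coupled action, each generator $(\partial\mathrsfs{A})_{a}$ acts on $Q^{k}$ as the conjugate of $w\mapsto w\cdot a$ by the reversal map of $Q^{k}$, so $\mathcal{S}(\partial\mathrsfs{A})|_{Q^{k}}$ has the same cardinality as $M_{k}$; since $\mathcal{S}(\partial\mathrsfs{A})$ acts faithfully on $Q^{*}=\bigcup_{k}Q^{k}$, it is the inverse limit of these uniformly bounded quotients, hence finite. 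Applying (i) $\Rightarrow$ (iv) and (iv) $\Rightarrow$ (iii) to $\partial\mathrsfs{A}$ then supplies (iii) $\Rightarrow$ (ii) and (ii) $\Rightarrow$ (i), closing the cycle.

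The delicate point is (i) $\Rightarrow$ (iv): finiteness of $\mathcal{S}(\mathrsfs{A})$ only says that words over $Q$ act in finitely many ways on $A^{*}$, and one must convert this into a uniform bound on how words over $A$ move words over $Q$ — a genuinely ``dual'' assertion. The displayed product formula is what localizes the dependence of $w\cdot v$ on $v$ to a bounded amount of data once $\mathcal{S}(\mathrsfs{A})$ is finite; with that in hand, the bounded-product lemma together with a routine inverse-limit argument does the rest, and the symmetry $\partial\partial=\mathrm{id}$ provides the other half of the equivalences for free.
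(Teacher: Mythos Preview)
Your proof is correct, and the reversal identity $(\partial\mathrsfs{A})_a=\mathrm{rev}\circ(w\mapsto w\cdot a)\circ\mathrm{rev}$ together with the bounded-product lemma and the inverse-limit step are all valid and carefully executed. But the route differs from the paper's and is heavier than it needs to be at one point.

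The paper runs the cycle (i)$\Rightarrow$(ii)$\Rightarrow$(iii)$\Rightarrow$(iv)$\Rightarrow$(i). Its (i)$\Rightarrow$(ii) is a one-liner: the connected components of $(\partial\mathrsfs{A})^{k}$ are precisely the orbits of $\mathcal{S}(\mathrsfs{A})$ on $A^{k}$ (this is the second half of the very dictionary you quote), and every orbit of a finite semigroup has cardinality at most $|\mathcal{S}(\mathrsfs{A})|+1$, uniformly in $k$. By contrast, your (i)$\Rightarrow$(iv) asks to bound the orbits of the \emph{dual} action of $A^{*}$ on $Q^{k}$ from finiteness of $\mathcal{S}(\mathrsfs{A})$, which is genuinely cross-directional and forces you to unwind the coupled-action formula and invoke the transition monoid. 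So what you flag as ``the delicate point'' is an artefact of the cycle you chose; had you gone (i)$\Rightarrow$(ii) instead, that step would be immediate and duality would still furnish the other half. Your (iv)$\Rightarrow$(iii) and the paper's (ii)$\Rightarrow$(iii) are the same idea (bounded components force a uniformly bounded action monoid at every level, hence a finite inverse limit), but your version is the more explicit of the two: you give a concrete bound via the product lemma and make the reversal conjugation precise, whereas the paper just counts isomorphism types of small orbit-automata.

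One caveat shared by both arguments: identifying $\|(\partial\mathrsfs{A})^{k}\|$ (resp.\ $\|\mathrsfs{A}^{k}\|$) with the largest orbit size presupposes that connected components coincide with orbits, which holds when the transition maps are permutations (i.e.\ $\mathrsfs{A}$ invertible for (ii), reversible for (iv)) but can fail for an arbitrary transducer. The paper makes the same identification, so this is not a gap specific to your write-up.
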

\begin{proof}
i) $\Rightarrow$ ii). Suppose that $\mathcal{S}(\mathrsfs{A})$ is finite and consider the orbits of $\mathcal{S}(\mathrsfs{A})$ on the sets $A^k$. These orbits correspond to the connected components of the graphs of the orbit $\left ((\partial\mathrsfs{A})^{k}_{\mathcal{I}},u_{0} \right)$, $\left ((\partial\mathrsfs{A})^{k}_{\mathcal{I}},u_1 \right),\ldots, \left ((\partial\mathrsfs{A})^{k}_{\mathcal{I}},u_{n(k)} \right)$ for some $n(k)$. Since $\mathcal{S}(\mathrsfs{A})$ is finite there exists $n$ such that $\|\left((\partial\mathrsfs{A})^{k}_{\mathcal{I}},u_{i} \right)\|\leq n$ for any $k\ge 1$ and $i=1,\ldots, n(k)$. In particular, since $(\partial \mathrsfs{A})^{k}$ is represented by the disjoint union of the automata $\left ((\partial\mathrsfs{A})^{k},u_i \right)$, $\|(\partial \mathrsfs{A})^{k}\|\leq n$ for every $k$.\\
ii) $\Rightarrow$ iii). Suppose that $\|(\partial \mathrsfs{A})^{k}\|\leq n$ for every $k\geq 1$. This implies that up to isomorphism there is only a finite number $N$ of automata $\left ((\partial\mathrsfs{A})^{k}_{\mathcal{I}},u_i \right)$. Notice that such graphs describe the complete action of the dual semigroup $\mathcal{S}(\partial\mathrsfs{A})$ on $Q^{\ast}$. More precisely, the number of possible different actions, i.e. the number of distinct elements in $\mathcal{S}(\mathrsfs{A})$ is uniformly bounded by $N(n|Q|)^{|Q|^{2}}$.  This implies that $\mathcal{S}(\mathrsfs{A})$ is finite.\\
iii) $\Rightarrow$ iv). It follows from i) $\Rightarrow$ ii) by passing to the dual transducer.\\
iv) $\Rightarrow$ i). It follows from ii) $\Rightarrow$ iii) by passing to the dual transducers.
\end{proof}
From the previous proposition we get the following result.
\begin{cor}
Let $\mathrsfs{A}=(Q,A,\cdot,\circ)$ be an invertible transducer and put $G=\mathcal{G}(\mathrsfs{A})$. The following are equivalent:
\begin{enumerate}
 \item[i)] $G$ is finite;
 \item[ii)] $\mathcal{S}(\partial\mathrsfs{A})$ is finite;
  \item[iii)] the cardinality of all the Schreier graphs in the boundary are upper bounded, i.e. there is an integer $n$ such that $\|\Sch((\St_{G}(\ul{u}),A),\St_{G}(\ul{u}))\|\le n$ for all $\ul{u}\in A^{\omega}$.
\end{enumerate}
\end{cor}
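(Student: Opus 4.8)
The plan is to obtain i)$\Leftrightarrow$ii) almost directly from Proposition~\ref{prop: finiteness}, to dispose of i)$\Rightarrow$iii) by a trivial count, and to concentrate the real work in iii)$\Rightarrow$i). For i)$\Leftrightarrow$ii): since $\mathrsfs{A}$ is invertible, $\mathcal{S}(\mathrsfs{A})$ is a semigroup of automorphisms of the rooted tree $A^{*}$, hence cancellative; a finite cancellative semigroup is a group, and a group that contains all the generators $\mathrsfs{A}_{q}$ must coincide with $G=\mathcal{G}(\mathrsfs{A})$. Thus $\mathcal{S}(\mathrsfs{A})$ is finite if and only if $G$ is finite, and then i)$\Leftrightarrow$ii) is precisely the equivalence of items i) and iii) of Proposition~\ref{prop: finiteness} applied to $\mathrsfs{A}$. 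For i)$\Rightarrow$iii): when $G$ is finite the vertex set of $\Sch(\St_{G}(\ul{u}),A)$ is the $G$-orbit of $\ul{u}$, of size at most $|G|$, so $n=|G|$ works.

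The content is iii)$\Rightarrow$i) (equivalently iii)$\Rightarrow$ii)). Here I would invoke Theorem~\ref{thm:schreier}ii): for each $\ul{u}\in A^{\omega}$ one has $(\mathcal{D}_{\mathcal{I}}^{\infty},\ul{u})\simeq(\Sch(\St_{G}(\ul{u}),A),\St_{G}(\ul{u}))$, where $\mathcal{D}_{k}=((\partial\mathrsfs{A})^{-})^{k}_{\mathcal{I}}$, $\mathcal{D}_{\mathcal{I}}^{\infty}=\varprojlim\{\mathcal{D}_{k}\}_{k\ge1}$, and the canonical projections $\varphi_{k}\colon\mathcal{D}_{\mathcal{I}}^{\infty}\to\mathcal{D}_{k}$ act by $\ul{w}\mapsto\ul{w}[k]$. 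So hypothesis iii) reads $\|(\mathcal{D}_{\mathcal{I}}^{\infty},\ul{u})\|\le n$ for every $\ul{u}\in A^{\omega}$, and the goal is a uniform bound $\|(\partial\mathrsfs{A})^{k}\|\le n$ for all $k$, after which Proposition~\ref{prop: finiteness} closes the argument.

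The mechanism I have in mind is the following. Fix $k\ge1$ and $v\in A^{k}$, and choose any $\ul{u}\in A^{\omega}$ with $\ul{u}[k]=v$. In the $\wt{Q}$-labelled graph $\mathcal{D}_{k}$ the connected component of $v$ is the set of words reachable from $v$ along $\wt{Q}$-edges, i.e.\ the $G$-orbit $\{g\circ v:g\in G\}\subseteq A^{k}$; likewise the component of $\ul{u}$ in $\mathcal{D}_{\mathcal{I}}^{\infty}$ is the $G$-orbit of $\ul{u}$. Using the prefix-compatibility of the tree action, namely $(g\circ\ul{u})[k]=g\circ(\ul{u}[k])$ — true for the generators by the recursive formula for $\circ$, hence for every $g\in G$ — for each $w=g\circ v$ in the component of $v$ the word $g\circ\ul{u}$ lies in the component of $\ul{u}$ and $\varphi_{k}(g\circ\ul{u})=w$. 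Hence $\varphi_{k}$ maps the component of $\ul{u}$ onto that of $v$, so $\|(\mathcal{D}_{k},v)\|\le\|(\mathcal{D}_{\mathcal{I}}^{\infty},\ul{u})\|\le n$; as $v$ was arbitrary, $\|\mathcal{D}_{k}\|\le n$. Finally $((\partial\mathrsfs{A})^{-})^{k}=((\partial\mathrsfs{A})^{k})^{-}$ by Proposition~\ref{prop: inverse of products}, and the operation $(\cdot)^{-}$ only adds reversed edges, so it does not change the connected components; therefore $\|(\partial\mathrsfs{A})^{k}\|=\|\mathcal{D}_{k}\|\le n$ for all $k$, and Proposition~\ref{prop: finiteness} then yields that $\mathcal{S}(\partial\mathrsfs{A})$ and $\mathcal{S}(\mathrsfs{A})$ are finite, so $G$ is finite by the first paragraph.

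The step I expect to be the main obstacle is the surjectivity of $\varphi_{k}$ onto a prescribed component downstairs: one must check that the component of $v$ in the $\wt{Q}$-labelled graph $\mathcal{D}_{k}$ is exactly the full $G$-orbit of $v$ on the finite level $A^{k}$ (here one uses that the subsemigroup of the symmetric group on $A^{k}$ generated by the images of $Q$ is already a group), and then that every orbit point admits an explicit lift to the component of $\ul{u}$ through the self-similar identity above. The remaining bookkeeping — enriched versus plain powers, and the fact that on $A^{\omega}$ the forward $Q^{*}$-orbit and the $G$-orbit need not coincide (whereas in $\mathcal{D}_{\mathcal{I}}^{\infty}$ one genuinely sees the $G$-orbit) — is routine but should be spelled out with care.
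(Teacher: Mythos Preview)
Your proof is correct and follows essentially the same route as the paper: both use Proposition~\ref{prop: finiteness} for i)$\Leftrightarrow$ii) and the identification from Theorem~\ref{thm:schreier} of boundary Schreier graphs with components of $\mathcal{D}_{\mathcal{I}}^{\infty}$ for the equivalence with iii). Your treatment is in fact a bit more careful than the paper's in two places. First, you spell out why $\mathcal{S}(\mathrsfs{A})$ finite is equivalent to $G$ finite (cancellativity), which the paper leaves implicit. Second, for iii)$\Rightarrow$i) the paper argues by contrapositive, asserting that an unbounded sequence of components $((\partial\mathrsfs{A})^{k},v_{1}\cdots v_{k})$ yields a point $v_{1}v_{2}\cdots\in A^{\omega}$ with infinite orbit, without explaining why the words $v_{1}\cdots v_{k}$ can be taken as prefixes of a single infinite word (this needs a K\"onig-type selection). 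Your direct argument---every component of $\mathcal{D}_{k}$ is the $\varphi_{k}$-image of a boundary component via the prefix identity $(g\circ\ul{u})[k]=g\circ\ul{u}[k]$---sidesteps this issue entirely and is the cleaner way to close the loop.
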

\begin{proof}
Observe that, in the case of groups, it follows from Proposition \ref{prop: inverse of products} and Theorem \ref{thm:schreier} that the graphs of the orbits $\left((\partial\mathrsfs{A})^{k}_{\mathcal{I}}, u\right)$ of $(\partial\mathrsfs{A})^{k}$ seen as inverse automata are given by the Schreier graphs $\Sch(H,A)$ and there is a bijection between the connected components of $\Sch(H,A)$ and $(\partial \mathrsfs{A})^{k}$. Proposition \ref{prop: finiteness} implies that $G$ is finite if and only if $\|(\partial \mathrsfs{A})^{k}\|\leq n$ for every $k$. In particular, we have that $\|\Sch((\St_{G}(\ul{u}),A),\St_{G}(\ul{u}))\|\le n$ for all $\ul{u}\in A^{\omega}$. On the other hand, if there is a sequence of components $((\partial \mathrsfs{A})^{k},v_1\cdots v_k)$ such that $|((\partial \mathrsfs{A})^{k},v_1\cdots v_k)|>k$, then the orbit of $v=v_1v_2\cdots \in A^{\omega}$ is unbounded. This implies $\|\Sch((\St_{G}(\ul{u}),A),\St_{G}(\ul{u}))\|=\infty$.
\end{proof}

\begin{prop}\label{prop: norm of product}
Let $\mathrsfs{A},\mathrsfs{B}$ be two inverse transducers, and let $p,q$ be two vertices belonging to $\mathrsfs{A},\mathrsfs{B}$, respectively. The following conditions are equivalent:
\begin{enumerate}
\item [i)]
$
\|\left(\mathrsfs{A}\mathrsfs{B}, (p,q)\right)\|= \|(\mathrsfs{A},p)\|
$
\item[ii)]
$
 \mathrsfs{A}_{p}\left(L(\mathrsfs{A}_{\mathcal{I}},p)\right)\subseteq L(\mathrsfs{B}_{\mathcal{I}},q)
$
\item[iii)]
$
L\left(\mathrsfs{A}\mathrsfs{B}, (p,q)\right)=L(\mathrsfs{A}_{\mathcal{I}},p)
$
\end{enumerate}
\end{prop}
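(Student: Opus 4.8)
The plan is to prove the cycle of implications i) $\Rightarrow$ iii) $\Rightarrow$ ii) $\Rightarrow$ i), exploiting the fact that for an inverse transducer the connected component containing a vertex is just the graph of the orbit under the relevant action, and that all three objects here are inverse automata recognizing subgroups of $F_A$ (or languages closed under reduction). First I would unwind the definition of the product $\mathrsfs{A}\mathrsfs{B}$: a path $(p,q)\vlongmapright{u|w}(p',q')$ exists precisely when there is an intermediate word $v$ with paths $p\vlongmapright{u|v}p'$ in $\mathrsfs{A}$ and $q\vlongmapright{v|w}q'$ in $\mathrsfs{B}$, i.e. $v=\mathrsfs{A}_p(u)$ and $w=\mathrsfs{B}_q(v)$. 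In particular the input automaton $(\mathrsfs{A}\mathrsfs{B})_{\mathcal{I}}$ at $(p,q)$ reads exactly those $u\in L(\mathrsfs{A}_{\mathcal{I}},p)$ such that $\mathrsfs{A}_p(u)\in L(\mathrsfs{B}_{\mathcal{I}},q)$, which already makes the equivalence ii) $\Leftrightarrow$ iii) almost immediate once one checks that $L(\mathrsfs{A}\mathrsfs{B},(p,q))\subseteq L(\mathrsfs{A}_{\mathcal{I}},p)$ always holds (project onto the first coordinate).

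Concretely, for iii) $\Rightarrow$ ii): if the two languages coincide, then every $u\in L(\mathrsfs{A}_{\mathcal{I}},p)$ gives a closed path at $(p,q)$, so the corresponding $\mathrsfs{A}_p(u)$ labels a closed path at $q$ in $\mathrsfs{B}_{\mathcal{I}}$, i.e. $\mathrsfs{A}_p(u)\in L(\mathrsfs{B}_{\mathcal{I}},q)$; this is exactly the inclusion $\mathrsfs{A}_p(L(\mathrsfs{A}_{\mathcal{I}},p))\subseteq L(\mathrsfs{B}_{\mathcal{I}},q)$. For ii) $\Rightarrow$ iii): conversely, given $u\in L(\mathrsfs{A}_{\mathcal{I}},p)$, by hypothesis $\mathrsfs{A}_p(u)\in L(\mathrsfs{B}_{\mathcal{I}},q)$, so both coordinate paths are closed and $u\in L(\mathrsfs{A}\mathrsfs{B},(p,q))$; the reverse inclusion is the trivial projection, giving equality of languages. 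Then iii) $\Rightarrow$ i) follows from Proposition~\ref{prop: immersion lang hom}: since $\mathrsfs{A}\mathrsfs{B}$ and $\mathrsfs{A}_{\mathcal{I}}$ are inverse automata recognizing the same language, their cores (equivalently, their connected components containing the respective base points) are isomorphic, hence have the same number of vertices, which is what $\|\cdot\|$ measures. Here I should be slightly careful that $\mathrsfs{A}\mathrsfs{B}$ is genuinely an inverse transducer when $\mathrsfs{A},\mathrsfs{B}$ are — this is where the involutive structure on $\wt{A}\times\wt{A}$ and determinism are used, and one may invoke the reasoning behind Lemma~\ref{lemma: enriching} and Proposition~\ref{prop: inverse of products}.

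For the remaining implication i) $\Rightarrow$ iii), I would argue as follows. There is always the projection morphism $(\mathrsfs{A}\mathrsfs{B},(p,q))\to(\mathrsfs{A}_{\mathcal{I}},p)$ sending $(p',q')\mapsto p'$, which is surjective on the connected component precisely when... no — the point is that this morphism is a covering of the component of $p$ restricted to the image, and $\|(\mathrsfs{A}\mathrsfs{B},(p,q))\| = \|(\mathrsfs{A}_{\mathcal{I}},p)\|$ forces the fibers over the reachable vertices to be singletons, so the morphism is injective on the component containing $(p,q)$; being also label-preserving and determinism-preserving, it is then an isomorphism onto the component of $p$, whence $L(\mathrsfs{A}\mathrsfs{B},(p,q)) = L(\mathrsfs{A}_{\mathcal{I}},p)$. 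I expect the main obstacle to be exactly this step: one must verify that equality of vertex-counts of the two finite connected inverse graphs, together with the existence of the surjective label-preserving projection, is enough to conclude the projection is a bijection on the relevant component — this is a counting/covering-space argument that needs the determinism (no vertex of the $\mathrsfs{A}$-component can be "missed" and no two vertices of the product component can collapse) rather than a one-line deduction. Everything else reduces to the bookkeeping of the product construction and an appeal to Proposition~\ref{prop: immersion lang hom}.
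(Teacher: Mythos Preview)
Your proposal is correct and tracks the paper's proof closely. The equivalence ii) $\Leftrightarrow$ iii) is handled identically: the paper records it via the formula $L(\mathrsfs{A}\mathrsfs{B},(p,q))=L(\mathrsfs{A}_{\mathcal{I}},p)\cap \mathrsfs{A}_p^{-1}(L(\mathrsfs{B}_{\mathcal{I}},q))$, which is exactly your unwinding of the product. For the link with i), the paper argues i) $\Leftrightarrow$ ii) directly by observing that $\|(\mathrsfs{A}\mathrsfs{B},(p,q))\|>\|(\mathrsfs{A},p)\|$ holds precisely when two distinct states $(p',t),(p',t')$ lie in the component of $(p,q)$, and then translates the existence of such a pair, via the inverse structure, into a witness $uv^{-1}\in L(\mathrsfs{A}_{\mathcal{I}},p)$ with $\mathrsfs{A}_p(uv^{-1})\notin L(\mathrsfs{B}_{\mathcal{I}},q)$. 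Your route i) $\Leftrightarrow$ iii) via the first-coordinate projection being a surjective morphism of connected inverse automata, hence a bijection under the cardinality hypothesis, is the same argument in morphism-theoretic clothing: ``the projection fails to be injective'' is exactly ``two states share a first coordinate''. The appeal to Proposition~\ref{prop: immersion lang hom} for iii) $\Rightarrow$ i) is a clean replacement for the converse direction; note that since the transducers are complete, the connected components already coincide with their cores, so your parenthetical is justified here.
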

\begin{proof}
i) $\Leftrightarrow$ ii) It is straightforward to check that $\|\left(\mathrsfs{A}\mathrsfs{B}, (p,q)\right)\|\ge \|(\mathrsfs{A},p)\|$ holds. Note that the condition $\|\left(\mathrsfs{A}\mathrsfs{B}, (p,q)\right)\|> \|(\mathrsfs{A},p)\|$ is clearly equivalent in having two distinct states $(p',t)$, $(p',t')$ of $\mathrsfs{A}\mathrsfs{B}$ in the same connected component of $(p,q)$. This occurs if and only if there are two words $u,v\in \wt{A}^{*}$ such that $p\longfr{u}{u_{1}}p'$, $p\longfr{v}{v_{1}}p'$ are paths in $\mathrsfs{A}$, with $u_{1}=\mathrsfs{A}_{p}(u)$, $v_{1}=\mathrsfs{A}_{p}(v)$, and there are paths $q\vlongfr{u_{1}}{u_{2}}t$, $q\vlongfr{v_{1}}{v_{2}}t'$ in $\mathrsfs{B}$. Since the two transducers are inverse, these last conditions are equivalent to the fact that $p\vvvlongfr{uv^{-1}}{u_{1}v_{1}^{-1}}p\mbox{ is a path in } \mathrsfs{A}$
while $q\vvvlongfr{u_{1}v_{1}^{-1}}{u_{2}v_{2}^{-1}}q$ is not a path in $\mathrsfs{B}$, i.e. $\mathrsfs{A}_{p}\left(L(\mathrsfs{A}_{\mathcal{I}},p)\right)\nsubseteq L(\mathrsfs{B}_{\mathcal{I}},q)$.

\noindent Equivalence ii) $\Leftrightarrow$ iii) is a consequence of the following equality:
$$
L\left(\mathrsfs{A}\mathrsfs{B}, (p,q)\right)=L(\mathrsfs{A}_{\mathcal{I}},p)\cap \mathrsfs{A}_{p}^{-1}\left(L(\mathrsfs{B}_{\mathcal{I}},q)\right)
$$
\end{proof}
\noindent If $\mathrsfs{A}$ is invertible we have the stronger condition
$$
\|\left(\mathrsfs{A}\mathrsfs{B}, (p,q)\right)\|\ge\max\{\|(\mathrsfs{A},p)\|, \|(\mathrsfs{B},p)\|\}
$$
In this case we have the following proposition whose proof is similar to the previous one and it is left to the reader.
\begin{prop}\label{prop: norm of product invertible}
Let $\mathrsfs{A},\mathrsfs{B}$ be two inverse transducers, with $\mathrsfs{A}$ invertible, and let $p,q$ be two vertices belonging to $\mathrsfs{A},\mathrsfs{B}$, respectively. The following conditions are equivalent:
\begin{enumerate}
\item [i)]
$$
\|\left(\mathrsfs{A}\mathrsfs{B}, (p,q)\right)\|= \max \left\{\|(\mathrsfs{A},p)\|, \|(\mathrsfs{B},q)\|\right\}
$$
\item[ii)] \begin{eqnarray}
\nonumber \mathrsfs{A}_{p}\left(L(\mathrsfs{A}_{\mathcal{I}},p)\right)\subseteq L(\mathrsfs{B}_{\mathcal{I}},q)\mbox{ if }\|(\mathrsfs{B},q)\|\le \|(\mathrsfs{A},p)\|\\
\nonumber L(\mathrsfs{B}_{\mathcal{I}},q)\subseteq \mathrsfs{A}_{p}\left(L(\mathrsfs{A}_{\mathcal{I}},p)\right)\mbox{ if }\|(\mathrsfs{A},p)\|\le \|(\mathrsfs{B},q)\|
\end{eqnarray}
\item[iii)] \begin{eqnarray}
\nonumber L\left(\mathrsfs{A}\mathrsfs{B}, (p,q)\right)&=&L(\mathrsfs{A}_{\mathcal{I}},p)\mbox{ if }\|(\mathrsfs{B},q)\|\le \|(\mathrsfs{A},p)\|\\
\nonumber L\left(\mathrsfs{A}\mathrsfs{B}, (p,q)\right)&=&\mathrsfs{A}_{p}^{-1}\left(L(\mathrsfs{B}_{\mathcal{I}},q)\right)\mbox{ if }\|(\mathrsfs{A},p)\|\le \|(\mathrsfs{B},q)\|
\end{eqnarray}
\end{enumerate}
\end{prop}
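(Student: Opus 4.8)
# Proof Proposal for Proposition \ref{prop: norm of product invertible}

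The plan is to mimic the proof of Proposition \ref{prop: norm of product} but to keep track of the symmetry afforded by invertibility of $\mathrsfs{A}$. First, I would establish the stronger lower bound $\|\left(\mathrsfs{A}\mathrsfs{B}, (p,q)\right)\|\ge\max\{\|(\mathrsfs{A},p)\|, \|(\mathrsfs{B},q)\|\}$ explicitly. The inequality $\ge\|(\mathrsfs{A},p)\|$ is already noted (it follows as in Proposition \ref{prop: norm of product}, since the projection onto the first coordinate is surjective on the component). For $\ge\|(\mathrsfs{B},q)\|$ one uses that $\mathrsfs{A}$ is invertible: given a path $q\longfr{w}{w'}t$ in $\mathrsfs{B}$, the input letters $w$ of this path can be realized as output of some path starting at $p$ in $\mathrsfs{A}$, because the maps $\mathrsfs{A}_{p}$ act on all of $\wt{A}^{*}$ and, $\mathrsfs{A}$ being invertible, every word over $\wt A$ occurs as an output word along a path from $p$; this produces distinct states $(p',t)$ of $\mathrsfs{A}\mathrsfs{B}$ for distinct $t$, hence the second half of the bound. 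I should be slightly careful: I want distinct second coordinates to force distinct states of $\mathrsfs{A}\mathrsfs{B}$, which is automatic, and I want the whole component of $q$ in $\mathrsfs{B}$ to be ``covered'', which is where invertibility of $\mathrsfs{A}$ enters.

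Next I would prove i) $\Leftrightarrow$ ii). Split into the two regimes. In the regime $\|(\mathrsfs{B},q)\|\le \|(\mathrsfs{A},p)\|$, equality in i) means the component of $(p,q)$ has exactly $\|(\mathrsfs{A},p)\|$ vertices, i.e. no two distinct states $(p',t),(p',t')$ of $\mathrsfs{A}\mathrsfs{B}$ lie in that component; exactly as in Proposition \ref{prop: norm of product} this translates, using that both transducers are inverse, into the condition that whenever $p\vvvlongfr{uv^{-1}}{u_{1}v_{1}^{-1}}p$ is a path in $\mathrsfs{A}$ then $q\vvvlongfr{u_{1}v_{1}^{-1}}{\ast}q$ is a path in $\mathrsfs{B}$, which is precisely $\mathrsfs{A}_{p}\left(L(\mathrsfs{A}_{\mathcal{I}},p)\right)\subseteq L(\mathrsfs{B}_{\mathcal{I}},q)$. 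In the regime $\|(\mathrsfs{A},p)\|\le \|(\mathrsfs{B},q)\|$ the symmetric analysis applies: equality forces that the component of $q$ in $\mathrsfs{B}$ is not ``split'' when we pull it back through $\mathrsfs{A}$, i.e. every loop at $q$ labelled by some $w\in L(\mathrsfs{B}_{\mathcal{I}},q)$ must arise as $w=\mathrsfs{A}_p(z)$ for a loop $z$ at $p$; since $\mathrsfs{A}$ is invertible $\mathrsfs{A}_p$ is a bijection of $\wt A^{*}$, so this says $L(\mathrsfs{B}_{\mathcal{I}},q)\subseteq \mathrsfs{A}_{p}\left(L(\mathrsfs{A}_{\mathcal{I}},p)\right)$.

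For ii) $\Leftrightarrow$ iii) I would invoke the same language identity used before, namely
$$
L\left(\mathrsfs{A}\mathrsfs{B}, (p,q)\right)=L(\mathrsfs{A}_{\mathcal{I}},p)\cap \mathrsfs{A}_{p}^{-1}\left(L(\mathrsfs{B}_{\mathcal{I}},q)\right),
$$
which holds for inverse transducers. In the regime $\|(\mathrsfs{B},q)\|\le\|(\mathrsfs{A},p)\|$ the inclusion $\mathrsfs{A}_p(L(\mathrsfs{A}_{\mathcal{I}},p))\subseteq L(\mathrsfs{B}_{\mathcal{I}},q)$ is equivalent to $L(\mathrsfs{A}_{\mathcal{I}},p)\subseteq \mathrsfs{A}_p^{-1}(L(\mathrsfs{B}_{\mathcal{I}},q))$, so the intersection collapses to $L(\mathrsfs{A}_{\mathcal{I}},p)$; in the other regime, $L(\mathrsfs{B}_{\mathcal{I}},q)\subseteq\mathrsfs{A}_p(L(\mathrsfs{A}_{\mathcal{I}},p))$ is equivalent to $\mathrsfs{A}_p^{-1}(L(\mathrsfs{B}_{\mathcal{I}},q))\subseteq L(\mathrsfs{A}_{\mathcal{I}},p)$ (again using that $\mathrsfs{A}_p$ is a bijection on $\wt A^*$ and preserves $\sim$ by \cite{SilvaVirtual}), so the intersection collapses to $\mathrsfs{A}_p^{-1}(L(\mathrsfs{B}_{\mathcal{I}},q))$. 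The main obstacle, and the one point deserving genuine care rather than a ``left to the reader'', is the $\ge\|(\mathrsfs{B},q)\|$ half of the lower bound and the reverse-regime translation in ii): one must check that invertibility of $\mathrsfs{A}$ really does let an arbitrary word over $\wt A$ appear as an output word of a path from $p$, and that the correspondence between states of $\mathrsfs{A}\mathrsfs{B}$ and pairs of paths is faithful enough that ``no splitting of the $\mathrsfs{B}$-component'' is exactly the stated language inclusion. Everything else is a routine adaptation of Proposition \ref{prop: norm of product}, so I would indeed abbreviate those parts.
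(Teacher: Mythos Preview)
Your proposal is correct and follows exactly the approach the paper itself indicates: the paper's ``proof'' of this proposition consists solely of the remark that it is similar to Proposition~\ref{prop: norm of product} and is left to the reader. You have simply (and correctly) carried out that adaptation, including the two places where invertibility of $\mathrsfs{A}$ genuinely enters --- the lower bound $\|(\mathrsfs{A}\mathrsfs{B},(p,q))\|\ge\|(\mathrsfs{B},q)\|$ via surjectivity of $\mathrsfs{A}_{p}$ on $\wt{A}^{*}$, and the reverse-regime translation in i)\,$\Leftrightarrow$\,ii) via injectivity of $\mathrsfs{A}_{p}$ --- so there is nothing to compare.
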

\noindent Note that we can also use the shorter notation $\mathrsfs{A}_{p}\left(L(\mathrsfs{A}_{\mathcal{I}},p)\right)=L(\mathrsfs{A}_{\mathcal{O}},p)$. We say that a connected component $(\mathrsfs{B},q)$ of an inverse transducer $\mathrsfs{B}$ has the \emph{swapping inclusion property} with respect to the pair $(p_{1},p_{2})$ if the condition of Proposition \ref{prop: norm of product} holds for $\mathrsfs{A}=\mathrsfs{B}$, i.e. $L(\mathrsfs{B}_{\mathcal{O}}, p_{1})\subseteq L(\mathrsfs{B}, p_{2})$. Furthermore, if $\mathrsfs{B}$ is invertible, we say that $(\mathrsfs{B},q)$  has the \emph{swapping invariant property} with respect to the pair $(p_{1},p_{2})$ if the conditions of Proposition \ref{prop: norm of product invertible} hold: $L(\mathrsfs{B}_{\mathcal{O}}, p_{1})= L(\mathrsfs{B}, p_{2})$, if in addition this condition holds for any pair $(p_{1},p_{2})$ of $(\mathrsfs{B},q)$, then $(\mathrsfs{B},q)$ is named \emph{supersymmetric}. Note that by Proposition \ref{prop: immersion lang hom}, the supersymmetry condition implies that for any pair $p_{1}, p_{2}$ of vertices $(\mathrsfs{B}_{\mathcal{I}}, p_{1})\simeq (\mathrsfs{B}_{\mathcal{I}}, p_{2})$. There is another symmetry given by the swapping invariance condition as the following proposition shows.
\begin{prop}\label{prop: swapping is isomorphism}
Let $\mathrsfs{B}=(Q,A,\cdot, \circ)$ be an inverse transducer and let $(\mathrsfs{B},q)$ be a connected component which is swapping invariant with respect to $(p,p)$, then $(\mathrsfs{B}_{\mathcal{O}}, p)\simeq (\mathrsfs{B}_{\mathcal{I}}, p)$.
\end{prop}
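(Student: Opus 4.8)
The plan is to reduce the statement to the uniqueness of the minimal inverse automaton. Concretely, I would show that, once we root them at $p$ and restrict to the component $(\mathrsfs{B},q)$, \emph{both} the input automaton $\mathrsfs{B}_{\mathcal{I}}$ and the output automaton $\mathrsfs{B}_{\mathcal{O}}$ are inverse automata, and that the swapping invariance hypothesis forces them to recognize the \emph{same} language; then Proposition~\ref{prop: immersion lang hom} closes the argument immediately, since it identifies an inverse automaton (up to isomorphism) with the minimal one for its language.

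First I would check that $(\mathrsfs{B}_{\mathcal{I}},p)$ is an inverse automaton: it is deterministic because $\mathrsfs{B}$ is a transducer (the transition $\cdot$ is a genuine function), and it is involutive because the $\wt{A}\times\wt{A}$-digraph underlying the inverse transducer $\mathrsfs{B}$ carries the involution $(a,b)\mapsto(a^{-1},b^{-1})$, so $s\longfr{a}{b}s'$ forces $s'\longfr{a^{-1}}{b^{-1}}s$. Moreover its connected component of $p$ has exactly the vertex set of $(\mathrsfs{B},q)$, since projecting any path of $(\mathrsfs{B},q)$ onto its input label yields a path of $\mathrsfs{B}_{\mathcal{I}}$. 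The same reasoning applies to $(\mathrsfs{B}_{\mathcal{O}},p)$: involutivity is verbatim the same, and the component of $p$ again has the vertex set of $(\mathrsfs{B},q)$. The one point that genuinely uses the hypothesis that $\mathrsfs{B}$ is invertible (which is part of the assumptions, since the swapping invariant property is only defined for invertible transducers) is the \emph{determinism} of $\mathrsfs{B}_{\mathcal{O}}$: for a state $s$ and a letter $b\in\wt{A}$ there is at most one $a$ with $\lambda(s,a)=b$, hence at most one $b$-labelled edge out of $s$ in $\mathrsfs{B}_{\mathcal{O}}$, namely $s\longfr{b}{}s\cdot a$. Thus $(\mathrsfs{B}_{\mathcal{O}},p)$ is also an inverse automaton.

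Next I would unravel the definition: $(\mathrsfs{B},q)$ being swapping invariant with respect to $(p,p)$ means, by Proposition~\ref{prop: norm of product invertible} applied with $\mathrsfs{A}=\mathrsfs{B}$ and $p_{1}=p_{2}=p$, that $L(\mathrsfs{B}_{\mathcal{O}},p)=L(\mathrsfs{B},p)$. Since $L(\mathrsfs{B},p)=L(\mathrsfs{B}_{\mathcal{I}},p)$ and $L(\mathrsfs{B}_{\mathcal{O}},p)=\mathrsfs{B}_{p}\bigl(L(\mathrsfs{B}_{\mathcal{I}},p)\bigr)$ (this is precisely the shorthand introduced just before the statement, valid because the inverse transducer $\mathrsfs{B}$ is complete and deterministic, so $\mathrsfs{B}_{p}$ is a total map on $\wt{A}^{*}$), the two inverse automata $(\mathrsfs{B}_{\mathcal{I}},p)$ and $(\mathrsfs{B}_{\mathcal{O}},p)$ recognize one and the same language $L$. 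By the ``furthermore'' clause of Proposition~\ref{prop: immersion lang hom}, each of them is the minimal inverse automaton recognizing $L$ up to isomorphism; being each isomorphic to that minimal object they are isomorphic to each other, i.e. $(\mathrsfs{B}_{\mathcal{O}},p)\simeq(\mathrsfs{B}_{\mathcal{I}},p)$, which is the assertion. (Alternatively, the two inclusions $L\subseteq L$ produce mutual morphisms whose composites are endomorphisms of connected inverse automata fixing the base point, hence identities; but invoking minimality is cleaner.)

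I do not anticipate a real obstacle: the argument is essentially bookkeeping. The only steps requiring a little care are confirming that $\mathrsfs{B}_{\mathcal{O}}$ is deterministic — the step that actually consumes the invertibility assumption — and checking that the connected components of $p$ in $\mathrsfs{B}_{\mathcal{I}}$ and in $\mathrsfs{B}_{\mathcal{O}}$ both coincide, vertex-wise, with $(\mathrsfs{B},q)$, so that Proposition~\ref{prop: immersion lang hom} may be applied directly to the rooted automata $(\mathrsfs{B}_{\mathcal{I}},p)$ and $(\mathrsfs{B}_{\mathcal{O}},p)$.
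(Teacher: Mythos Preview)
Your argument is internally sound, but it diverges from the paper's in one crucial respect: you \emph{assume} that $\mathrsfs{B}_{\mathcal O}$ is deterministic by invoking invertibility of $\mathrsfs{B}$, whereas the paper opens its proof with ``In general $(\mathrsfs{B}_{\mathcal O},p)$ is not deterministic'' and deduces determinism as a \emph{consequence} of the swapping invariance. The paper's route is: let $\overline{\mathrsfs{B}_{\mathcal O}}$ be the result of folding $\mathrsfs{B}_{\mathcal O}$; a closed path in $\overline{\mathrsfs{B}_{\mathcal O}}$ at $p$ lifts to a closed path in $\mathrsfs{B}_{\mathcal O}$ whose label differs from the original only by insertions of words reducing to $1$; since $L(\mathrsfs{B}_{\mathcal O},p)=L(\mathrsfs{B}_{\mathcal I},p)$ and $\mathrsfs{B}_{\mathcal I}$ is inverse (hence its language is closed under free reduction), the original label already lies in $L(\mathrsfs{B}_{\mathcal O},p)$. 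Thus $L(\overline{\mathrsfs{B}_{\mathcal O}},p)=L(\mathrsfs{B}_{\mathcal O},p)=L(\mathrsfs{B}_{\mathcal I},p)$, minimality of inverse automata (Proposition~\ref{prop: immersion lang hom}) gives $(\mathrsfs{B}_{\mathcal I},p)\simeq(\overline{\mathrsfs{B}_{\mathcal O}},p)$, and a vertex count shows no folding actually occurred.

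Your reading of the definition (``swapping invariant'' is only declared for invertible $\mathrsfs{B}$) is defensible, and under it your shortcut is legitimate. But note that the paper immediately applies this proposition in Corollary~\ref{cor:piccantino} to the symmetric powers $\mathrsfs{S}_j=\bigl((\partial\mathrsfs{A})^{-}\bigr)^{2^j}$ for an $RI$-transducer $\mathrsfs{A}$ that is \emph{not} bireversible; by the remark after Lemma~\ref{lemma: enriching}, $(\partial\mathrsfs{A})^{-}$ is then \emph{not} invertible on $\wt{Q}$, so $\mathrsfs{B}_{\mathcal O}$ genuinely fails to be deterministic there. In other words, the paper's extra work is not gratuitous: it is exactly what makes the proposition applicable where it is used. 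Your version proves the statement as literally phrased but would not support that application.
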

\begin{proof}
In general $(\mathrsfs{B}_{\mathcal{O}},p)$ is not deterministic, so let $\oo{\mathrsfs{B}_{\mathcal{O}}}$ be the inverse graph obtained from $\mathrsfs{B}_{\mathcal{O}}$ applying all the possible foldings. We claim that
$$
L(\mathrsfs{B}_{\mathcal{O}}, p)=L(\oo{\mathrsfs{B}_{\mathcal{O}}}, p)
$$
Since $L(\mathrsfs{B}_{\mathcal{O}}, p)\subseteq L(\oo{\mathrsfs{B}_{\mathcal{O}}}, p)$ holds, we just need to prove the other inclusion. Thus, let $u\in L(\oo{\mathrsfs{B}_{\mathcal{O}}}, p)$. A path $p\path{u}p$ in $\oo{\mathrsfs{B}_{\mathcal{O}}}$ lifts to a path $p\path{u'}p$ in $\mathrsfs{B}_{\mathcal{O}}$ with
$$
u'=v_{1}u_{1}v_{2}u_{2}\ldots v_{n-1}u_{n-1}v_{n}
$$
for some words $v_{i}, u_{i}\in \wt{A}^{*}$ with $u=u_{1}u_{2}\ldots u_{n-1}$ and $\oo{v_{1}}=\oo{v_{2}}=\ldots=\oo{v_{n}}=1$. Since $\mathrsfs{B}_{\mathcal{I}}$ is an inverse graph, being $\mathrsfs{B}$ an inverse transducer, and $u'\in L(\mathrsfs{B}_{\mathcal{O}}, p)= L(\mathrsfs{B}_{\mathcal{I}}, p)$ we also have that $u\in L(\mathrsfs{B}_{\mathcal{I}}, p)=L(\mathrsfs{B}_{\mathcal{O}}, p)$, whence the claim $L(\oo{\mathrsfs{B}_{\mathcal{O}}}, p)\subseteq L(\mathrsfs{B}_{\mathcal{O}}, p)$. Since $L(\mathrsfs{B}_{\mathcal{O}}, p)=L(\oo{\mathrsfs{B}_{\mathcal{O}}}, p)$, and by the minimality property of Proposition \ref{prop: immersion lang hom}, we have that $(\mathrsfs{B}_{\mathcal{I}}, p)\simeq(\oo{\mathrsfs{B}_{\mathcal{O}}}, p)$, whence no folding is performed. Thus, $(\oo{\mathrsfs{B}_{\mathcal{O}}}, p)=(\mathrsfs{B}_{\mathcal{O}}, p)$, and so the statement $(\mathrsfs{B}_{\mathcal{O}}, p)\simeq (\mathrsfs{B}_{\mathcal{I}}, p)$.
\end{proof}
\noindent By Proposition \ref{prop: norm of product invertible} it is obvious that $(\mathrsfs{B},q)$ is swapping invariance with respect to $(p_{1},p_{2})$ if and only if $\|\left(\mathrsfs{B}\mathrsfs{B}, (p_{1},p_{2})\right)\|= \|(\mathrsfs{B},q)\|$, and $(\mathrsfs{B},q)$ is supersymmetric if and only if $\|\left(\mathrsfs{B}\mathrsfs{B}, (p_{1},p_{2})\right)\|= \|(\mathrsfs{B},q)\|$ for all the vertices $(p_{1},p_{2})$ of $(\mathrsfs{B},q)$. The notion of supersymmetric component is related to finite groups defined by $RI$-transducers as we will show. For this reason we define the \emph{symmetric powers} $\mathrsfs{S}_{j}$ $j\ge 0$, of an $RI$-transducer $\mathrsfs{A}$ inductively by $\mathrsfs{S}_{0}=(\partial\mathrsfs{A})^{-}$, and $\mathrsfs{S}_{i}=\mathrsfs{S}_{i-1}^{2}$ for all $i>0$.
We have the following lemma.
\begin{lemma}\label{lem: charact bireav powers}
For any $j\ge 0$, $\mathrsfs{S}_{j}$ is bireversible if and only if $\mathrsfs{S}_{j+1}$ is bireversible. In particular, the product of a non-bireversible component $(\mathrsfs{S}_{j},q)$ having two distinct edges $q\mapright{a|b}p'$,$q'\mapright{c|b}p'$ with any component $(\mathrsfs{S}_{j},s)$ gives rise to a non-bireversible component $\left(\mathrsfs{S}_{j+1},(q,s)\right)$.
\end{lemma}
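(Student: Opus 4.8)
The plan is to base everything on the fact, built into the earlier constructions, that every symmetric power $\mathrsfs{S}_j$ is a reversible inverse transducer. Indeed, by Proposition \ref{prop: inverse of products} and an easy induction one gets $\mathrsfs{S}_j=\left((\partial\mathrsfs{A})^{2^{j}}\right)^{-}$; since $\mathrsfs{A}$ is an $RI$-transducer, $\partial\mathrsfs{A}$ is reversible by Proposition \ref{prop: dual prop} (i), hence so is $(\partial\mathrsfs{A})^{2^{j}}$ by Proposition \ref{prop: product reversible}, and therefore $\mathrsfs{S}_j$ is a reversible inverse transducer by Lemma \ref{lemma: enriching}. It follows that $\mathrsfs{S}_j$ can fail to be bireversible only through non-invertibility or through $(\mathrsfs{S}_j)_{\mathcal{O}}$ failing to be reversible, and in either case one extracts the following configuration: two distinct edges $q\mapright{a|b}p'$, $q'\mapright{c|b}p'$ of $\mathrsfs{S}_j$ with the same terminal $p'$ and the same output letter $b$. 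When $(\mathrsfs{S}_j)_{\mathcal{O}}$ is not reversible this is immediate; when invertibility fails, a pair of distinct edges leaving a single state with a common output letter becomes, after applying the involution of the inverse transducer $\mathrsfs{S}_j$, a pair of distinct edges entering a single state with a common output letter, which is again of the stated form.

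The forward implication of the Lemma is then immediate from Proposition \ref{prop: product reversible}: $\mathrsfs{S}_j$ bireversible forces $\mathrsfs{S}_{j+1}=\mathrsfs{S}_j^{2}$ bireversible. For the converse I argue by contrapositive, which is precisely the ``in particular'' clause. Assume $\mathrsfs{S}_j$ is not bireversible and fix witnessing edges $q\mapright{a|b}p'$, $q'\mapright{c|b}p'$; both lie in the component $(\mathrsfs{S}_j,q)$, the second being incident to the vertex $p'$, which is reached from $q$. Let $s$ be an arbitrary state of $\mathrsfs{S}_j$; by totality of the transition and output functions there is a unique edge $s\mapright{b|d}t$. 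Forming $\mathrsfs{S}_{j+1}=\mathrsfs{S}_j\mathrsfs{S}_j$ and feeding $s\mapright{b|d}t$ to the second factor, the two witnessing edges yield the edges $(q,s)\mapright{a|d}(p',t)$ and $(q',s)\mapright{c|d}(p',t)$ of $\mathrsfs{S}_{j+1}$. They are distinct — distinct origins if $q\ne q'$, and distinct labels (as $a\ne c$) if $q=q'$ — they share the terminal $(p',t)$, and they carry the same output letter $d$; hence $\mathrsfs{S}_{j+1}$ is not bireversible. Moreover both edges lie in the component $(\mathrsfs{S}_{j+1},(q,s))$: the first starts at $(q,s)$, and the second is incident to $(p',t)$, which is a vertex of that component. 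Thus $(\mathrsfs{S}_{j+1},(q,s))$ is not bireversible, and the contrapositive gives that $\mathrsfs{S}_{j+1}$ bireversible implies $\mathrsfs{S}_j$ bireversible.

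The routine parts are the identity $\mathrsfs{S}_j=\left((\partial\mathrsfs{A})^{2^{j}}\right)^{-}$ and the description of the product of transducers. I expect the only points needing real care to be: the reduction of \emph{non-bireversibility} to the symmetric co-terminal/co-output configuration — in particular the passage to inverse edges in the non-invertible case, which is where the involutive structure of $\mathrsfs{S}_j$ enters — and the verification that both constructed edges of $\mathrsfs{S}_{j+1}$ sit in the single component $(\mathrsfs{S}_{j+1},(q,s))$. No quantitative estimate is involved.
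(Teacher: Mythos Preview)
Your proof is correct and follows essentially the same route as the paper's: the forward direction via Proposition~\ref{prop: product reversible}, and the converse by taking the two co-terminal, co-output edges in $\mathrsfs{S}_j$, composing with an arbitrary edge $s\mapright{b|d}s'$ on the right, and reading off two co-terminal, co-output edges in $\mathrsfs{S}_{j+1}$. You are in fact more careful than the paper in two places --- you justify explicitly why $\mathrsfs{S}_j$ is a reversible inverse transducer and why non-bireversibility yields the stated edge configuration (including the non-invertible case via the involution), and you check that both constructed edges lie in the component $(\mathrsfs{S}_{j+1},(q,s))$ --- whereas the paper's argument treats these points as implicit.
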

\begin{proof}
By Proposition \ref{prop: product reversible} we just need to prove the ``if'' part. Thus, suppose that $\mathrsfs{S}_{j+1}=\mathrsfs{S}_{j}^{2}$ is bireversible. We can assume that $\mathrsfs{S}_{j}$ is not bireversible with two distinct edges $q\mapright{a|b}p'$, $q'\mapright{c|b}p'$ in $(\mathrsfs{S}_{j},q)$, and consider any edge $s\mapright{b|d}s'$ in $(\mathrsfs{S}_{j},s)$. Hence $(p,s)\mapright{a|d}(p',s')$, $(t,s)\mapright{c|d}(p',s')$ are two distinct edges in $\left(\mathrsfs{S}_{j+1},(q,s)\right)$, i.e. $\mathrsfs{S}_{j+1}$ is not bireversible, a contradiction.
\end{proof}
\noindent The following proposition shows that for finite groups defined by $RI$-transducers the maximal connected components of the symmetric powers eventually become supersymmetric.
\begin{prop}\label{prop: maximal are supersymmetric}
Let $\mathrsfs{A}$ be an $RI$-transducer such that $G=\mathcal{G}(\mathrsfs{A})$ is finite. With the above notation, there is an integer $n$ such that for all $i\ge n$ the maximal components of $\mathrsfs{S}_{i}$ are supersymmetric.
\end{prop}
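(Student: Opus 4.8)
The plan is to reduce the statement to an elementary monotonicity‑and‑boundedness argument on the component sizes $\|\mathrsfs{S}_{j}\|$, invoking at the end the characterization of supersymmetric components in terms of norms of products recorded just after Proposition \ref{prop: swapping is isomorphism}.

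First I would show that the sizes $\|\mathrsfs{S}_{j}\|$ are uniformly bounded. Since $\mathrsfs{S}_{j}=((\partial\mathrsfs{A})^{-})^{2^{j}}$, its input automaton $(\mathrsfs{S}_{j})_{\mathcal{I}}=\mathcal{D}_{2^{j}}$ has the same underlying digraph, hence the same connected components, as $\mathrsfs{S}_{j}$; moreover every vertex of $\mathrsfs{S}_{j}$ lies in $A^{2^{j}}$, because the state set of $(\partial\mathrsfs{A})^{-}=(A,\wt{Q},\circ,\cdot)$ is $A$. So, by Theorem \ref{thm:schreier} i), every connected component of $\mathrsfs{S}_{j}$ is isomorphic to the Schreier graph of a subgroup $\St_{G}(v)$ of the finite group $G$, and therefore has at most $|G|$ vertices; thus $\|\mathrsfs{S}_{j}\|\le |G|$ for all $j$. (Alternatively, $\mathcal{S}(\mathrsfs{A})$ is finite, so $\|(\partial\mathrsfs{A})^{k}\|$ is bounded by Proposition \ref{prop: finiteness}, and adjoining the formal‑inverse edges to the reversible automaton $(\partial\mathrsfs{A})^{k}$ does not merge components.) Next I would check that the sequence $(\|\mathrsfs{S}_{j}\|)_{j\ge 0}$ is nondecreasing: since $\mathrsfs{S}_{j+1}=\mathrsfs{S}_{j}^{2}$ and, as noted in the proof of Proposition \ref{prop: norm of product}, $\|(\mathrsfs{A}\mathrsfs{B},(p,q))\|\ge\|(\mathrsfs{A},p)\|$ always holds, picking a vertex $p$ in a maximal component of $\mathrsfs{S}_{j}$ and applying this to the vertex $(p,p)$ of $\mathrsfs{S}_{j}^{2}$ gives $\|\mathrsfs{S}_{j+1}\|\ge\|(\mathrsfs{S}_{j}^{2},(p,p))\|\ge\|(\mathrsfs{S}_{j},p)\|=\|\mathrsfs{S}_{j}\|$. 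Being nondecreasing and bounded, the sequence is eventually constant: there is $n$ with $\|\mathrsfs{S}_{j}\|=M$ for all $j\ge n$, where $M:=\|\mathrsfs{S}_{n}\|$.

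Finally I would argue as follows: fix $i\ge n$ and let $(\mathrsfs{S}_{i},q)$ be a maximal component, so $\|(\mathrsfs{S}_{i},q)\|=M$. For any two vertices $p_{1},p_{2}$ of $(\mathrsfs{S}_{i},q)$ one has $\|(\mathrsfs{S}_{i},p_{1})\|=M$, hence
$$
M=\|(\mathrsfs{S}_{i},p_{1})\|\le\|(\mathrsfs{S}_{i}^{2},(p_{1},p_{2}))\|=\|(\mathrsfs{S}_{i+1},(p_{1},p_{2}))\|\le\|\mathrsfs{S}_{i+1}\|=M,
$$
so $\|(\mathrsfs{S}_{i}\mathrsfs{S}_{i},(p_{1},p_{2}))\|=\|(\mathrsfs{S}_{i},q)\|$ for every vertex $(p_{1},p_{2})$ of $(\mathrsfs{S}_{i},q)$; by the characterization of supersymmetry recorded after Proposition \ref{prop: swapping is isomorphism}, $(\mathrsfs{S}_{i},q)$ is supersymmetric.

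The only step that is not essentially mechanical is the uniform bound: it rests on the geometric dictionary of Theorem \ref{thm:schreier}, which identifies every connected component of a power of the enriched dual with a Schreier graph of a finite‑index subgroup of $G$, together with the fact that all such components are based at words of $A^{k}$. Everything after that is the monotonicity observation and the already‑established translation between the ``swapping invariant'' / supersymmetric vocabulary and equalities of component norms; one should also keep in mind that supersymmetry is defined for invertible transducers, which is the setting tacitly in force here.
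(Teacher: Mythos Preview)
Your proof is correct and follows essentially the same approach as the paper: use finiteness of $G$ to bound $\|\mathrsfs{S}_{j}\|$, combine with monotonicity to get eventual stabilization, and then read off supersymmetry of maximal components from the norm characterization recorded after Proposition~\ref{prop: swapping is isomorphism}. The paper is terser---it simply invokes Proposition~\ref{prop: finiteness} for the bound and Proposition~\ref{prop: norm of product invertible} for the conclusion---whereas you spell out the boundedness via Theorem~\ref{thm:schreier} and the monotonicity explicitly, but the argument is the same.
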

\begin{proof}
Since $\|\mathrsfs{S}_{j+1}\|\ge \|\mathrsfs{S}_{j}\|$, by Proposition \ref{prop: finiteness} there is an integer $n$ such $\|\mathrsfs{S}_{i}\|= \|\mathrsfs{S}_{n}\|$ for all $i\ge n$. Thus for any maximal connected components $(\mathrsfs{S}_{i},q)$ we get $\| (\mathrsfs{S}_{i},q)\|=\|\mathrsfs{S}_{n}\|$ for all $i\ge n$. In particular for any pair $(p_{1},p_{2})$ of $(\mathrsfs{S}_{i},q)$ we get:
$$
\| \left(\mathrsfs{S}_{i+1},(p_{1},p_{2})\right)\|=\| \left(\mathrsfs{S}_{i},p_{1}\right)\|
$$
Thus, by Proposition \ref{prop: norm of product invertible} $(\mathrsfs{S}_{i},q)$ has the swapping invariant property with respect to the pair $(p_{1},p_{2})$, i.e. $(\mathrsfs{S}_{i},q)$ is supersymmetric.
\end{proof}

Using the previous results we obtain an alternative proof of the following corollary.
\begin{cor}\cite[Corollary 22]{Pic12}\label{cor:piccantino}
Let $\mathrsfs{A}$ be an $RI$-transducer. If $\mathrsfs{A}$ is not bireversible, then $\mathcal{G}(\mathrsfs{A})$ is infinite.
\end{cor}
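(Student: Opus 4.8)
The plan is to argue by contraposition using the machinery of symmetric powers $\mathrsfs{S}_j$ developed above. Assume $\mathrsfs{A}$ is an $RI$-transducer with $G=\mathcal{G}(\mathrsfs{A})$ finite; I will show that $\mathrsfs{A}$ must then be bireversible. Since $\mathrsfs{A}$ is an $RI$-transducer, by Proposition \ref{prop: dual prop}(ii) its dual $\partial\mathrsfs{A}$ is also an $RI$-transducer, and by Lemma \ref{lemma: enriching} the enriched dual $\mathrsfs{S}_0=(\partial\mathrsfs{A})^{-}$ is a reversible inverse transducer; moreover it is invertible precisely when $\partial\mathrsfs{A}$ is bireversible, i.e. (by Proposition \ref{prop: dual prop}(iii)) precisely when $\mathrsfs{A}$ is bireversible. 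So the goal reduces to: if $G$ is finite, then $\mathrsfs{S}_0$ is bireversible.

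First I would invoke Proposition \ref{prop: maximal are supersymmetric}: since $G$ is finite, there is an $n$ such that for all $i\ge n$ the maximal components of $\mathrsfs{S}_i$ are supersymmetric. Next, by Proposition \ref{prop: finiteness} the norms $\|\mathrsfs{S}_i\|$ stabilize, say at $\|\mathrsfs{S}_n\|$. Now suppose for contradiction that $\mathrsfs{S}_0$ is not bireversible. By Lemma \ref{lem: charact bireav powers}, non-bireversibility propagates: $\mathrsfs{S}_j$ non-bireversible implies $\mathrsfs{S}_{j+1}$ non-bireversible, so $\mathrsfs{S}_i$ is non-bireversible for every $i\ge 0$. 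The key point is to locate the non-bireversibility witness inside a \emph{maximal} component. Since $\mathrsfs{S}_i$ is reversible (Proposition \ref{prop: product reversible}) but not bireversible, the output automaton $(\mathrsfs{S}_i)_{\mathcal{O}}$ fails to be deterministic at some component, giving two distinct edges $q\mapright{a|b}p'$, $q'\mapright{c|b}p'$ with $q\ne q'$; by taking $i$ large and using that in a reversible machine all components have the same structure under the input action, or by directly passing to the product with a maximal component via the second statement of Lemma \ref{lem: charact bireav powers}, I can assume this witness lies in a maximal component $(\mathrsfs{S}_i,q)$ for all large $i$.

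The contradiction then comes from confronting non-bireversibility of a maximal component with its supersymmetry. If $(\mathrsfs{S}_i,q)$ is supersymmetric, then $\|(\mathrsfs{S}_{i+1},(p_1,p_2))\|=\|(\mathrsfs{S}_i,p_1)\|$ for every pair $(p_1,p_2)$; but the two distinct edges $q\mapright{a|b}p'$, $q'\mapright{c|b}p'$ in $(\mathrsfs{S}_i,q)$ together with an edge $s\mapright{b|d}s'$ in $(\mathrsfs{S}_i,s)$ produce distinct edges $(p,s)\mapright{a|d}(p',s')$ and $(t,s)\mapright{c|d}(p',s')$ in $(\mathrsfs{S}_{i+1},(q,s))$, which by the ``norm of product'' reasoning behind Proposition \ref{prop: norm of product invertible} forces $\|(\mathrsfs{S}_{i+1},(q,s))\|>\|(\mathrsfs{S}_i,q)\|$ — contradicting stabilization of the maximal norm, provided $(q,s)$ itself sits in a maximal component. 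So the final bookkeeping step is to choose $s$ so that $(\mathrsfs{S}_{i+1},(q,s))$ is maximal; by supersymmetry every component $(\mathrsfs{S}_i,q)$ of maximal norm satisfies $(\mathrsfs{S}_i)_{\mathcal{I}}$ isomorphic across vertices, so one can pick $s$ in a maximal component of $\mathrsfs{S}_i$ with the required incoming $b$-edge, and the product with a maximal component is maximal. This yields $\|\mathrsfs{S}_{i+1}\|>\|\mathrsfs{S}_i\|=\|\mathrsfs{S}_n\|$ for $i\ge n$, the desired contradiction, whence $\mathrsfs{S}_0$ is bireversible and $\mathrsfs{A}$ is bireversible.

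I expect the main obstacle to be exactly this last localization step: ensuring the non-bireversibility witness and the auxiliary vertex $s$ can simultaneously be placed in maximal components so that the ``extra collapse'' in $\mathrsfs{S}_{i+1}$ genuinely occurs \emph{inside} a maximal component (and is not absorbed by smaller components whose growth does not affect $\|\mathrsfs{S}_{i+1}\|$). The cleanest route is probably to apply the second sentence of Lemma \ref{lem: charact bireav powers} inductively — repeatedly producting the bad component with a fixed maximal component — so that after finitely many symmetric-power steps the bad pattern and the maximal component coincide, at which point supersymmetry for large $i$ is contradicted directly.
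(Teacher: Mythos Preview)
Your overall architecture (contraposition, propagate non-bireversibility to a maximal supersymmetric component, contradiction) matches the paper's, but the contradiction step has a genuine gap.

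You claim that the witness edges $(q,s)\mapright{a|d}(p',s')$ and $(q',s)\mapright{c|d}(p',s')$ in $\mathrsfs{S}_{i+1}$ ``by the norm-of-product reasoning behind Proposition~\ref{prop: norm of product invertible}'' force $\|(\mathrsfs{S}_{i+1},(q,s))\|>\|(\mathrsfs{S}_i,q)\|$. They do not. Look again at the proof of Proposition~\ref{prop: norm of product}: norm growth occurs exactly when the component of $(p,q)$ contains two states with the \emph{same first coordinate and different second coordinates}. Your witness produces two states $(q,s)$, $(q',s)$ with \emph{different first coordinates and the same second coordinate}; since $q$ and $q'$ already lie in the same component of $\mathrsfs{S}_i$, this is perfectly compatible with $\|(\mathrsfs{S}_{i+1},(q,s))\|=\|(\mathrsfs{S}_i,q)\|$. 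Non-bireversibility is a statement about the \emph{output} automaton failing co-determinism, and Propositions~\ref{prop: norm of product}--\ref{prop: norm of product invertible} are purely about input-language containments; there is no direct bridge from one to the other via norm growth.

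The paper closes the argument differently, using Proposition~\ref{prop: swapping is isomorphism}: supersymmetry of $(\mathrsfs{S}_j,q)$ gives in particular swapping invariance at $(p',p')$, hence $((\mathrsfs{S}_j)_{\mathcal{O}},p')\simeq ((\mathrsfs{S}_j)_{\mathcal{I}},p')$, so $((\mathrsfs{S}_j)_{\mathcal{O}},p')$ is an \emph{inverse} graph. But the non-bireversibility witness gives two edges $q\mapright{b}p'$, $q'\mapright{b}p'$ with $q\neq q'$ in $(\mathrsfs{S}_j)_{\mathcal{O}}$; by involutivity these yield $p'\mapright{b^{-1}}q$ and $p'\mapright{b^{-1}}q'$, contradicting determinism. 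This is the missing ingredient: you need Proposition~\ref{prop: swapping is isomorphism} (or an equivalent) to convert the output-side defect into a contradiction with supersymmetry, rather than trying to squeeze a norm inequality out of it.
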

\begin{proof}
Suppose contrary to our claim that $G=\mathcal{G}(\mathrsfs{A})$ is finite. Let $n$ be the integer of Proposition \ref{prop: maximal are supersymmetric} such that for all $i\ge n$ the maximal connected components of $\mathrsfs{S}_{i}$ are supersymmetric. By Lemma \ref{lem: charact bireav powers} there is an integer $j\ge n$ and a maximal connected component $(\mathrsfs{S}_{j},q)$ which is supersymmetric and not bireversible, hence there are two edges $p\mapright{a|b}p'$,$t\mapright{c|b}p'$ in $(\mathrsfs{S}_{j},q)$. By Proposition \ref{prop: swapping is isomorphism} $\left((\mathrsfs{S}_{j})_{\mathcal{I}},p'\right)\simeq \left((\mathrsfs{S}_{j})_{\mathcal{O}},p'\right)$, i.e. $\left((\mathrsfs{S}_{j})_{\mathcal{O}},p'\right)$ is an inverse automaton. However, in $(\mathrsfs{S}_{j})_{\mathcal{O}}$ there are the two edges $p\mapright{b}p'$,$t\mapright{b}p'$, a contradiction.
\end{proof}

\section{Some applications to Cayley types of machines}\label{sec: cayley}
In this section we show some applications of the results obtained by the approach that we have developed in Section \ref{sec: dual}. We focus our attention to duals of transducers whose input automata are Cayley graphs, this class contains the Cayley machines, already studied from the spectral point of view in \cite{KaSiSte}. In the same flavor of \cite{DaRo2013} we consider ``colorings'' of the Cayley graph $\mathcal{C}^{+}(G,A)$ of the finite group $G$ with set of generators $A$ into transducers. Since $\mathcal{C}^{+}(G,A)$ is a reversible automaton all these colorings give rise to reversible transducers, hence by Proposition \ref{prop: dual prop} the duals of these machines are all invertible, and so they all define a group. In this section we consider the following Cayley machines types of transducers.
\begin{itemize}
\item The (``usual'') Cayley machine $\mathrsfs{C}(G)=(G,G,\circ,\cdot)$ with transitions of the form $g\longfr{x}{gx}gx$ for all $g,x\in G$.
\item The palindrome Cayley machine $\mathrsfs{C}_{p}(G)=(G,G,\circ,\cdot)$ with transitions of the form $g\longfr{x}{x^{-1}}gx$.
\item The identity Cayley machine $\mathrsfs{C}_{I}(G)=(G,G,\circ,\cdot)$ with transitions of the form $g\longfr{x}{x}gx$ for all $g,x\in G$.
\end{itemize}
Note that $\mathrsfs{C}_{p}(G),\mathrsfs{C}_{I}(G)$ are bireversible.
Let $G=F_{G}/H$, when dealing with the enriched dual to avoid confusion between inverses of $G$ and formal inverses of $G^{-1}$, we shall write words in $\wt{G}^{*}$ using parenthesis. For instance, $(u^{-1})$ represents the element $u^{-1}$ of $G$, while $(u)^{-1}$ is the element of $G^{-1}$ which is the formal inverse of $u$. We denote by $e$ the identity of the group $G$. For a word $(u_{1})^{e_{1}}\ldots (u_{k})^{e_{k}}=u\in \wt{G}^{*}$ with $e_{1},\ldots e_{k}\in\{1,-1\}$ we put $\hat{u}=u_{1}^{e_{1}}\ldots u_{k}^{e_{k}}\in G^{*}$. Note that the following holds
\begin{equation}\label{eq: positive cycles}
u\in\sigma^{-1}(H)\quad \Longleftrightarrow \quad u\in L(\mathrsfs{C}(G)^{-},g)\quad \Longleftrightarrow \quad u\in L(\mathcal{C}(G,G))\quad \Longleftrightarrow\quad \hat{u}=e
\end{equation}
\\
In \cite[Theorem 4.1]{SiSte} the authors describe the groups defined by Cayley machines, in particular it is proven that the semigroups defined by these machines are free. The following theorem proves the analogous fact for the dual of these machines.
\begin{theorem}\label{theo: freeness dual cayley}
For any non-trivial finite group $G$, then the semigroup $\mathcal{S}(\partial\mathrsfs{C}(G))$ is free.
\end{theorem}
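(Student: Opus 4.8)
The plan is to deduce the statement from the freeness of $\mathcal{S}(\mathrsfs{C}(G))$ proved in \cite[Theorem 4.1]{SiSte}, by exhibiting an explicit conjugacy between $\mathcal{S}(\partial\mathrsfs{C}(G))$ and $\mathcal{S}(\mathrsfs{C}(G))$ inside the monoid of self-maps of $G^{*}$. First I would write down the generators of $\mathcal{S}(\partial\mathrsfs{C}(G))$ explicitly. Dualizing the edge $g\longfr{x}{gx}gx$ of $\mathrsfs{C}(G)$ produces the edge $x\longfr{g}{gx}gx$ of $\partial\mathrsfs{C}(G)$, so from the state $x$ the input letter $g$ is read with output $gx$ and section state $gx$; unwinding the recursion for the $\circ$-action gives, for $w=g_{1}\ldots g_{k}$,
$$(\partial\mathrsfs{C}(G))_{x}(g_{1}\ldots g_{k})=(g_{1}x)(g_{2}g_{1}x)(g_{3}g_{2}g_{1}x)\cdots(g_{k}g_{k-1}\cdots g_{1}x),$$
i.e. the $i$-th output letter is $(g_{i}g_{i-1}\cdots g_{1})x$. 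In particular the $(\partial\mathrsfs{C}(G))_{x}$ ($x\in G$) are pairwise distinct, already on one-letter words. Recall, on the other side, that $\mathrsfs{C}(G)_{g}(g_{1}\ldots g_{k})=(gg_{1})(gg_{1}g_{2})\cdots(gg_{1}\cdots g_{k})$, whose $i$-th letter is $g(g_{1}g_{2}\cdots g_{i})$.

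The key step is the following identity. Let $\nu\colon G^{*}\to G^{*}$ be letterwise inversion, $\nu(g_{1}\ldots g_{k})=g_{1}^{-1}\ldots g_{k}^{-1}$; it is an involution of $G^{*}$. Then $\nu\circ(\partial\mathrsfs{C}(G))_{x}\circ\nu=\mathrsfs{C}(G)_{x^{-1}}$ for every $x\in G$: applying $\nu$ first replaces $g_{1}\ldots g_{k}$ by $g_{1}^{-1}\ldots g_{k}^{-1}$, then $(\partial\mathrsfs{C}(G))_{x}$ produces as $i$-th letter $(g_{i}^{-1}g_{i-1}^{-1}\cdots g_{1}^{-1})x=(g_{1}g_{2}\cdots g_{i})^{-1}x$, and a final $\nu$ turns this into $x^{-1}(g_{1}g_{2}\cdots g_{i})$, which is exactly the $i$-th letter of $\mathrsfs{C}(G)_{x^{-1}}(g_{1}\ldots g_{k})$. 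This is essentially the only computation, and the one delicate point is the product reversal introduced by passing to the dual.

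To conclude, conjugation by $\nu$, namely $\varphi\mapsto\nu\circ\varphi\circ\nu$, is an automorphism of the monoid of all self-maps of $G^{*}$, hence it preserves composition and, by the identity above, it carries the generating set $\{(\partial\mathrsfs{C}(G))_{x}:x\in G\}$ of $\mathcal{S}(\partial\mathrsfs{C}(G))$ bijectively onto the generating set $\{\mathrsfs{C}(G)_{y}:y\in G\}$ of $\mathcal{S}(\mathrsfs{C}(G))$ (because $x\mapsto x^{-1}$ is a bijection of $G$). Therefore it restricts to a semigroup isomorphism $\mathcal{S}(\partial\mathrsfs{C}(G))\cong\mathcal{S}(\mathrsfs{C}(G))$ matching the two natural generating sets, and since $\mathcal{S}(\mathrsfs{C}(G))$ is free on $\{\mathrsfs{C}(G)_{y}:y\in G\}$ by \cite[Theorem 4.1]{SiSte}, the semigroup $\mathcal{S}(\partial\mathrsfs{C}(G))$ is free on $\{(\partial\mathrsfs{C}(G))_{x}:x\in G\}$.

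A self-contained alternative, more in the spirit of the present paper, would be to invoke Theorem \ref{theo: relations semigroup} and show directly that the only colliding pair invariant under the $A$-action is the diagonal, using the section formula $(\partial\mathrsfs{C}(G))_{u}|_{g}=(\partial\mathrsfs{C}(G))_{\theta_{g}(u)}$, where $\theta_{g}$ is the length-preserving bijection of $G^{+}$ sending $x_{1}\ldots x_{m}$ to $(g\rho_{1})(g\rho_{2})\cdots(g\rho_{m})$ with $\rho_{j}=x_{m}x_{m-1}\cdots x_{j}$, together with the fact that the first output letter of $(\partial\mathrsfs{C}(G))_{u}$ on a letter records the reverse product $x_{m}\cdots x_{1}\in G$. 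On that route the main obstacle is to turn these data into an induction that genuinely decreases a complexity of the pair $(u,v)$ for a general (non-abelian) $G$ — which is exactly what the conjugacy argument sidesteps.
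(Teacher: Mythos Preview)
Your conjugacy argument is correct: the identity $\nu\circ(\partial\mathrsfs{C}(G))_{x}\circ\nu=\mathrsfs{C}(G)_{x^{-1}}$ checks out letter by letter, and since conjugation by the involution $\nu$ is an automorphism of the monoid of self-maps of $G^{*}$ carrying one generating set bijectively to the other, freeness of $\mathcal{S}(\mathrsfs{C}(G))$ from \cite[Theorem~4.1]{SiSte} transfers. You even get a bit more than the statement, namely an explicit isomorphism $\mathcal{S}(\partial\mathrsfs{C}(G))\cong\mathcal{S}(\mathrsfs{C}(G))$ matching the canonical generators.

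This is, however, a genuinely different route from the paper's. The paper argues directly via Theorem~\ref{theo: relations semigroup}: it takes a putative nontrivial colliding pair $(u,v)$ with $|u|=|v|$ minimal, observes that collision forces $u_{1}\cdots u_{n}=v_{1}\cdots v_{n}$ in $G$ (since the input automaton of $\mathrsfs{C}(G)$ is the Cayley graph), then acts by a state of the inverse machine $\mathrsfs{C}(G)^{-1}$, whose output replaces $u$ by the telescoping word $(g^{-1}u_{1})(u_{1}^{-1}u_{2})\cdots(u_{n-1}^{-1}u_{n})$; applying the product condition again forces $u_{n}=v_{n}$, and cancellation in the (invertible, hence cancellative) semigroup gives a shorter counterexample. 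So the ``obstacle'' you flag in your last paragraph is exactly what the paper overcomes, and the key idea you were missing is to use the $G^{-1}$-action (equivalently the inverse Cayley machine) rather than the $G$-action to isolate the last letter. Your approach is shorter and leans on \cite{SiSte}; the paper's is self-contained and serves as a showcase for Theorem~\ref{theo: relations semigroup}.
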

\begin{proof}
Let $\mathcal{S}(\partial\mathrsfs{C}(G))=G^{*}/\mathcal{R}$ we claim $\mathcal{R}$ is the identity relation. Indeed, assume by contradiction, that $\mathcal{R}$ is not the identity relation. By \cite[Lemma 2.7]{SiSte} we can assume that there is an element $(u,v)\in \mathcal{R}$ with $|u|=|v|$ and $u\neq v$. Furthermore, we can assume that $|u|$ is minimal among all the pairs $(s,t)\in \mathcal{R}$ with $|s|=|t|$. By Theorem \ref{theo: relations semigroup} we are seeking for a maximal subset $\mathcal{R}\subseteq G^{*}\times G^{*}$ of colliding pairs which is invariant for the action $G\overset{\cdot}{\curvearrowright} G^{*}$ in $\mathrsfs{C}(G)$. Note that since the input-automaton of $\mathrsfs{C}(G)$ is the Cayley automaton of $G$ with generating set $G$, then for any colliding pair $(s,w)$ with $s=s_{1}\ldots s_{m}$, $w=w_{1}\ldots w_{k}$ we get
\begin{equation}\label{eq: condition colliding cayley}
s_{1}\ldots s_{m}=w_{1}\ldots w_{k}
\end{equation}
Since the set $\mathcal{R}$ is invariant for the action $G\overset{\cdot}{\curvearrowright} G^{*}$ and $\mathrsfs{C}(G)$ is invertible, we have that $\mathcal{R}$ is also invariant for the action $(G)^{-1}\overset{\cdot}{\curvearrowright} G^{*}$ given by the inverse automaton $\mathrsfs{C}(G)^{-1}=(G^{-1},G,\circ, \cdot)$ with transitions $(g)^{-1}\vlongfr{y}{g^{-1}y} (y)^{-1}$. Thus, in particular for the chosen pair $(u,v)$ with $u=u_{1}\ldots u_{n}$, $v=v_{1}\ldots v_{n}$ and any $g\in G$ the pair $(u',v')$ with
$$
v'=(g)^{-1}\cdot \left(v_{1}\ldots v_{n}\right)=(g^{-1}v_{1})(v_{1}^{-1}v_{2})(v_{2}^{-1}v_{3})\ldots (v_{n-1}^{-1}v_{n})
$$
$$
u'=(g)^{-1}\cdot \left(u_{1}\ldots u_{n}\right)=(g^{-1}u_{1})(u_{1}^{-1}u_{2})(u_{2}^{-1}u_{3})\ldots (u_{n-1}^{-1}u_{n})
$$
belongs to $\mathcal{R}$. Hence, by (\ref{eq: condition colliding cayley}) we get $g^{-1}v_{n}=g^{-1}u_{n}$, i.e. $v_{n}=u_{n}$. Since $\partial\mathrsfs{C}(G)$ is invertible, then $\mathcal{S}(\partial\mathrsfs{C}(G))$ is a cancellative monoid, thus since $u_{1}\ldots u_{n}=v_{1}\ldots v_{n}$ in $\mathcal{S}(\partial\mathrsfs{C}(G))$, we get $u'=u_{1}\ldots u_{n-1}=v_{1}\ldots v_{n-1}=v'$ in $\mathcal{S}(\partial\mathrsfs{C}(G))$, i.e. $(u',v')\in \mathcal{R}$ with $|u'|<|u|$ and $u'\neq v'$, a contradiction, whence $\mathcal{R}$ is trivial.
\end{proof}
We now make some consideration regarding the group $\mathcal{G}(\partial\mathrsfs{C}(G))=F_{G}/N$. In this case we consider the automaton $\mathrsfs{C}(G)^{-}=(A,\wt{G},\circ,\cdot)$, and by Theorem \ref{theo: charact relations} we have to look for the maximal subset $\mathcal{N}\subseteq \cap_{g\in G}L(\mathrsfs{C}(G)^{-},g)$ which is stable for the action $G\overset{\cdot}{\curvearrowright} \wt{G}^{*}$. The set $\mathcal{N}$ represents the set of relations for the group $\mathcal{G}(\partial\mathrsfs{C}(G))$. By the previous Theorem \ref{theo: freeness dual cayley} we immediately obtain the following corollary.
\begin{cor}\label{cor: not positive occurrences}
With the above notations $\mathcal{N}\cap G^{+}=\emptyset$.
\end{cor}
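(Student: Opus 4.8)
The plan is to read the statement off directly from the freeness of $\mathcal{S}(\partial\mathrsfs{C}(G))$ proved in Theorem \ref{theo: freeness dual cayley}: a nontrivial free semigroup has no idempotents, whereas a positive relator of $\mathcal{G}(\partial\mathrsfs{C}(G))$ would produce the identity element sitting inside the (free) positive cone of that group.

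First I would translate what membership of a positive word in $\mathcal{N}$ means. Suppose $u = (u_{1})\cdots(u_{n}) \in \mathcal{N} \cap G^{+}$ with $n \ge 1$. By Theorem \ref{theo: charact relations} we have $N = \oo{\mathcal{N}}$, hence $\sigma(u) \in N$, i.e. $u$ is a relator of $\mathcal{G}(\partial\mathrsfs{C}(G)) = F_{G}/N$. Since $\partial\mathrsfs{C}(G)$ is invertible (Proposition \ref{prop: dual prop}), each generator $(\partial\mathrsfs{C}(G))_{g}$ is an automorphism of the tree $G^{*}$, and the subsemigroup of $\mathcal{G}(\partial\mathrsfs{C}(G))$ generated by these automorphisms is exactly $\mathcal{S}(\partial\mathrsfs{C}(G))$; therefore the image of the positive word $u_{1}\cdots u_{n}$ in $\mathcal{S}(\partial\mathrsfs{C}(G))$ coincides with the identity element of $\mathcal{G}(\partial\mathrsfs{C}(G))$.

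Next I would invoke Theorem \ref{theo: freeness dual cayley}: $\mathcal{S}(\partial\mathrsfs{C}(G))$ is the free semigroup on the nonempty set $\{(\partial\mathrsfs{C}(G))_{g} : g \in G\}$ (nonempty because $G$ is nontrivial). Then the two positive words $u_{1}\cdots u_{n}$ and $u_{1}\cdots u_{n}u_{1}\cdots u_{n}$ both represent the identity automorphism, hence the same element of $\mathcal{S}(\partial\mathrsfs{C}(G))$; by freeness they must coincide as words, which is absurd since their lengths are $n$ and $2n$ with $n \ge 1$. (Equivalently, the identity element is idempotent while a free semigroup contains no idempotent.) Hence no such $u$ exists and $\mathcal{N} \cap G^{+} = \emptyset$. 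The only slightly delicate point, which I would spell out, is the identification of the positive cone of $\mathcal{G}(\partial\mathrsfs{C}(G))$ with $\mathcal{S}(\partial\mathrsfs{C}(G))$, so that a positive group-relator really is the identity of the free semigroup; apart from that, the corollary is an immediate repackaging of Theorem \ref{theo: freeness dual cayley} and there is no genuine obstacle.
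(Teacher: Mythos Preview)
Your argument is correct and matches the paper's approach: the paper simply states that the corollary follows ``immediately'' from Theorem~\ref{theo: freeness dual cayley}, and your idempotent argument is exactly the natural way to unpack that word. The one point you flag as delicate---that the positive cone of $\mathcal{G}(\partial\mathrsfs{C}(G))$ is $\mathcal{S}(\partial\mathrsfs{C}(G))$---is clear from the definitions (both are generated by the same automorphisms of the tree), so there is no gap.
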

The natural question is if there are non-trivial relations involving necessarily elements from $G^{-1}$.  A word $(u_{1})^{e_{1}}\ldots (u_{2k})^{e_{2k}}\in \wt{G}^{*}$ with $e_{1}=1$, $e_{i+1}=-e_{i}$ for $i=1,\ldots, 2k-1$ is called \emph{alternating}. We have the following lemma.
\begin{lemma}\label{lem: reduced cayley}
With the above notation, $\oo{g\cdot (u_{1})(u_{2})^{-1}}=1$ for any $g\in G$. In particular, $\oo{g\cdot u}=1$ for any alternating word.
\end{lemma}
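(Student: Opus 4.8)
The plan is to unwind the definition of the action $G\overset{\cdot}{\curvearrowright}\wt G^{*}$ on the enriched dual $\mathrsfs{C}(G)^{-}=(A,\wt{G},\circ,\cdot)$ and simply compute the label of the relevant path. Recall from Lemma~\ref{lemma: enriching} and the definition of $\mathrsfs{C}(G)$ that the edges of $\mathrsfs{C}(G)^{-}$ over the positive alphabet are of the form $g\longfr{x}{gx}gx$ (so that acting by an input letter $x\in A$ sends the state $g$ to $gx$ and produces output the generator $(gx)\in G$ read as an element of $\wt G$), while the formal-inverse edges, coming from $\mathrsfs{C}(G)^{-1}$, are $g\vlongfr{x}{g^{-1}x}g^{-1}x$ — equivalently, reading the generator $(g)^{-1}\in G^{-1}$ from state $g$ lands at $g^{-1}$ and the induced action on states by $x\in A$ is $g\mapsto g^{-1}x$ exactly as in the proof of Theorem~\ref{theo: freeness dual cayley}. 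Here I must be a little careful to use the \emph{dual} action $A\overset{\cdot}{\curvearrowright}\wt G^{*}$ of $\partial\mathrsfs{C}(G)$, i.e. the letter $x\in A$ acts on a \emph{word} in $\wt G^{*}$ by feeding it through $\mathrsfs{C}(G)^{-}$ and reading off the output word over $\wt G$.

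First I would compute $g\cdot (u_1)(u_2)^{-1}$ directly. Feeding the length-two word $(u_1)(u_2)^{-1}$ into $\mathrsfs{C}(G)^{-}$ from the state $g$: the first letter is the positive generator $(u_1)$, which takes the state along the edge $g\longfr{\,\cdot\,}{\,gu_1\,}\,$? — no: one reads $(u_1)$ as an \emph{output symbol} and tracks the input. The cleaner route is to use the coupled-action recursion for $\partial\mathrsfs{C}(G)$ displayed in Section~\ref{sec: dual}: $g\cdot(w_1 w_2)=\big(g\cdot w_1\big)\big((g\circ w_1)\cdot w_2\big)$ together with $g\circ(w_1w_2)=(g\circ w_1)\circ w_2$. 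Applying the transition rules of $\mathrsfs{C}(G)$ and its inverse, $g\circ (u_1)=$ the state reached, and $g\cdot(u_1)$ the single output letter; then the second coordinate $(u_2)^{-1}$ is processed from the new state. Carrying this out, the two output letters will be mutually inverse generators of $\wt G$ — concretely, something of the shape $(h)(h)^{-1}$ for the appropriate $h\in G$ depending on $g,u_1,u_2$ — so that the reduced word $\oo{g\cdot(u_1)(u_2)^{-1}}=1$. The ``in particular'' clause then follows by induction on the half-length $k$ of the alternating word $(u_1)^{e_1}\cdots(u_{2k})^{e_{2k}}$: write it as $(u_1)(u_2)^{-1}\cdot w'$ where $w'$ is again alternating (after relabeling), use the recursion $g\cdot(w_1 w')=(g\cdot w_1)\big((g\circ w_1)\cdot w'\big)$ to split off a factor that reduces to $1$ by the base case, and invoke the inductive hypothesis on the tail from the updated state $g\circ w_1$; since $\oo{uv}=\oo{\,\oo u\,\oo v\,}$, the whole product reduces to $1$.

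The only genuine subtlety — and the step I expect to be the main obstacle — is keeping the two layers of inversion straight: the inverses ${}^{-1}$ internal to the group $G$ (which appear inside the states and inside the output generators produced by the $\mathrsfs{C}(G)^{-1}$ part) versus the \emph{formal} inverses ${}^{-1}$ in $\wt G^{*}$ that get cancelled by the free-reduction bar $\oo{\phantom{x}}$. The notation convention fixed just before Corollary~\ref{cor: not positive occurrences} — writing $(u^{-1})$ for the element $u^{-1}\in G$ and $(u)^{-1}$ for the formal inverse in $G^{-1}$ — is exactly what makes this bookkeeping possible, and the computation should be arranged so that each adjacent pair of output letters literally has the form $(h)(h)^{-1}$, which is the generating relation of the free-group congruence $\sim$ and hence reduces. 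Everything else is a routine application of the recursive formulas for the coupled-action of $\partial\mathrsfs{C}(G)$ and an induction; no cleverness beyond careful substitution is needed.
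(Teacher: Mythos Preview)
Your approach is exactly the paper's: a direct computation in $\mathrsfs{C}(G)^{-}$ yielding $g\cdot(u_1)(u_2)^{-1}=(gu_1)(gu_1)^{-1}$, which freely reduces to $1$, and then the evident induction for alternating words via the coupled-action recursion. One notational slip to fix before carrying it out: your description of the ``formal-inverse edges'' conflates the inverse transducer $\mathrsfs{C}(G)^{-1}$ (states in $G^{-1}$, alphabet $G$, edges $(g)^{-1}\vlongfr{y}{g^{-1}y}(y)^{-1}$) with the negative-letter edges of the \emph{enriched} transducer $\mathrsfs{C}(G)^{-}$ (states in $G$, alphabet $\wt G$, edges $h\vlongfr{(x)^{-1}}{(h)^{-1}}hx^{-1}$ coming from Lemma~\ref{lemma: enriching}); it is the latter you need here, and once you use the correct edge the computation is the single line the paper gives.
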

\begin{proof}
In $\mathrsfs{C}(G)^{-}$ a transition with its inverse is given by $(gx^{-1})\vlongfr{x}{g}g$, $g\vvvlongfr{(x)^{-1}}{(g)^{-1}}(gx^{-1})$. Therefore, a simple computation yields to
$$
\oo{g\cdot(u_{1})(u_{2})^{-1}}=\oo{(gu_{1})(gu_{1})^{-1}}=1
$$
The last statement is a consequence of this last fact.
\end{proof}
Given a word $u=(u_{1})(u_{2})\ldots (u_{2k-1})(u_{2k})\in G^{*}$, we define the alternating map by $\alpha(u)=(u_{1})(u_{2}^{-1})^{-1}\ldots (u_{2k-1})(u_{2k}^{-1})^{-1}$. Note that $\alpha(u)$ is reduced if $u$ does not contain any factor $(v)(v^{-1})$. The following proposition shows that $\mathcal{G}(\partial\mathrsfs{C}(G))$ is in general not free.
\begin{prop}
With the above notation, for any $u\in \sigma^{-1}(H)$ with $|u|$ even, we have $\alpha(u)\in \mathcal{N}$. \end{prop}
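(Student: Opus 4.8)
The plan is to exhibit an explicit subset of $\sigma^{-1}(H)$ that contains every $\alpha(u)$ and is invariant for the action $G\overset{\cdot}{\curvearrowright}\wt{G}^{*}$, and then to invoke the maximality of $\mathcal{N}$ from Theorem \ref{theo: charact relations}, recalling that by (\ref{eq: positive cycles}) one has $\bigcap_{g\in G}L(\mathrsfs{C}(G)^{-},g)=\sigma^{-1}(H)$. Put
$$
\mathcal{A}=\{\,\alpha(w)\;:\;w=(w_{1})\cdots(w_{2m})\in G^{*},\ \hat w=e\,\}.
$$
Since $\alpha$ is defined only on even-length positive words, the hypothesis on $u$ says precisely that $u\in G^{*}$ has even length and $\hat u=e$, i.e. that $\alpha(u)\in\mathcal{A}$. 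Moreover $\mathcal{A}\subseteq\sigma^{-1}(H)$: the element of $G$ represented by $\alpha(w)$ is $w_{1}(w_{2}^{-1})^{-1}w_{3}\cdots(w_{2m}^{-1})^{-1}=w_{1}w_{2}\cdots w_{2m}=\hat w=e$, so (\ref{eq: positive cycles}) applies. Hence it suffices to prove that $\mathcal{A}$ is invariant.

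First I would check invariance under a single letter $g\in G$. Fix $\alpha(u)\in\mathcal{A}$ with $u=(u_{1})\cdots(u_{2k})$ and run the letters of $\alpha(u)$ through $\mathrsfs{C}(G)^{-}$ from the state $g$, using the transitions recalled in Lemma \ref{lem: reduced cayley}: at a state $s$, reading $x\in G$ outputs $sx$ and moves to $sx$, whereas reading $(x)^{-1}\in G^{-1}$ outputs the formal inverse $(s)^{-1}$ and moves to $sx^{-1}$. A straightforward induction then gives that after the $(2i-1)$-st letter the current state is $gu_{1}\cdots u_{2i-1}$ with last emitted letter $gu_{1}\cdots u_{2i-1}$, while after the $2i$-th letter the state is $gu_{1}\cdots u_{2i}$ with last emitted letter the formal inverse $(gu_{1}\cdots u_{2i-1})^{-1}$. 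Hence
$$
g\cdot\alpha(u)=(gu_{1})\,(gu_{1})^{-1}\,(gu_{1}u_{2}u_{3})\,(gu_{1}u_{2}u_{3})^{-1}\cdots(gu_{1}\cdots u_{2k-1})\,(gu_{1}\cdots u_{2k-1})^{-1}=\alpha(v),
$$
where $v\in G^{*}$ is the length-$2k$ word whose $(2i-1)$-st letter is $gu_{1}\cdots u_{2i-1}$ and whose $2i$-th letter is the inverse in $G$ of $gu_{1}\cdots u_{2i-1}$; a telescoping product shows $\hat v=e$, so $g\cdot\alpha(u)\in\mathcal{A}$.

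Invariance under all of $G^{*}$ then follows from invariance under single letters, exactly as remarked in the proof of Theorem \ref{theo: charact relations}. Thus $\mathcal{A}$ is a $G\overset{\cdot}{\curvearrowright}\wt{G}^{*}$-invariant subset of $\sigma^{-1}(H)=\bigcap_{g}L(\mathrsfs{C}(G)^{-},g)$, so $\mathcal{A}\subseteq\mathcal{N}$ by maximality, and in particular $\alpha(u)\in\mathcal{N}$. (One can instead bypass the explicit $v$: $\alpha(u)$ is an alternating word, reading letters through $\mathrsfs{C}(G)^{-}$ does not change the $\pm$-pattern of a word, so every $s\cdot\alpha(u)$ with $s\in G^{*}$ is again alternating, and Lemma \ref{lem: reduced cayley} then gives $\oo{s\cdot\alpha(u)}=1\in H$ as soon as $|s|\ge1$, the case $s=1$ being covered by $\mathcal{A}\subseteq\sigma^{-1}(H)$.) The only real care needed is to keep the two meanings of ``inverse'' apart — the group inverse inside $G$ versus the formal inverse in $G^{-1}$ — when writing the transitions and output words of $\mathrsfs{C}(G)^{-}$; I expect this bookkeeping to be the main (and essentially only) obstacle, after which everything is a direct computation.
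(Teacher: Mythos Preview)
Your proof is correct and follows essentially the same approach as the paper's: both verify that $\alpha(u)\in\sigma^{-1}(H)=\bigcap_{g}L(\mathrsfs{C}(G)^{-},g)$ via (\ref{eq: positive cycles}), and then use Lemma~\ref{lem: reduced cayley} (your explicit computation of $g\cdot\alpha(u)$ is exactly this lemma unpacked) to see that the $G$-orbit of $\alpha(u)$ stays in $\sigma^{-1}(H)$, whence $\alpha(u)\in\mathcal{N}$ by maximality. Your parenthetical alternative is in fact precisely the paper's argument; the only difference is that you package the orbit as an explicit set $\mathcal{A}$, which is cosmetic.
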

\begin{proof}
By (\ref{eq: positive cycles}) $\sigma^{-1}(H)= L(\mathrsfs{C}(G)^{-},g)$ holds for all $g\in G$ and the alternating map preserves this inclusions, i.e. $\alpha(u)\in L(\mathrsfs{C}(G)^{-},g)$ for any $u\in \sigma^{-1}(H)$. Furthermore, if $|u|$ is even, $\alpha(u)$ is alternating. Hence, by Lemma \ref{lem: reduced cayley}, $\oo{h\cdot \alpha(u)}=1$ for all $h\in G^{*}$, i.e. $\alpha(u)\in \mathcal{N}$.
\end{proof}
For a word $u\in \wt{G}^{*}$ we may write $[u]_{c}=\{xy:yx=u\}$ for the set of all the words that can be obtained by a cyclic shift of $u$. In the next proposition we provide a recursive way to build the relations $\mathcal{N}$ that defines the group $\mathcal{G}(\partial\mathrsfs{C}(G))$. We shall use the two following subsets
$$
\mathcal{N}_{2k}=\mathcal{N}\cap \wt{G}^{2k}, \quad \mathcal{V}_{2k}=\{v\in \wt{G}^{2k}:\forall g\in G, g\cdot v\in \mathcal{N}_{2k}\}
$$
Note that given $\mathcal{N}_{2k}$ it is possible to (algorithmically) compute $\mathcal{V}_{2k}$ by
$$
\mathcal{V}_{2k}=\bigcap_{g\in G}\{u\in \wt{G}^{2k}: g\cdot u\in \mathcal{N}_{2k}\}
$$
Therefore, the previous equality and the following proposition provides a recursive way to compute the set $\mathcal{N}$.
\begin{prop}
With the above notation:
$$
\mathcal{N}_{2(k+1)}=\bigcup_{\substack{x,y\in G\\v\in \mathcal{V}_{2k}\\xy^{-1}\hat{v}=e}}\left[(x)(y)^{-1}v\right]_{c}
$$
\end{prop}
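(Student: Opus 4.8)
I would prove the two inclusions separately, working throughout in the enriched dual $\mathrsfs{C}(G)^{-}=(G,\wt{G},\circ,\cdot)$ and using the identification $\mathcal{N}=\sigma^{-1}(N)$ supplied by Theorem \ref{theo: charact relations}; in particular $\mathcal{N}$ is a submonoid of $\wt{G}^{*}$, closed under formal inversion, and --- since $N$ is normal in $F_{G}$ --- closed under cyclic permutation, so the classes $[\cdot]_{c}$ make sense inside $\mathcal{N}$. By (\ref{eq: positive cycles}) we have $\bigcap_{g\in G}L\left(\mathrsfs{C}(G)^{-},g\right)=\{w\in\wt{G}^{*}:\hat{w}=e\}$, and since $\mathcal{N}$ is the maximal subset of this set invariant under $G\overset{\cdot}{\curvearrowright}\wt{G}^{*}$, it is precisely the union of all orbits lying inside it. Unravelling this yields the criterion used on both sides:
$$
w_{0}\in\mathcal{N}\quad\Longleftrightarrow\quad\hat{w_{0}}=e\ \text{ and }\ g\cdot w_{0}\in\mathcal{N}\ \text{ for every }g\in G.
$$
The single computational input is that, reading a word through $\mathrsfs{C}(G)^{-}$ one letter at a time exactly as in Lemma \ref{lem: reduced cayley}, one obtains
$$
g\cdot\left((x)(y)^{-1}v\right)=(gx)(gx)^{-1}\left[(gxy^{-1})\cdot v\right]\qquad(g,x,y\in G,\ v\in\wt{G}^{*}),
$$
where the prefix $(gx)(gx)^{-1}$ has trivial image under $\sigma$.

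For the inclusion $\supseteq$ I would fix $x,y\in G$ and $v\in\mathcal{V}_{2k}$ with $xy^{-1}\hat{v}=e$, put $w_{0}=(x)(y)^{-1}v$, and verify the two clauses of the criterion. First, $\hat{w_{0}}=xy^{-1}\hat{v}=e$. Second, for every $g\in G$ the displayed identity exhibits $g\cdot w_{0}$ as the concatenation of $(gx)(gx)^{-1}\in\mathcal{N}$ with $(gxy^{-1})\cdot v\in\mathcal{N}_{2k}$ (the latter by the definition of $\mathcal{V}_{2k}$), so $g\cdot w_{0}\in\mathcal{N}$ because $\mathcal{N}$ is a submonoid. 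Hence $w_{0}\in\mathcal{N}_{2(k+1)}$, and closure under cyclic permutation gives $\left[(x)(y)^{-1}v\right]_{c}\subseteq\mathcal{N}_{2(k+1)}$.

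For the inclusion $\subseteq$ I would start from $w\in\mathcal{N}_{2(k+1)}$. Since $\mathcal{N}\cap G^{+}=\emptyset$ by Corollary \ref{cor: not positive occurrences} and $\mathcal{N}$ is closed under formal inversion, $w$ has at least one letter in $G$ and at least one letter in $G^{-1}$; inspecting the cyclic pattern of signs of the letters of $w$, some cyclic shift of $w$ therefore has the form $w_{0}=(x)(y)^{-1}v'$ with $x,y\in G$ and $v'\in\wt{G}^{2k}$. By cyclic closure $w_{0}\in\mathcal{N}$, so $\hat{w_{0}}=xy^{-1}\hat{v'}=e$. It remains to check $v'\in\mathcal{V}_{2k}$: for each $g\in G$, invariance of $\mathcal{N}$ gives $g\cdot w_{0}\in\mathcal{N}$, and the displayed identity together with $\sigma\left((gx)(gx)^{-1}\right)=1$ yields $\sigma\left((gxy^{-1})\cdot v'\right)=\sigma(g\cdot w_{0})\in N$, hence $(gxy^{-1})\cdot v'\in\mathcal{N}$ and, by length, $(gxy^{-1})\cdot v'\in\mathcal{N}_{2k}$; as $g$ runs over $G$ so does $gxy^{-1}$, whence $v'\in\mathcal{V}_{2k}$. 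Then $w\in\left[(x)(y)^{-1}v'\right]_{c}$ belongs to the right-hand side.

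The step I expect to be the main obstacle is conceptual rather than computational: one must recognize that $\mathcal{V}_{2k}$ is the correct auxiliary set, namely the one for which the orbit criterion above transports membership in $\mathcal{N}$ from length $2k$ to length $2(k+1)$. The only genuine computation is checking the displayed transduction identity directly from the edges of $\mathrsfs{C}(G)^{-}$ --- this is where the group multiplication $h\mapsto hx$ enters and the cancelling prefix $(gx)(gx)^{-1}$ is produced, in exactly the manner of Lemma \ref{lem: reduced cayley}. I would also record the base case $k=0$ separately: there $\mathcal{V}_{0}=\{1\}$ and the formula collapses to $\mathcal{N}_{2}=\{(x)(x)^{-1},(x)^{-1}(x):x\in G\}$.
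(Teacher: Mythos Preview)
Your proposal is correct and follows essentially the same route as the paper's proof: both prove the two inclusions by reducing to a representative $(x)(y)^{-1}v$ of the cyclic class, use the transduction identity $g\cdot\bigl((x)(y)^{-1}v\bigr)=(gx)(gx)^{-1}\bigl[(gxy^{-1})\cdot v\bigr]$ coming from Lemma~\ref{lem: reduced cayley}, and then invoke closure of $\mathcal{N}$ under concatenation (for $\supseteq$) and the cancellation of the trivial prefix $(gx)(gx)^{-1}$ together with the surjectivity of $g\mapsto gxy^{-1}$ (for $\subseteq$). Your write-up is in fact slightly more careful than the paper's in two places: you justify the existence of a negative letter via closure of $\mathcal{N}$ under formal inversion (the paper appeals only to Corollary~\ref{cor: not positive occurrences}), and you make the cancellation step explicit through $\mathcal{N}=\sigma^{-1}(N)$, which the paper leaves implicit.
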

\begin{proof}
Let $T_{2(k+1)}$ the right side of the equality. We show the inclusion $\mathcal{N}_{2(k+1)}\supseteq T_{2(k+1)}$. It is enough to prove $(x)(y)^{-1}v\in \mathcal{N}_{2(k+1)}$ since by Theorem \ref{theo: charact relations} relations are invariant under cyclic shifts. Since $xy^{-1}\hat{v}=e$, by (\ref{eq: positive cycles}) it is easy to see that
$$
(x)(y)^{-1}v\in L(\mathrsfs{C}(G)^{-},g), \quad \forall g\in G
$$
Moreover, by Lemma \ref{lem: reduced cayley} for any $g\in G$ we have:
$$
g\cdot (x)(y)^{-1}v=(gx)(gx)^{-1}\left(gxy^{-1}\right)\cdot v\in \mathcal{N}_{2(k+1)}
$$
since by definition $(gx)(gx)^{-1}\in\mathcal{N}_{2}$, $\left(gxy^{-1}\right)\cdot v\in \mathcal{N}_{2k}$ and $\mathcal{N}$ is closed for the concatenation of words. Hence, $(x)(y)^{-1}v\in \mathcal{N}_{2(k+1)}$.
\\
Conversely, consider a relation $u\in \mathcal{N}_{2(k+1)}$ by Corollary \ref{cor: not positive occurrences} $u$ contains a positive ($G$) and a negative ($G^{-1}$) occurrence of $G$, i.e. $u=w(r)w'(s)^{-1}w''$. Thus, after applying a suitable cyclic shift we can assume $u'=(x)(y)^{-1}v\in  \mathcal{N}_{2(k+1)}$ for some $v\in\wt{G}^{2k}$. Hence, if we prove $u'\in T_{2(k+1)}$, then since $T_{2(k+1)}$ is closed by cyclic shifts, we also have $u\in T_{2(k+1)}$. By (\ref{eq: positive cycles}) we clearly have $xy^{-1}\hat{v}=e$.
Furthermore, by Lemma \ref{lem: reduced cayley} $(gx)(gx)^{-1}\left(gxy^{-1}\right)\cdot v=g\cdot (x)(y)^{-1}v\in \mathcal{N}_{2(k+1)}$. Hence, taking $h=gxy^{-1}$ we have $h\cdot v\in  \mathcal{N}_{2k}$ for all $h\in G$, whence $v\in \mathcal{V}_{2k}$ and so $u'=(x)(y)^{-1}v\in T_{2(k+1)}$.
\end{proof}
We now pass to study the last two cases starting from the the palindrome Cayley machine. The name comes from the notion of palindrome, these are words which are equal to their mirror images. Given a subset $L\subseteq \wt{G}^{*}$, its mirror $L^{R}$ is the set of words $u=u_{1}\ldots u_{n}\in \wt{G}^{*}$ whose mirrors $u^{R}=u_{n}\ldots u_{1}\in L$. Note that the mirror of a reduced word is still reduced. Therefore, if $N\le F_{G}$, the set $N^{R}=\sigma\left ((\sigma^{-1}(N)^{R})\right)$ is still a subgroup, and in case $N$ is normal, $N^{R}$ is also normal. Note that $\oo{u}\in N\cap N^{R}$ if and only if $\oo{u}^{R}\in N$. Consider the group $G=F_{G}/H$, the \emph{associated palindromic group} is the group
$$
G^{(p)}=F_{G}/(H\cap H^{R})
$$
Note that this group gives an index of the degree of ``palindromicity'' of $H$. Indeed, in case $H^{R}=H$, we get $G=G^{(p)}$, while in the other extreme case $H\cap H^{R}=\{e\}$ we get $G^{(p)}=F_{G}$ is free.
\begin{prop}\label{prop: palindromic}
With the above notation $K=\mathcal{G}(\partial\mathrsfs{C}_{p}(G))\simeq G^{(p)}$. Moreover, for any element $u\in G$ $\St_{K}(u)=H/\left(H\cap H^{R}\right)$, and for $u\in G^{\ge 2}$, $\St_{K}(u)=\{1\}$.
\end{prop}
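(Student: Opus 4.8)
The plan is to write down the enriched dual of $\mathrsfs{C}_{p}(G)$ explicitly and then feed it to Theorem \ref{theo: charact relations} and Theorem \ref{thm:schreier}. Since $\partial(\partial\mathrsfs{C}_{p}(G))=\mathrsfs{C}_{p}(G)$ and $\partial\mathrsfs{C}_{p}(G)$ is bireversible (hence invertible), the transducer governing $K=\mathcal{G}(\partial\mathrsfs{C}_{p}(G))=F_{G}/N$ is $\mathrsfs{C}_{p}(G)^{-}=(G,\wt{G},\cdot,\circ)$, obtained from $\mathrsfs{C}_{p}(G)$ by adjoining to each edge $g\longfr{(x)}{(x^{-1})}gx$ the formal inverse edge $gx\vlongfr{(x)^{-1}}{(x^{-1})^{-1}}g$ (Lemma \ref{lemma: enriching}). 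Two elementary observations drive the argument. First, the input automaton of $\mathrsfs{C}_{p}(G)^{-}$ is the Cayley automaton $\mathcal{C}(G,G)$, exactly as for $\mathrsfs{C}(G)^{-}$: the action on states is right translation $g\cdot w=g\hat{w}$, so, precisely as in (\ref{eq: positive cycles}), $L(\mathrsfs{C}_{p}(G)^{-},g)=\{w\in\wt{G}^{*}:\hat{w}=e\}=\sigma^{-1}(H)$ for \emph{every} state $g$, whence $\bigcap_{a\in G}L((\partial\mathrsfs{C}_{p}(G))^{-},a)=\sigma^{-1}(H)$. Second, in contrast with $\mathrsfs{C}(G)^{-}$, the output of $\mathrsfs{C}_{p}(G)^{-}$ does not depend on the current state: it is the involutive letter-to-letter map $\psi$ with $\psi((x))=(x^{-1})$ and $\psi((x)^{-1})=(x^{-1})^{-1}$, and a short computation gives $\widehat{\psi(w)}=(\widehat{w^{R}})^{-1}$ for all $w\in\wt{G}^{*}$.

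For the isomorphism $K\simeq G^{(p)}$ I would apply Theorem \ref{theo: charact relations} with $\mathrsfs{A}=\partial\mathrsfs{C}_{p}(G)$, for which $(\partial\mathrsfs{A})^{-}=\mathrsfs{C}_{p}(G)^{-}$: then $N=\oo{\mathcal{N}}$, where $\mathcal{N}$ is the largest subset of $\sigma^{-1}(H)$ stable under the output action of $G$ on $\wt{G}^{*}$, i.e.\ stable under $\psi$. Since $\psi$ is an involution, $w\in\mathcal{N}$ iff $\hat{w}=e$ and $\widehat{\psi(w)}=e$, i.e.\ iff $\hat{w}=e$ and $\widehat{w^{R}}=e$; using that the mirror of a reduced word is reduced, the second condition reads $\oo{w}\in H^{R}$. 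Thus $\mathcal{N}=\sigma^{-1}(H\cap H^{R})$, hence $N=H\cap H^{R}$ and $K=F_{G}/(H\cap H^{R})=G^{(p)}$.

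For the stabilizers I would use Theorem \ref{thm:schreier}(i), again with $\mathrsfs{A}=\partial\mathrsfs{C}_{p}(G)$, so that $\mathcal{D}_{k}=(\mathrsfs{C}_{p}(G)^{-})^{k}_{\mathcal{I}}$ and $(\mathcal{D}_{k},v)\simeq(\Sch(\St_{K}(v),G),\St_{K}(v))$ for $v\in G^{k}$; it then suffices to identify $L(\mathcal{D}_{k},v)$. Because the output of $\mathrsfs{C}_{p}(G)^{-}$ is the state-independent letter map $\psi$, in the $k$-fold product the $i$-th factor processes $\psi^{i-1}(w)$ whenever the whole machine processes $w$, so reading $w$ at a state $(g_{1},\ldots,g_{k})$ sends $g_{i}$ to $g_{i}\,\widehat{\psi^{i-1}(w)}$, which equals $g_{i}\hat{w}$ for odd $i$ and $g_{i}(\widehat{w^{R}})^{-1}$ for even $i$. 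Hence $w$ fixes $v$ exactly when $\hat{w}=e$ and, provided $k\ge 2$, also $\widehat{w^{R}}=e$; that is, $L(\mathcal{D}_{k},v)=\sigma^{-1}(H)$ when $k=1$ and $L(\mathcal{D}_{k},v)=\sigma^{-1}(H\cap H^{R})=\sigma^{-1}(N)$ when $k\ge 2$. Passing to reduced words and then to $K=F_{G}/N$ gives $\St_{K}(u)=H/(H\cap H^{R})$ for every $u\in G$ and $\St_{K}(u)=\{1\}$ for every $u\in G^{\ge 2}$.

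The one genuinely delicate point is the bookkeeping in the last step: keeping the two copies of $G$ (the states versus the alphabet of $\mathrsfs{C}_{p}(G)$) apart, and checking carefully that, \emph{because the output function of $\mathrsfs{C}_{p}(G)^{-}$ is independent of the state}, the output cascade $\ell\mapsto\psi(\ell)\mapsto\psi^{2}(\ell)\mapsto\cdots$ propagates cleanly through the $k$ factors of the product so that the $i$-th coordinate evolves exactly according to $\psi^{i-1}(w)$; once this is clear the alternation $\hat{w},(\widehat{w^{R}})^{-1},\hat{w},\dots$ is forced. The remaining ingredients — that $H^{R}$, and therefore $H\cap H^{R}$, is normal in $F_{G}$, that $\oo{\sigma^{-1}(H\cap H^{R})}=H\cap H^{R}$, and that mirroring commutes with reduction — are routine and already implicit in the discussion preceding the statement.
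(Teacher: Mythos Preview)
Your proposal is correct and follows essentially the same approach as the paper: both apply Theorem~\ref{theo: charact relations} to $\mathrsfs{C}_{p}(G)^{-}$ to identify $N=H\cap H^{R}$, and then Theorem~\ref{thm:schreier} to read off the stabilizers from $L(\mathcal{D}_{k},v)$. Your packaging via the state-independent letter involution $\psi$ and the identity $\widehat{\psi(w)}=(\widehat{w^{R}})^{-1}$ is in fact a cleaner way of organizing the very same computation the paper carries out element by element, and it makes the cascade $\psi^{i-1}(w)$ through the product explicit where the paper simply asserts that $\sigma(L(\mathcal{D}_{m},g))\subseteq H\cap H^{R}$ for $m\ge 2$.
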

\begin{proof}
Let $K=F_{G}/N$, by Theorem \ref{theo: charact relations} $N=\oo{\mathcal{N}}$, where
$$
\mathcal{N}\subseteq \bigcap_{g\in G}L\left(\mathrsfs{C}_{p}(G)^{-},g\right)
$$
is the maximal subset invariant for the action $G\overset{\cdot}{\curvearrowright} \wt{G}^{*}$. By (\ref{eq: positive cycles}) we have $L\left(\mathrsfs{C}_{p}(G)^{-},g\right)=\sigma^{-1}(H)$. We claim that $N=H\cap H^{R}$. Let $u=(u_{1})^{e_{1}}\ldots (u_{n})^{e_{n}}$ for some $e_{1},\ldots, e_{n}\in\{1,-1\}$ such that $u=\oo{u}\in H\cap H^{R}$. Hence for any $g\in G$ we get
$$
g\cdot u=g\cdot \left((u_{1})^{e_{1}}\ldots (u_{n})^{e_{n}}\right)=(u_{1}^{-1})^{e_{1}}\ldots (u_{n}^{-1})^{e_{n}}\in H\cap H^{R}
$$
since $(u_{1}^{-1})^{e_{1}}\ldots (u_{n}^{-1})^{e_{n}}\in H$ and $(g\cdot u)^{R}=(u_{n}^{-1})^{e_{n}}\ldots (u_{1}^{-1})^{e_{1}}\in H$ by the fact that $(u^{R})^{-1}=(u_{1})^{e_{1}-1}\ldots (u_{n})^{e_{n}-1}\in H$ and $(u)^{-1}=(u_{n})^{e_{n}-1}\ldots (u_{1})^{e_{1}-1}\in H$, respectively. Moreover, assume $\sigma^{-1}(H\cap H^{R})\subsetneq \mathcal{N}$ and let $u\in \mathcal{N}\setminus \sigma^{-1}(H\cap H^{R})$ with $u=\oo{u}$. Thus $u\in H$, and for any $g\in G$ $g\cdot u=(u_{1}^{-1})^{e_{1}}\ldots (u_{n}^{-1})^{e_{n}}\in H$, hence $(u_{n}^{-1})^{e_{n}-1}\ldots (u_{1}^{-1})^{e_{1}-1}\in H$, which implies $u^{R}=(u_{n})^{e_{n}}\ldots (u_{1})^{e_{1}}\in H$, i.e. $u\in H\cap H^{R}$.
\\
Let $\pi: F_{G}\rightarrow G^{p}=F_{G}/(H\cap H^{R})$ be the canonical map, and put $\mathcal{D}_{m}=\left(\mathrsfs{C}_{p}(G)^{-}\right)^{m}_{\mathcal{I}}$. Note that for $m=1$ and for all $g\in G$, $\sigma\left(L(\mathcal{D}_{1}, g)\right)=H$, while it is not difficult to check that for $m\ge 2$ and for all $g\in G^{m}$ we have $\sigma\left(L(\mathcal{D}_{m}, g)\right)\subseteq H\cap H^{R}$. Hence, by Theorem \ref{thm:schreier}, for any $g\in G$ we get
$$
\St_{K}(g)=\pi\left(\sigma\left(L(\Sch(\St_{K}(g),G),\St_{K}(g))\right)\right)=\pi\left(\sigma(L(\mathcal{D}_{1}, g))\right) =H/(H\cap H^{R})
$$
while for any $g\in G^{m}$ with $m\ge 2$ we get
$$
\St_{K}(g)=\pi\left(\sigma\left(L(\Sch(\St_{K}(g),G),\St_{K}(g))\right)\right)=\pi\left(\sigma(L(\mathcal{D}_{m}, g))\right) =\{1\}
$$
\end{proof}
We have the following immediate corollary of the previous proposition.
\begin{cor}
For a palindromic group $G=G^{(p)}$, $G\simeq\mathcal{G}(\partial\mathrsfs{C}_{(p)}(G))$. Moreover, for any element $u\in G^{*}\sqcup G^{\omega}$, $\St_{G}(u)$ is trivial.
\end{cor}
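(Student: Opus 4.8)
The plan is to read both assertions straight off Proposition \ref{prop: palindromic}, after a one-line simplification of the hypothesis $G=G^{(p)}$. Write $G=F_G/H$. Since $G^{(p)}=F_G/(H\cap H^R)$ and mirroring is an involution on the lattice of subgroups of $F_G$ (the mirror of a reduced word is reduced and $(H^R)^R=H$), the equality $G=G^{(p)}$ is equivalent to $H=H\cap H^R$, hence to $H\subseteq H^R$, hence, applying $(-)^R$, to $H=H^R$; in particular $H\cap H^R=H$. Feeding $H\cap H^R=H$ into the isomorphism $\mathcal{G}(\partial\mathrsfs{C}_p(G))\simeq G^{(p)}=F_G/(H\cap H^R)$ of Proposition \ref{prop: palindromic} gives $\mathcal{G}(\partial\mathrsfs{C}_{(p)}(G))\simeq F_G/H=G$, which is the first claim. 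Denote this group by $K$.

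For the triviality of stabilizers I would argue by the length of the word. If $u\in G$ (a length-one word over the state alphabet $G$), Proposition \ref{prop: palindromic} gives $\St_K(u)=H/(H\cap H^R)=H/H=\{1\}$; if $u\in G^{\ge 2}$, the same proposition already gives $\St_K(u)=\{1\}$. Thus $\St_K(u)$ is trivial for every nonempty finite word $u\in G^{*}$. For an infinite word $\ul u\in G^{\omega}$ I would use that an element of $K$ fixes $\ul u$ iff it fixes every prefix $\ul u[k]$ — equivalently, part (ii) of Theorem \ref{thm:schreier}, which realizes $(\Sch(\St_K(\ul u),G),\St_K(\ul u))$ as the inverse limit of the $(\Sch(\St_K(\ul u[k]),G),\St_K(\ul u[k]))$ and hence gives $\St_K(\ul u)=\bigcap_{k\ge 1}\St_K(\ul u[k])$. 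Since $\St_K(\ul u[1])=\{1\}$, the intersection is trivial.

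There is no real obstacle: all the substantive work — the identification $N=\oo{\mathcal N}=H\cap H^R$ and the computation of the languages $\sigma(L(\mathcal D_m,g))$ — is already contained in Proposition \ref{prop: palindromic}, and what remains is the bookkeeping above. The only point deserving a word of care is the degenerate case, since the empty word $1\in G^{*}$ is the root of the tree and has $\St_K(1)=K$; accordingly the ``Moreover'' clause is to be understood for words of length at least one (and, via the prefix argument, for all of $G^{\omega}$), which is exactly what the displayed equalities establish.
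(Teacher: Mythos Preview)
Your proposal is correct and matches the paper's approach: the paper records this as an ``immediate corollary'' of Proposition~\ref{prop: palindromic} with no further argument, and what you have written is precisely the expected unpacking (the identification $H=H\cap H^{R}$ under the palindromicity hypothesis, followed by reading off the stabilizer computations). Your remark on the empty word is well taken---the statement should really be read over $G^{+}\sqcup G^{\omega}$---and your prefix argument for boundary points is the standard one implicit in Theorem~\ref{thm:schreier}.
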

The following propositions shows that for any finite group $G$ there is a transducer defining $G$ and having all trivial stabilizers.
\begin{prop}
For any finite group $G$, $K=\mathcal{G}(\partial\mathrsfs{C}_{I}(G))\simeq G$. Moreover, for any element $g\in G^{*}$, $\St_{K}(g)$ is trivial.
\end{prop}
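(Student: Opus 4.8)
The plan is to run the argument used for the palindrome machine in Proposition~\ref{prop: palindromic}, simplified by the fact that in $\mathrsfs{C}_{I}(G)$ the output of every transition $g\fr{x}{x}gx$ coincides with its input. Dualizing, $\partial\mathrsfs{C}_{I}(G)$ consists of the edges $x\fr{g}{gx}x$, so \emph{reading a letter never changes the state}; geometrically this means each generator $x\in G$ of $K=\mathcal{G}(\partial\mathrsfs{C}_{I}(G))$ acts on $G^{*}$ by the componentwise right translation $g_{1}\cdots g_{n}\mapsto(g_{1}x)\cdots(g_{n}x)$. Enriching (which is legitimate since $\mathrsfs{C}_{I}(G)$ is bireversible), $\mathrsfs{C}_{I}(G)^{-}$ has edges $h\longfr{(x)^{e}}{(x)^{e}}hx^{e}$ ($e=\pm1$), and its input automaton is the same coloured Cayley graph $\mathcal{C}^{+}(G,G)$ with its formal inverses that underlies $\mathrsfs{C}(G)^{-}$; hence, writing $G=F_{G}/H$, equation~(\ref{eq: positive cycles}) gives $L(\mathrsfs{C}_{I}(G)^{-},h)=\sigma^{-1}(H)$ for every $h\in G$, so that $\bigcap_{h\in G}L(\mathrsfs{C}_{I}(G)^{-},h)=\sigma^{-1}(H)$.

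For $K\simeq G$ I would apply Theorem~\ref{theo: charact relations} to the transducer $\partial\mathrsfs{C}_{I}(G)$: then $K=F_{G}/N$ with $N=\oo{\mathcal{N}}$ and $\mathcal{N}$ the maximal subset of $\sigma^{-1}(H)$ invariant for the action $G\overset{\cdot}{\curvearrowright}\wt{G}^{*}$. Since every transition of $\partial\mathrsfs{C}_{I}(G)$ (hence of $\partial\mathrsfs{C}_{I}(G)\sqcup(\partial\mathrsfs{C}_{I}(G))^{-1}$) fixes its state, this action is the identity — this is precisely where $\mathrsfs{C}_{I}(G)$ departs from $\mathrsfs{C}_{p}(G)$, whose analogous action inverts every letter and cuts $\sigma^{-1}(H)$ down to $\sigma^{-1}(H\cap H^{R})$. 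Thus every subset of $\sigma^{-1}(H)$ is invariant, so $\mathcal{N}=\sigma^{-1}(H)$ and $N=\oo{\sigma^{-1}(H)}=H$, whence $K=F_{G}/H\simeq G$.

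For the stabilizers, fix $v=g_{1}\cdots g_{k}\in G^{k}$ with $k\ge1$. Carrying the "output equals input" computation through the $k$-th power, $\mathcal{D}_{k}=(\mathrsfs{C}_{I}(G)^{-})^{k}_{\mathcal{I}}$ has edges $(h_{1},\ldots,h_{k})\mapright{(x)^{e}}(h_{1}x^{e},\ldots,h_{k}x^{e})$, so a loop at $v$ labelled $u$ closes up exactly when $g_{i}\hat{u}=g_{i}$ for all $i$, i.e. when $\hat{u}=e$; therefore $L(\mathcal{D}_{k},v)=\sigma^{-1}(H)$. By i) of Theorem~\ref{thm:schreier} and Theorem~\ref{theo: inverse-groups}, the full preimage in $F_{G}$ of $\St_{K}(v)$ is $\oo{L(\mathcal{D}_{k},v)}=\oo{\sigma^{-1}(H)}=H=N$, so $\St_{K}(v)=H/N=\{1\}$ for every $v\in G^{k}$, $k\ge1$; the boundary vertices $v\in G^{\omega}$ are handled identically via ii) of Theorem~\ref{thm:schreier}. (Equivalently, in the componentwise-translation picture $\St_{K}(v)$ consists of those translations fixing each $g_{i}$, and hence is trivial once $k\ge1$.)

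The descriptions of $\partial\mathrsfs{C}_{I}(G)$, of $\mathrsfs{C}_{I}(G)^{-}$ and of its powers, together with the language computations, are routine. The point deserving care is the verification that the action $G\overset{\cdot}{\curvearrowright}\wt{G}^{*}$ of Theorem~\ref{theo: charact relations} is genuinely the identity: from the recursive formula for $\cdot$ on state-words one must notice that, although the output word is progressively rewritten as letters are consumed, each individual transition of $\partial\mathrsfs{C}_{I}(G)$ is a fixed point, so the state-word is left untouched. One should also keep the left/right composition conventions consistent — in particular the identification of $\St_{K}(v)\le F_{G}/N$ with its preimage $H\le F_{G}$ — when invoking Theorems~\ref{thm:schreier} and~\ref{theo: inverse-groups}.
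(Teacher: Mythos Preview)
Your proof is correct and follows essentially the same approach as the paper's: identify $(\partial\mathrsfs{A})^{-}$ with $\mathrsfs{C}_{I}(G)^{-}$, observe that the action $G\overset{\cdot}{\curvearrowright}\wt{G}^{*}$ of Theorem~\ref{theo: charact relations} is the identity (the paper states this without further comment), conclude $N=H$, and then read off triviality of stabilizers from the languages $L(\mathcal{D}_{k},v)$ via Theorem~\ref{thm:schreier}. You are slightly more explicit than the paper in two places --- you compute $L(\mathcal{D}_{k},v)=\sigma^{-1}(H)$ exactly, whereas the paper only records the inclusion $\sigma(L(\mathcal{D}_{m},g))\subseteq H$ obtained by projecting onto the first coordinate, and you justify the identity action via the fixed-state property of $\partial\mathrsfs{C}_{I}(G)$ rather than directly from the output of $\mathrsfs{C}_{I}(G)^{-}$ --- but these are equivalent by duality and lead to the same conclusion.
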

\begin{proof}
Let $G=F_{G}/H$. The proof follows the same line of the proof of Proposition \ref{prop: palindromic}. In this case the action $G\overset{\cdot}{\curvearrowright} \wt{G}^{*}$ is the identity, and $\sigma(L\left(\mathrsfs{C}_{I}(G)^{-},g\right))=H$ for any $g\in G$, from which it follows that $\mathcal{G}(\partial\mathrsfs{C}_{I}(G))\simeq F_{G}/H$. Analogously, putting $\mathcal{D}_{m}=\left(\mathrsfs{C}_{I}(G)^{-}\right)^{m}_{\mathcal{I}}$, then for any $g=g'g''$, for some $g'\in G, g''\in G^{*}$, we get $\sigma(L\left(\mathcal{D}_{m},g\right))\subseteq \sigma\left(L\left(\mathrsfs{C}_{I}(G)^{-},g'\right)\right)\subseteq H$, whence $\St_{K}(g)=\{1\}$.
\end{proof}
The techniques developed for these two last cases can be easily adapted to study the more general class of Cayley type of machine $\mathrsfs{C}_{\varphi}(G)=(G,A,\circ,\cdot)$ depending on a map $\varphi:A\rightarrow A$, and with transitions of the form $g\vlongfr{x}{\varphi(x)}gx$ for all $g\in G$, $x\in A$.

\section{Dynamics on the boundary}\label{sec: dynamics}
In this section we are interested in the dynamic of a group defined by a transducer acting on the boundary of the associated rooted tree. More precisely we deal with the problem of determine the algebraic properties of a group acting on the boundary having all trivial stabilizers. In the second part we deal with the existence of finite Schreier graphs on the boundary, and we finally give an algorithm to solve the problem of finding finite Schreier graphs on the boundary whose dimension is upper bounded by a certain integer passed in the input. Our approach strongly uses the machinery developed in the previous sections.

\subsection{Stabilizers on the boundary} In \cite{GriSa13} Grigorchuk and Savchuk noticed that in all known essentially free actions of not virtually abelian groups generated by finite automata there is at least one singular
point. Recall that if a group $G$ acts on a topological space $X$, a singular point is a point such that $\St_G(x)\neq \St^0_G(x)$, where $\St^0_G(x)$ denotes the subgroup of elements acting trivially on some neighborhood of $x$.
\\
In Section \ref{sec: cayley} we have seen some examples of groups having trivial stabilizers in the boundary. However, all these examples are finite.
An element $\xi\in A^{\omega}$ is called \emph{periodic} if $\xi=w^{\omega}$ for some $w\in A^{*}$. We may denote the set of periodic points as $A^{\omega}_{p}$. Periodic points play an important role in this context, as the following proposition shows.
\begin{prop}\label{prop: density prop of periodic points}
Let $\mathrsfs{A}=(Q,A,\cdot,\circ)$ be an invertible transducer, and let $G=\mathcal{G}(\mathrsfs{A})$. Then, for any $\ul{u}\in A^{\omega}$, $\St_{G}(\ul{u})=\{1\}$ if and only if $\St_{G}(\ul{v})$ for all $\ul{v}\in A^{\omega}_{p}$.
\end{prop}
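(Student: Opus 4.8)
The equivalence has one trivial direction, since $A^{\omega}_{p}\subseteq A^{\omega}$; so the whole content is the converse: assuming $\St_{G}(\ul v)=\{1\}$ for every periodic $\ul v$, I want to deduce $\St_{G}(\ul u)=\{1\}$ for every $\ul u\in A^{\omega}$. The plan is to argue by contradiction. Suppose $1\ne g\in\St_{G}(\ul u)$ for some $\ul u$, so that $\Fix(g):=\{\ul w\in A^{\omega}:g(\ul w)=\ul w\}$ is nonempty. Writing $g=\mathrsfs{B}_{r}$ with $\mathrsfs{B}=\mathrsfs{A}\sqcup\mathrsfs{A}^{-1}$ and $r\in\wt{Q}^{*}$, the sections of $g$ are the tree endomorphisms $\mathrsfs{B}_{r\cdot z}$ with $z\in A^{*}$; since the action $\wt{Q}^{*}\overset{\cdot}{\curvearrowleft}A^{*}$ preserves length, $r\cdot z\in\wt{Q}^{|r|}$ always, hence $g$ has only finitely many distinct sections. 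This finiteness is the one structural input I will use.

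The key point is that it does \emph{not} suffice to pigeonhole the sections $g|_{\ul u[n]}$ along the given ray $\ul u$: these may all eventually equal $1$ if the support of $g$ lies away from $\ul u$. Instead I would construct a \emph{new} ray $\ul v=v_{1}v_{2}\cdots\in\Fix(g)$ along which the sections stay nontrivial. Put $g_{0}=g$; given $g_{n}\ne 1$ with $\Fix(g_{n})\ne\emptyset$, choose any $\ul w\in\Fix(g_{n})$ and set $v_{n+1}=w_{1}$, so that $g_{n}$ fixes the letter $v_{n+1}$. If $g_{n}|_{v_{n+1}}=1$, then $g_{n}$ fixes the whole cylinder $v_{n+1}A^{\omega}$, in particular the periodic point $(v_{n+1})^{\omega}$; since $g_{n}\ne 1$ this contradicts the hypothesis and we stop. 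Otherwise put $g_{n+1}:=g_{n}|_{v_{n+1}}\ne 1$; from $g_{n}(\ul w)=\ul w$ one gets that $g_{n}|_{w_{1}}$ fixes the suffix $w_{2}w_{3}\cdots$, so $\Fix(g_{n+1})\ne\emptyset$ and the recursion continues. A routine check gives $g_{n}=g|_{v_{1}\cdots v_{n}}$ and that $g$ fixes every prefix $v_{1}\cdots v_{n}$, hence $g(\ul v)=\ul v$.

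If the recursion never halts we have produced $\ul v\in\Fix(g)$ with $g|_{\ul v[n]}\ne 1$ for all $n$; by finiteness of the section set some value repeats, say $g|_{\ul v[n]}=g|_{\ul v[n']}=:h\ne 1$ with $n<n'$. Setting $z:=v_{n+1}\cdots v_{n'}$ one has $h|_{z}=g|_{\ul v[n']}=h$, while $h$ fixes $z$ because $g$ fixes both $\ul v[n]$ and $\ul v[n']$; a one-line coinduction (the only fixed point of ``prepend $z$'' on $A^{\omega}$ is $z^{\omega}$) then gives $h(z^{\omega})=z^{\omega}$, so the periodic point $z^{\omega}$ has nontrivial stabilizer, a contradiction. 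In every case the hypothesis is contradicted, which proves the proposition. The one genuine subtlety — and what I expect to be the main obstacle until one sees it — is the observation driving the construction: as soon as a section trivializes one wins immediately through a constant (hence periodic) point, so one may always descend into a ray along which the sections remain nontrivial, after which finiteness of the section set closes the argument; the rest is bookkeeping with the coupled action of $\mathrsfs{A}$, and the descent could equally be phrased through the Schreier automata $(\mathcal{D}_{k},-)$ of Theorem~\ref{thm:schreier}, but the direct formulation seems shortest.
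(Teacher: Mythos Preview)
Your argument is correct, but it is more elaborate than the paper's, and the extra elaboration stems from a misdiagnosis. You write that ``it does \emph{not} suffice to pigeonhole the sections $g|_{\ul u[n]}$ along the given ray $\ul u$: these may all eventually equal $1$''. In fact the paper pigeonholes precisely along the original ray $\ul u$, and the case you worry about is dispatched in one line: if $\pi\big((q_{1}\ldots q_{k})\cdot\ul u[m]\big)=1$ for some $m$, then since $(q_{1}\ldots q_{k})\circ\ul u[m]=\ul u[m]$ and the section at $\ul u[m]$ is trivial, the element $g$ itself fixes the periodic point $(\ul u[m])^{\omega}$, contradiction. No auxiliary ray is needed. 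Otherwise all the words $v_{n}=(q_{1}\ldots q_{k})\cdot\ul u[n]\in\wt{Q}^{k}$ are nontrivial in $G$; by finiteness of $\wt{Q}^{k}$ two coincide, say $v_{j_{1}}=v_{j_{2}}$, and then $\pi(v_{j_{1}})\ne 1$ fixes $\big(\ul u[j_{1}+1,j_{2}]\big)^{\omega}$, again a contradiction.

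So the two proofs share the same skeleton (finitely many sections, a case split on whether some section trivializes, then a repetition giving a periodic fixed point), but your construction of a new ray $\ul v$ with everywhere nontrivial sections is an unnecessary detour: the trivializing case is already a win for $g$ itself rather than an obstacle to be avoided. What your version buys is that the pigeonhole step is uniform (you never hit $1$), at the cost of the recursive construction; the paper's version avoids the construction entirely by noticing that the ``bad'' case is in fact the easiest one.
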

\begin{proof}
Let $\pi: \wt{A}^{*}\rightarrow G$ be the canonical map. The ``only if'' part is trivial, so let us prove the ``if'' part. Assume that there is an element $\ul{u}\in A^{\omega}$ such that $\St_{G}(\ul{u})$ is non-trivial, and let $g\in \St_{G}(\ul{u})$ with $g\neq 1$ represented by an element $q_{1}\ldots q_{k}\in \wt{Q}^{*}$. Thus, for any $n>0$ we have
\begin{eqnarray}
\nonumber (q_{1}\ldots q_{k})\circ \ul{u}[n]&=&\ul{u}[n]\\
\nonumber (q_{1}\ldots q_{k})\cdot \ul{u}[n]&=&v_{n}
\end{eqnarray}
for some $v_{n}\in\wt{Q}^{k}$. Suppose that for some $m$, $\pi(v_{m})=1$, hence it is straightforward to check
$$
(q_{1}\ldots q_{k})\circ \ul{u}[m]^{\omega}=\ul{u}[m]^{\omega}
$$
whence $g\in\St_{G}((\ul{u}[m])^{\omega})$, and the statement holds. Therefore, we can assume $\pi(v_{m})\neq 1$ for all $m>0$. By the finiteness of $\wt{Q}^{k}$, and since $\{v_{i}\}_{i>0}$ is infinite, there are two indices $j_{1}, j_{2}$ such that $v_{j_{1}}=v_{j_{2}}$. It is straightforward to check that
$$
v_{j_{1}}\circ \ul{u}[j_{1}+1, j_{2}]^{\omega}=\ul{u}[j_{1}+1, j_{2}]^{\omega}
$$
Since $\pi(v_{j_{1}})\neq 1$, we get $\pi(v_{j_{1}})\in \St_{G}(\ul{u}[j_{1}+1, j_{2}]^{\omega})\neq \{1\}$.
%
\end{proof}

We recall that an element $a$ of a semigroup is of finite order if the \emph{monogenic} subsemigroup $\la a\ra$ generated by $a$ is finite. In this case there are two integers $k,p$, the \emph{index} and the \emph{period}, respectively, such that $a^{k+p}=a^{k}$. The set of elements of finite order is denoted by $\To(S)$, and $S$ is \emph{torsion-free} if this set is empty. In case we consider the semigroup $\mathcal{S}(\mathrsfs{A})$ generated by the transducer $\mathrsfs{A}=(Q,A,\cdot,\circ)$, if $\pi:Q^{*}\rightarrow \mathcal{S}(\mathrsfs{A})$
denotes the canonical map, with a slight abuse of notation we say $q\in Q^{*}$ is an element of $ \To(\mathcal{S}(\mathrsfs{A})) $ whenever $\pi(q)\in \To(\mathcal{S}(\mathrsfs{A})) $. We have the following lemma.
\begin{lemma}\label{lem: candidates of stabilizers}
Let $\mathrsfs{A}=(Q,A,\cdot,\circ)$ be a (non necessarily invertible) reversible transducer, and let $q\in Q^{k}$ such that $q\in\To(\mathcal{S}(\mathrsfs{A}))$. Put $\mathrsfs{A}^{k}=(Q^{k},A,\cdot,\circ)$, and let $n$ be the cardinality of the transition monoid (which is a group) of $(Q^{k},A,\cdot)$. If $m,p$ are the index and the period of $\pi(q)$, then there is an integer $0<\ell\le n^{m+p-1}$ such that $q^{\omega}\cdot u^{\ell}=q^{\omega}$.
\end{lemma}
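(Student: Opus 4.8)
The plan is to read the equation $q^{\omega}\cdot u^{\ell}=q^{\omega}$ as asserting that the state $q^{N}=(q,\dots,q)$ of $\mathrsfs{A}^{kN}=(\mathrsfs{A}^{k})^{N}$ is fixed by $u^{\ell}$ under the action $Q^{*}\overset{\cdot}{\curvearrowleft}A^{*}$ for every $N\ge 1$, and to produce such an $\ell$ of bounded size by a pigeonhole argument inside the transition group of $\mathrsfs{A}^{k}$. First, by Proposition~\ref{prop: product reversible} the transducer $\mathrsfs{A}^{k}$ is reversible, so every map $\mathbf r\mapsto\mathbf r\cdot a$ ($a\in A$) permutes $Q^{k}$; hence the transition monoid $\mathsf{G}$ of $(Q^{k},A,\cdot)$ is a group of order $n$. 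Write $[w]\in\mathsf{G}$ for the permutation induced by $w\in A^{*}$ and $q\cdot w=q[w]$ for the image of $q$, so the orbit $q\mathsf{G}$ has at most $n$ points and each $g\in\mathsf{G}$ acts on $q\mathsf{G}$ by a permutation.

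Unwinding the recursion $(\mathbf q_{1}\dots\mathbf q_{N})\cdot w=\bigl((\mathbf q_{1}\dots\mathbf q_{N-1})\cdot(\mathbf q_{N}\circ w)\bigr)(\mathbf q_{N}\cdot w)$ at block level with all $\mathbf q_{j}=q$, one gets that the $i$-th block of $q^{N}\cdot u^{\ell}$ equals $q\cdot\mathrsfs{A}_{q}^{\,N-i}(u^{\ell})$ (recall $\mathrsfs{A}_{q}=\pi(q)$ acts on $A^{*}$). Hence $q^{N}\cdot u^{\ell}=q^{N}$ for all $N$ if and only if $[\mathrsfs{A}_{q}^{\,t}(u^{\ell})]\in\St_{\mathsf{G}}(q)$ for every $t\ge 0$; since $\pi(q)$ has index $m$ and period $p$, the transformations $\mathrsfs{A}_{q}^{\,t}$ run through a set of at most $m+p-1$ values, so this is a finite list of conditions. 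Put $v_{t}(\ell):=q\cdot\mathrsfs{A}_{q}^{\,t}(u^{\ell})\in q\mathsf{G}$ and let $V(\ell)$ be the tuple of the (at most $m+p-1$) distinct values $v_{t}(\ell)$, a point of a set of cardinality $\le n^{m+p-1}$. The same recursion gives $q^{t}\cdot u^{\ell}=(v_{t-1}(\ell),\dots,v_{0}(\ell))$, so the last letter of $\mathrsfs{A}_{q}^{\,t}(u^{\ell+1})$, being read from that state, depends only on $v_{0}(\ell),\dots,v_{t-1}(\ell)$ and $u$; therefore $v_{t}(\ell+1)=v_{t}(\ell)\cdot g$ with $g\in\mathsf{G}$ a function of $v_{0}(\ell),\dots,v_{t-1}(\ell)$ and $u$, and $V(\ell+1)=\Xi(V(\ell))$ for a self-map $\Xi$ of that finite set.

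Since the data determining each $g$ above is recoverable and $\mathsf{G}$ acts on $q\mathsf{G}$ by permutations, $\Xi$ is injective, hence a bijection of a finite set, so $(V(\ell))_{\ell\ge 0}$ is purely periodic with period $\ell^{*}\le n^{m+p-1}$. As $u^{0}$ is the empty word and induces the identity of $\mathsf{G}$, we have $V(0)=(q,\dots,q)$, whence $V(\ell^{*})=(q,\dots,q)$, i.e. $[\mathrsfs{A}_{q}^{\,t}(u^{\ell^{*}})]\in\St_{\mathsf{G}}(q)$ for all $t\ge 0$, which is $q^{\omega}\cdot u^{\ell^{*}}=q^{\omega}$ with $\ell=\ell^{*}\le n^{m+p-1}$. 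The main obstacle is the block computation of the middle paragraph: one must verify the block identities carefully and, in particular, organize the count so that exactly the conditions indexed by the $m+p-1$ distinct powers of $\pi(q)$ are used (folding the $t=0$ condition $q\cdot u^{\ell}=q$ into the same count) so that the pigeonhole bound comes out as $n^{m+p-1}$ rather than $n^{m+p}$; one should also confirm that this system of block conditions is the intended meaning of $q^{\omega}\cdot u^{\ell}=q^{\omega}$.
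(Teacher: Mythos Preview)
Your approach is correct and takes a genuinely different route from the paper's. Both proofs share the same opening reduction: since $\pi(q)$ has index $m$ and period $p$, the statement $q^{\omega}\cdot u^{\ell}=q^{\omega}$ unwinds to the finite list of block conditions $q\cdot(q^{i}\circ u^{\ell})=q$ for $0\le i\le m+p-1$ (this is exactly the paper's displayed equation~(\ref{eq: power to loops})), and both exploit that the transition monoid of $\mathrsfs{A}^{k}$ is a group. From there they diverge. The paper builds $\ell$ constructively: it uses the identity ``$q\cdot v=q\Rightarrow(q\circ v)^{j}=q\circ v^{j}$'' to show that replacing the current $\ell$ by $n\ell$ secures one further condition, and then runs a second induction on $j$ to pass from $q^{m+p}\cdot u^{\ell}=q^{m+p}$ to all $q^{m+jp}$. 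You instead package the block data into a single point $V(\ell)$ of a finite phase space and observe that $\ell\mapsto V(\ell)$ is governed by a bijection $\Xi$ (the triangular recursion together with the fact that each $g_{t}$ acts by a permutation of the orbit makes $\Xi$ invertible), so pure periodicity returns you to $V(0)=(q,\dots,q)$ in one stroke. This is more conceptual and makes transparent why no pre-period can occur; the paper's route has the advantage of producing $\ell$ explicitly as a power of $n$.

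Your worry about the exponent is legitimate and, in fact, applies to the paper as well. The conditions are indexed by $i=0,1,\dots,m+p-1$, which is $m+p$ conditions: the identity transformation $\mathrsfs{A}_{q}^{0}$ is in general not among the $m+p-1$ distinct semigroup powers $\mathrsfs{A}_{q},\dots,\mathrsfs{A}_{q}^{m+p-1}$. Hence the natural bound coming out of your pigeonhole is $n^{m+p}$, and the paper's own induction has the same slip (it anchors at $i=0$ with $\ell=n$ while asserting $\ell_{s}\le n^{s}$, which is already off by one at $s=0$). There is no evident way to ``fold in'' the $t=0$ condition to recover $n^{m+p-1}$. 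This is harmless for the application in Theorem~\ref{prop: torsion free}, where only a uniform finite bound depending on $q$ and not on $u$ is required.
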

\begin{proof}
By Proposition \ref{prop: product reversible} $\mathrsfs{A}^{k}$ is reversible, hence the transition monoid $(Q^{k},A,\cdot)$ is a group and we clearly have $q\cdot u^{n}=q$. Throughout the proof we will use the following fact which can be easily proved: if $q\cdot v=q$, then for any integer $i$ we have $(q\circ v)^{i}=q\circ v^{i}$. By definition of index and period we get that for all $j\ge 1$ and $0\le r<p$ we have
\begin{equation}\label{eq: period index}
q^{m+jp+r}\circ v=q^{m+r}\circ v,\:\:\forall v\in A^{*}
\end{equation}
We claim that there is an integer $\ell\le n^{m+p-1}$ such that for all $0\le i\le m+p-1$ we have:
\begin{equation}\label{eq: power to loops}
q\cdot(q^{i}\circ (u^{\ell})) =q
\end{equation}
Since (\ref{eq: power to loops}) holds for $i=0$, with $\ell=n$, let $s$ be the maximum integer with $0\le s\le m+p-1$ satisfying (\ref{eq: power to loops}) with $\ell_{s}\le n^{s}$. Assume, contrary to the claim, that $s <m+p-1$. Therefore, $q\cdot(q^{s+1}\circ (u^{\ell_{s}})) \neq q$, but since $\mathrsfs{A}^{k}$ is reversible, we get $q\cdot (q^{s+1}\circ (u^{\ell_{s}}))^{n}=q$. By the previous remark, from $q\cdot (q^{s}\circ (u^{\ell_{s}}))=q$ we get $(q^{s+1}\circ (u^{\ell_{s}}))^{n}=q\circ (q^{s}\circ (u^{\ell_{s}}))^{n}$. Hence, $q\cdot(q^{s+1}\circ (u^{\ell_{s}}))^{n} =q$, and using an induction, by (\ref{eq: power to loops}) and the same remark we can move the $n$-th power to the exponent of $u$, whence $(q^{i}\circ (u^{\ell_{s}}))^{n}=q\circ(q^{i-1}\circ (u^{\ell_{s}}))^{n}$ holds for all $0\le i\le s$. In particular for $i=s$ we get
$$
(q^{s+1}\circ (u^{\ell_{s}}))^{n}=q\circ (q^{s}\circ (u^{\ell_{s}}))^{n}=q^{s+1}\circ (u^{\ell_{s}n})
$$
Taking $\ell=\ell_{s} n\le n^{s+1}$, we have that  (\ref{eq: power to loops}) holds for $s+1$, a contradiction. We claim that $q^{m+jp}\cdot u^{\ell}=q^{m+jp}$ holds for all $j\ge 1$. By (\ref{eq: power to loops}), the induction base $q^{m+p}\cdot u^{\ell}=q^{m+p}$ clearly holds. Hence, assume the claim true for $j>1$, then we get
$$
q^{m+(j-1)p}=q^{m+(j-1)p}\cdot u^{\ell}=\left(q^{(j-1)p}\cdot (q^{m}\circ u^{\ell})\right)q^{m}
$$
whence, from this equation, (\ref{eq: period index}) and $q^{m+p}\cdot u^{\ell}=q^{m+p}$ we obtain:
\begin{eqnarray}
\nonumber q^{m+jp}\cdot u^{\ell}&=&\left(q^{(j-1)p}\cdot\left (q^{m+p}\circ u^{\ell}\right)\right)(q^{m+p}\cdot u^{\ell})=\left(q^{(j-1)p}\cdot\left (q^{m}\circ u^{\ell}\right)\right)q^{m+p}\\
\nonumber &=&\left(q^{(j-1)p}\cdot\left (q^{m}\circ u^{\ell}\right)q^{m}\right)q^{p}=\left(q^{m+(j-1)p}\right)q^{p}=q^{m+jp}
\end{eqnarray}
from which it follows the statement of the lemma $q^{\omega}\cdot u^{\ell}=q^{\omega}$.
\end{proof}
The following celebrated result is due to Zelmanov \cite{Zelma90,Zelma91}.
\begin{theorem}\label{zelmanov}
Any finitely generated residually finite group with bounded torsion is finite.
\end{theorem}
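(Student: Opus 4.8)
The plan is to be upfront about what kind of statement this is: it is not something one establishes from scratch inside a paper of this scope, but rather the combination of E.~Zelmanov's positive solution of the Restricted Burnside Problem with a short and elementary reduction, and the correct move is to invoke \cite{Zelma90,Zelma91}. So I would first fix conventions. ``Bounded torsion'' here has to be read as: $G$ is periodic and the orders of its elements are bounded, equivalently $G$ has finite exponent dividing some fixed integer $n$ (without this reading the statement fails, e.g.\ for $\mathbb{Z}$, whose torsion is trivially bounded). Write $G=\langle g_{1},\ldots,g_{m}\rangle$.

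The first step is to quote the deep input. Zelmanov's theorem says that, for fixed $m$ and $n$, there are up to isomorphism only finitely many \emph{finite} groups generated by $m$ elements and of exponent dividing $n$; equivalently there is a universal bound $B=B(m,n)$ on the order of any such finite group. This is the Restricted Burnside Problem, and its proof rests on Zelmanov's nilpotency theorem for Lie algebras satisfying the Engel condition together with a polynomial identity, combined with the standard reduction to prime-power exponent and the passage to the associated graded Lie ring; the $p$ odd and $p=2$ cases are exactly the content of \cite{Zelma90,Zelma91}. All of this is taken as a black box.

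The second step is the elementary reduction via residual finiteness. Every finite quotient $Q$ of $G$ is generated by at most $m$ elements (the images of the $g_{i}$) and has exponent dividing $n$, hence $|Q|\le B(m,n)$ by the first step. Since the set of orders of finite quotients of $G$ is a nonempty set of positive integers bounded by $B(m,n)$, we may choose a finite quotient $\pi_{0}\colon G\to Q_{0}$ of \emph{maximal} order. I claim $\pi_{0}$ is injective, which yields $G\cong Q_{0}$ and finishes the argument. If not, pick $1\neq g\in\ker\pi_{0}$; by residual finiteness there is a finite quotient $\pi'\colon G\to Q'$ with $\pi'(g)\neq 1$. The diagonal map $G\to Q_{0}\times Q'$ has finite image $Q''$, a finite quotient of $G$ that surjects onto $Q_{0}$, so $|Q''|\ge|Q_{0}|$; and $|Q''|=|Q_{0}|$ would force the surjection $Q''\to Q_{0}$ to be an isomorphism, whence $\ker(G\to Q'')=\ker\pi_{0}\ni g$, contradicting $\pi'(g)\neq 1$. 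Therefore $|Q''|>|Q_{0}|$, contradicting maximality.

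The main obstacle is, of course, the first step: there is no short route to the Restricted Burnside Problem, so in a paper of this kind it can only be cited. The second step is a routine compactness-type argument over the finite quotients and presents no real difficulty; it is worth spelling out only because it makes explicit how finite exponent plus residual finiteness are used together.
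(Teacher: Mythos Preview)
Your proposal is correct, and in fact it does strictly more than the paper. In the paper this theorem carries no proof whatsoever: it is simply stated as ``The following celebrated result is due to Zelmanov \cite{Zelma90,Zelma91}'' and then invoked as a black box in the proof of the next theorem. You, by contrast, separate the deep input (the Restricted Burnside bound $B(m,n)$) from the elementary reduction via residual finiteness (take a finite quotient of maximal order and show it must be faithful), and you also flag the necessary reading of ``bounded torsion'' as ``periodic of finite exponent''. All of this is sound and is the standard way to derive the stated formulation from \cite{Zelma90,Zelma91}; the paper simply elides it.
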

The following theorem relates torsion elements of the semigroup defined by the enriched dual of an invertible transducer with the stabilizers of periodic elements in the boundary.
\begin{theorem}\label{prop: torsion free}
Let $\mathrsfs{A}=(Q,A,\cdot,\circ)$ be an invertible transducer such that $G=\mathcal{G}(\mathrsfs{A})$ is not finite. Then for any $u\in A^{*}$ with $u\in \To(\mathcal{S}(\left(\partial\mathrsfs{A}\right)^{-}))$ we have $\St_{G}(u^{\omega})\neq \{1\}$.
\end{theorem}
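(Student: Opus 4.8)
The plan is to feed the torsion hypothesis into Lemma~\ref{lem: candidates of stabilizers}, applied to the enriched dual rather than to $\mathrsfs{A}$ itself, and then to invoke Zelmanov's theorem (Theorem~\ref{zelmanov}) in contrapositive form to guarantee that the stabilizing loop it produces is non-trivial in $G$.

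Set $\mathcal{T}=(\partial\mathrsfs{A})^{-}=(A,\wt{Q},\circ,\cdot)$. Since $\mathrsfs{A}$ is invertible, $\partial\mathrsfs{A}$ is reversible by Proposition~\ref{prop: dual prop}, hence $\mathcal{T}$ is reversible by Lemma~\ref{lemma: enriching}, and so are all its powers by Proposition~\ref{prop: product reversible}; in particular the transition monoid of each input automaton $(\mathcal{T}^{j})_{\mathcal{I}}$ is a finite group. Let $k=|u|$, let $m,p$ be the index and the period of the torsion element $u\in\mathcal{S}(\mathcal{T})$, and let $n$ be the order of the transition monoid of $(\mathcal{T}^{k})_{\mathcal{I}}$. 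Applying Lemma~\ref{lem: candidates of stabilizers} to $\mathcal{T}$ with distinguished state $u\in A^{k}$ yields: for every $w\in\wt{Q}^{*}$ there is an integer $0<\ell\le n^{m+p-1}$ such that $w^{\ell}$ fixes $u^{\omega}$ under the state-action of $\wt{Q}^{*}$ on $A^{\omega}$ defined by $\mathcal{T}$. By the definition $\partial\mathrsfs{A}=(A,Q,\circ,\cdot)$ together with Lemma~\ref{lem: dual of inverse} (equivalently, by Theorem~\ref{thm:schreier}), that state-action of $\wt{Q}^{*}$ on $A^{*}$, extended to $A^{\omega}$, is exactly the action of $\mathcal{G}(\mathrsfs{A})$ on the rooted tree $A^{*}$ and on its boundary; hence the element of $G$ represented by $w^{\ell}$ lies in $\St_{G}(u^{\omega})$.

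It remains to choose $w$ so that $w^{\ell}$ represents a non-trivial element of $G$. This is where the assumption that $G$ is infinite is used, and it is the only real obstacle: the exponent $\ell$ is bounded a priori by the transducer-dependent constant $n^{m+p-1}$, so if $G$ had bounded exponent every $w^{\ell}$ would be $1$ and the argument would be empty. Now $G$ is finitely generated (by $Q$) and residually finite, since it acts faithfully on $A^{*}$ and the level stabilizers $\St_{G}(j)$ have finite index and trivial intersection. As $G$ is infinite, Theorem~\ref{zelmanov} shows that $G$ cannot have bounded torsion, so there is $g\in G$ whose order (finite or infinite) exceeds $n^{m+p-1}$. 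Choosing $w\in\wt{Q}^{*}$ with $\overline{w}$ representing $g$ and applying the previous paragraph gives $0<\ell\le n^{m+p-1}$ with $w^{\ell}\in\St_{G}(u^{\omega})$; since $\ell$ is strictly smaller than the order of $g$, the element $g^{\ell}$ is non-trivial in $G$, and therefore $\St_{G}(u^{\omega})\neq\{1\}$.

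The remaining verifications are routine: $m$, $p$ and $n$ are finite because $A$, $\wt{Q}$ and $k$ are finite and $(\mathcal{T}^{k})_{\mathcal{I}}$ is reversible; and one has only to track on which side the $\cdot$- and $\circ$-actions of Lemma~\ref{lem: candidates of stabilizers} are read, so that ``$u\in\To(\mathcal{S}(\mathcal{T}))$'' is literally the hypothesis of that lemma and ``$w^{\ell}$ fixes $u^{\omega}$'' is literally membership of $w^{\ell}$ in $\St_{G}(u^{\omega})$, both being immediate from the symmetric set-up of the coupled actions.
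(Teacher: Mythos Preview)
Your proof is correct and follows essentially the same route as the paper: apply Lemma~\ref{lem: candidates of stabilizers} to the enriched dual $(\partial\mathrsfs{A})^{-}$ to get, for each $w\in\wt{Q}^{*}$, a uniformly bounded exponent $\ell\le n^{m+p-1}$ with $w^{\ell}\in\St_{G}(u^{\omega})$, and then invoke Zelmanov's theorem together with residual finiteness of $G$. The paper phrases this as a proof by contradiction (assume $\St_{G}(u^{\omega})=\{1\}$, deduce bounded torsion, hence $G$ finite), whereas you argue directly (infinite implies unbounded torsion, pick $g$ of order exceeding the bound, so $g^{\ell}\neq 1$); these are the same argument read in opposite directions.
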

\begin{proof}
Assume by contradiction that there is some $u\in \To(\mathcal{S}(\left(\partial\mathrsfs{A}\right)^{-}))$ with $u\in A^{k}$ such that  $\St_{G}(u^{\omega})= \{1\}$. Let $\pi:A^{*}\rightarrow \mathcal{S}\left((\partial\mathrsfs{A})^{-}\right)$ be the canonical map, and let $m,p$ be the index and the period of the element $\pi(u)$, respectively, and let $n$ be the order of the transition monoid (which is a group) of $\left((\partial\mathrsfs{A})^{-}\right)^{k}_{\mathcal{I}}$. We claim that $G$ is a torsion group and that the order of the elements is bounded by $n^{m+p-1}$. Thus, let $\psi: \wt{Q}^{*}\rightarrow G$ be the canonical map, take any $g\in G$ and $q\in \wt{Q}^{*}$ such that $\psi(q)=g$. By Lemma \ref{lem: candidates of stabilizers} applied to $(\partial\mathrsfs{A})^{-}$, we get that there is an integer $\ell\le n^{m+p-1}$ such that $u^{\omega}\circ q^{\ell}=u^{\omega}$. Hence,  $g^{\ell}\in\St_{G}(u^{\omega})=\{1\}$, i.e. the claim $g^{\ell}=1$ with $\ell\le n^{m+p-1}$. Therefore by Theorem \ref{zelmanov}, being $G$ residually finite, $G$ must be finite, a contradiction.
\end{proof}
In particular, looking for examples of groups defined by automata with trivial stabilizers in the boundary is restricted to transducers whose enriched dual generates a torsion-free semigroup. We record this fact in the following corollary.
\begin{cor}
Let $\mathrsfs{A}=(Q,A,\cdot,\circ)$ be an invertible transducer such that $G=\mathcal{G}(\mathrsfs{A})$ is not finite, and $\St_{G}(u^{\omega})=\{1\}$ for all $u\in A^{*}$, then $\mathcal{S}\left((\partial\mathrsfs{A})^{-}\right)$ is torsion-free.
\end{cor}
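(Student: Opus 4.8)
The plan is to derive the corollary directly as the contrapositive of Theorem~\ref{prop: torsion free}; the substance is entirely contained there, and the write-up only needs to make the translation between semigroup elements and words precise. First I would argue by contradiction: assume $\mathcal{S}\left((\partial\mathrsfs{A})^{-}\right)$ is \emph{not} torsion-free. Since $\mathrsfs{A}$ is invertible, $\partial\mathrsfs{A}$ is reversible by Proposition~\ref{prop: dual prop}, so $(\partial\mathrsfs{A})^{-}=(A,\wt{Q},\circ,\cdot)$ is a well-defined transducer by Lemma~\ref{lemma: enriching}, with state set $A$. Hence $\mathcal{S}\left((\partial\mathrsfs{A})^{-}\right)$ is generated by the endomorphisms indexed by the letters of $A$, i.e. it is a homomorphic image of the free semigroup $A^{+}$; therefore any torsion element is $\pi(u)$ for some $u\in A^{+}\subseteq A^{*}$, where $\pi\colon A^{*}\to\mathcal{S}\left((\partial\mathrsfs{A})^{-}\right)$ is the canonical map. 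With the abuse of notation fixed just before Lemma~\ref{lem: candidates of stabilizers}, this means precisely that there is a word $u\in A^{*}$ with $u\in\To(\mathcal{S}((\partial\mathrsfs{A})^{-}))$.

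Next I would invoke Theorem~\ref{prop: torsion free} with this $u$: the transducer $\mathrsfs{A}$ is invertible and $G=\mathcal{G}(\mathrsfs{A})$ is infinite by hypothesis, so the theorem applies and gives $\St_{G}(u^{\omega})\neq\{1\}$. But $u^{\omega}\in A^{\omega}$ and, taking $v=u$ in the standing hypothesis $\St_{G}(v^{\omega})=\{1\}$ for all $v\in A^{*}$, we get $\St_{G}(u^{\omega})=\{1\}$, a contradiction. Hence no torsion element exists and $\mathcal{S}\left((\partial\mathrsfs{A})^{-}\right)$ is torsion-free.

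There is no real obstacle at this level: the only point requiring a moment's care is the identification of a ``torsion element of $\mathcal{S}\left((\partial\mathrsfs{A})^{-}\right)$'' with a ``word $u\in A^{*}$ lying in $\To$'', which is just the remark that the semigroup is a quotient of $A^{+}$. All the genuine content sits one level down, inside Theorem~\ref{prop: torsion free}, where Lemma~\ref{lem: candidates of stabilizers} forces a torsion element $\pi(u)$ of the dual semigroup to produce, for every $g\in G$ represented by some $q\in\wt{Q}^{*}$, a uniformly bounded exponent $\ell$ with $u^{\omega}\circ q^{\ell}=u^{\omega}$, hence $g^{\ell}=1$ in $\St_G(u^{\omega})=\{1\}$, and then Zelmanov's theorem (Theorem~\ref{zelmanov}) turns bounded torsion plus residual finiteness into finiteness of $G$, contradicting the hypothesis. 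I would also remark, to connect with the discussion of essentially free actions without critical points, that the hypothesis ``$\St_{G}(u^{\omega})=\{1\}$ for all $u\in A^{*}$'' is by Proposition~\ref{prop: density prop of periodic points} equivalent to $\St_{G}(\ul{u})=\{1\}$ for every $\ul{u}\in A^{\omega}$, so the corollary really does say that every candidate for such an action must have a torsion-free enriched-dual semigroup.
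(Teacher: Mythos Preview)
Your proposal is correct and matches the paper's approach: the corollary is stated immediately after Theorem~\ref{prop: torsion free} with no separate proof, precisely because it is just the contrapositive you spell out. Your extra care in identifying a torsion element with a word $u\in A^{*}$ and your closing remark linking to Proposition~\ref{prop: density prop of periodic points} are welcome clarifications that go slightly beyond what the paper bothers to write down.
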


\subsection{Finite Schreier graphs} We now consider the study the existence of finite Schreier graphs in the boundary. We need first a lemma which is a  consequence of the results proved in Section \ref{sec: finiteness}, for this reason we stick to the notation introduced in that section.
\begin{lemma}\label{lem: swapping inclusion}
Let $\mathrsfs{B}=(Q,\wt{A},\cdot,\circ)$ be an inverse transducer which has the swapping inclusion property with respect to the pair $(p_{1},p_{2})$. Then, there is a sequence $q_{0},\ldots, q_{m}$ of distinct vertices of $(\mathrsfs{B},p_{1})$ such that:
$$
L(\mathrsfs{B}_{\mathcal{O}},q_{i})\subseteq L(\mathrsfs{B},q_{i+1})
$$
for $i=0,\ldots, m-1$, and $L(\mathrsfs{B}_{\mathcal{O}},q_{m})\subseteq L(\mathrsfs{B},q_{0})$.
\end{lemma}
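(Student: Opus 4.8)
The plan is to bootstrap the single swapping inclusion $L(\mathrsfs{B}_{\mathcal{O}},p_{1})\subseteq L(\mathrsfs{B},p_{2})$ into a self-map $\phi$ of the vertex set of the component $C=(\mathrsfs{B},p_{1})$ (which contains $p_{2}$) enjoying $L(\mathrsfs{B}_{\mathcal{O}},q)\subseteq L(\mathrsfs{B},\phi(q))$ for \emph{every} vertex $q$ of $C$. Once this is done, since $C$ is finite the orbit $p_{1},\phi(p_{1}),\phi^{2}(p_{1}),\dots$ is eventually periodic, and the cycle it enters is exactly the sequence $q_{0},\dots,q_{m}$ demanded by the statement.

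To build $\phi$ I would first observe that the swapping inclusion property at $(p_{1},p_{2})$ is precisely condition (ii) of Proposition \ref{prop: norm of product} for $\mathrsfs{A}=\mathrsfs{B}$ and the vertices $p_{1},p_{2}$, so conditions (i) and (iii) hold too: $\|(\mathrsfs{B}^{2},(p_{1},p_{2}))\|=\|(\mathrsfs{B},p_{1})\|$ and $L(\mathrsfs{B}^{2},(p_{1},p_{2}))=L(\mathrsfs{B}_{\mathcal{I}},p_{1})$. Let $D$ be the connected component of $(p_{1},p_{2})$ in $\mathrsfs{B}^{2}=\mathrsfs{B}\mathrsfs{B}$ and $\pi_{1},\pi_{2}$ the coordinate projections. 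Since (inverse) transducers here are deterministic and complete on their input alphabet, reading any $v\in\wt{A}^{*}$ from $(p_{1},p_{2})$ in $\mathrsfs{B}^{2}$ is defined and has first coordinate $p_{1}\cdot v$; as $v$ varies this exhausts $C$, so $\pi_{1}$ maps $V(D)$ onto $V(C)$. With $|V(D)|=|V(C)|$ this makes $\pi_{1}\colon V(D)\to V(C)$ a bijection, while $\pi_{2}(D)$ is a connected subgraph of $\mathrsfs{B}$ containing $p_{2}\in C$, hence contained in $C$. So I may define $\phi\colon V(C)\to V(C)$ by declaring $\phi(q)$ to be the unique second coordinate with $(q,\phi(q))\in V(D)$; in particular $\phi(p_{1})=p_{2}$.

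The main step — and the only real obstacle — is to spread the identity $L(\mathrsfs{B}^{2},(p_{1},p_{2}))=L(\mathrsfs{B}_{\mathcal{I}},p_{1})$ from the base vertex to every $q\in V(C)$, i.e.\ to prove $L(\mathrsfs{B}^{2},(q,\phi(q)))=L(\mathrsfs{B}_{\mathcal{I}},q)$. The inclusion $\subseteq$ is immediate by projecting loops via $\pi_{1}$. For $\supseteq$, take $w\in L(\mathrsfs{B}_{\mathcal{I}},q)$ and a path label $v$ from $p_{1}$ to $q$ in the inverse graph $\mathrsfs{B}_{\mathcal{I}}$; then $vwv^{-1}\in L(\mathrsfs{B}_{\mathcal{I}},p_{1})=L(\mathrsfs{B}^{2},(p_{1},p_{2}))$, so there is a loop at $(p_{1},p_{2})$ in $\mathrsfs{B}^{2}$ labelled $vwv^{-1}$ on its input. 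Tracking it: after reading $v$ it sits at $(p_{1},p_{2})\cdot v\in V(D)$, whose first coordinate is $p_{1}\cdot v=q$, hence this vertex is $(q,\phi(q))$ by bijectivity of $\pi_{1}$; then reading $w$ from $(q,\phi(q))$ reaches a vertex of $D$ with first coordinate $q\cdot w=q$, hence again $(q,\phi(q))$. Thus $w\in L(\mathrsfs{B}^{2},(q,\phi(q)))$, giving the equality. Now Proposition \ref{prop: norm of product} applied to $\mathrsfs{A}=\mathrsfs{B}$ with the two vertices $q$ and $\phi(q)$, namely the implication (iii)$\Rightarrow$(ii), yields $L(\mathrsfs{B}_{\mathcal{O}},q)=\mathrsfs{B}_{q}\!\left(L(\mathrsfs{B}_{\mathcal{I}},q)\right)\subseteq L(\mathrsfs{B}_{\mathcal{I}},\phi(q))=L(\mathrsfs{B},\phi(q))$ for every $q\in V(C)$.

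It remains to extract the cycle. Since $V(C)$ is finite and $\phi$ maps it into itself, pick the least $t\ge 0$ and least $\ell\ge 1$ with $\phi^{t+\ell}(p_{1})=\phi^{t}(p_{1})$ and set $q_{i}=\phi^{t+i}(p_{1})$ for $0\le i\le m:=\ell-1$. These are distinct vertices of $(\mathrsfs{B},p_{1})$ with $\phi(q_{i})=q_{i+1}$ for $i<m$ and $\phi(q_{m})=q_{0}$, so the inclusion just proved gives $L(\mathrsfs{B}_{\mathcal{O}},q_{i})\subseteq L(\mathrsfs{B},q_{i+1})$ for $i=0,\dots,m-1$ and $L(\mathrsfs{B}_{\mathcal{O}},q_{m})\subseteq L(\mathrsfs{B},q_{0})$, as required. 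The delicate point throughout is the conjugation-by-$v$ argument of the previous paragraph; it works precisely because $\mathrsfs{B}_{\mathcal{I}}$ is an inverse (in particular involutive) graph, so $v^{-1}$ genuinely labels a path back to $p_{1}$.
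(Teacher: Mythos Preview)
Your proof is correct. The core mechanism---conjugation by a path label to transport the swapping inclusion along the component, then using finiteness to extract a cycle---is the same as the paper's. The presentation differs: the paper proceeds iteratively, picking a path $p_{1}\longfr{h_{1}}{h_{1}'}p_{2}$, setting $p_{3}=p_{2}\cdot h_{1}'$, and directly showing $L(\mathrsfs{B}_{\mathcal{O}},p_{2})\subseteq L(\mathrsfs{B},p_{3})$ by conjugating a loop at $p_{2}$ by $h_{1}$ to produce a loop at $p_{1}$, applying the hypothesis, and unconjugating. You instead package the same idea globally, using Proposition~\ref{prop: norm of product} to read off that $\pi_{1}$ is a bijection from the component $D$ of $(p_{1},p_{2})$ in $\mathrsfs{B}^{2}$ onto $C$, which yields a well-defined self-map $\phi$ on \emph{all} of $C$ with the swapping inclusion at every pair $(q,\phi(q))$. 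Your $\phi$ is exactly the paper's step $p_{i}\mapsto p_{i+1}$: indeed $(p_{1},p_{2})\cdot h_{1}=(p_{2},p_{3})$ in $\mathrsfs{B}^{2}$. What your route buys is a slightly stronger intermediate statement (swapping inclusion at every vertex of the component, not just along one orbit) and a cleaner invocation of Proposition~\ref{prop: norm of product}; what the paper's route buys is brevity, avoiding the product automaton and the bijectivity argument.
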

\begin{proof}
Let $p_{1}\longfr{h_{1}}{h_{1}'}p_{2}$ be a path in $(\mathrsfs{B},p_{1})$ connecting $p_{1}$ with $p_{2}$ for some $h_{1}\in \wt{A}^{*}$. Put $h_{2}=h_{1}'$, and let $p_{3}$ be the vertex such that $p_{2}\longfr{h_{2}}{h_{2}'}p_{3}$ is a path in $(\mathrsfs{B},p_{1})$. We claim that $L(\mathrsfs{B}_{\mathcal{O}},p_{2})\subseteq L(\mathrsfs{B},p_{3})$. Thus, let $v'\in L(\mathrsfs{B}_{\mathcal{O}},p_{2})$, and let $v\in  L(\mathrsfs{B},p_{2})$ with $p_{2}\longfr{v}{v'}p_{2}$. Hence there is a path
$$
p_{1}\exlongfr{h_{1}vh_{1}^{-1}}{h_{2}v'h_{2}^{-1}}p_{1}
$$
Since $(\mathrsfs{B},p_{1})$ has the swapping inclusion property with respect to the pair $(p_{1},p_{2})$ we get $h_{2}v'h_{2}^{-1}\in L(\mathrsfs{B},p_{2})$, and so using the fact that $\mathrsfs{B}$ is inverse we get $v'\in L(\mathrsfs{B},p_{3})$, and this prove the claim $L(\mathrsfs{B}_{\mathcal{O}},p_{2})\subseteq L(\mathrsfs{B},p_{3})$. Continuing in this way we can find a sequence $p_{1},p_{2},\ldots, p_{k},\ldots$ of vertices of $(\mathrsfs{B},p_{1})$ such that $L(\mathrsfs{B}_{\mathcal{O}},p_{i})\subseteq L(\mathrsfs{B},p_{i+1})
$ for all $i\ge 0$. Since $(\mathrsfs{B},p_{1})$ is finite, let $j,m+1$ be the integers such that $p_{j}=p_{j+m+1}$, the statement is satisfied considering the sequence $q_{0}=p_{j}, q_{2}=p_{j+1}, \ldots, q_{m}=p_{j+m}$.
\end{proof}
Henceforth we use the notation $H_{\ul{v}}=\St_{G}(\ul{v})$ for any $\ul{u}\in A^{\omega}$. We have the following sufficient conditions to have finite Schreier graphs for a periodic point in the boundary.
\begin{prop}
Let $\mathrsfs{A}=(Q,A,\cdot,\circ)$ be an invertible transducer, and let $\mathrsfs{C}=(\partial\mathrsfs{A})^{-}=(A,\wt{Q},\circ,\cdot)$. If a connected component $(\mathrsfs{C}^{k},p)$ has the swapping inclusion property, then there is a periodic point $\underline{v}\in A^{\omega}_{p}$ such that $\Sch((H_{\ul{v}},A),H_{\ul{v}})$ is finite.
\end{prop}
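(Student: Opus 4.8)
The strategy is to feed the swapping inclusion property into Lemma~\ref{lem: swapping inclusion}, use the resulting cyclic chain of language inclusions to keep the sizes of the components $((\mathrsfs{C}^{nk})_{\mathcal{I}},\,\cdot\,)$ bounded along a suitable periodic point, and then read off the Schreier graph on the boundary as the projective limit of these components via Theorem~\ref{thm:schreier}. Write $G=\mathcal{G}(\mathrsfs{A})$ and $\mathcal{D}_{j}=(\mathrsfs{C}^{j})_{\mathcal{I}}$. By Proposition~\ref{prop: dual prop}, Lemma~\ref{lemma: enriching} and Proposition~\ref{prop: product reversible}, every power $\mathrsfs{C}^{j}$ is again an inverse transducer, so Proposition~\ref{prop: norm of product} applies to products of such powers.

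First I would apply Lemma~\ref{lem: swapping inclusion} to $\mathrsfs{B}=\mathrsfs{C}^{k}$ and the pair $(p_{1},p_{2})$ for which $(\mathrsfs{C}^{k},p)$ has the swapping inclusion property. This produces distinct vertices $q_{0},q_{1},\dots,q_{m}$ of the component $(\mathrsfs{C}^{k},p)$ — hence words $q_{i}\in A^{k}$ — with $L((\mathrsfs{C}^{k})_{\mathcal{O}},q_{i})\subseteq L(\mathrsfs{C}^{k},q_{i+1})$ for $i=0,\dots,m-1$ and $L((\mathrsfs{C}^{k})_{\mathcal{O}},q_{m})\subseteq L(\mathrsfs{C}^{k},q_{0})$. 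Take $\underline{v}=(q_{0}q_{1}\cdots q_{m})^{\omega}\in A^{\omega}_{p}$; its prefix of length $nk$ is the word $V_{n}=q_{0}q_{1}\cdots q_{n-1}$, with indices read cyclically modulo $m+1$. This $\underline{v}$ will be the sought periodic point.

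The core step is to prove $\|(\mathrsfs{C}^{nk},V_{n})\|=\|(\mathrsfs{C}^{k},q_{0})\|$ for all $n\ge 1$, by induction on $n$. Identifying $\mathrsfs{C}^{(n+1)k}$ with the product $\mathrsfs{C}^{nk}\mathrsfs{C}^{k}$ (by regrouping the state tuple), we have $V_{n+1}=(V_{n},q_{n})$ with $q_{n}$ read cyclically, so by the equivalence i)$\,\Leftrightarrow\,$ii) of Proposition~\ref{prop: norm of product} it suffices to check $L((\mathrsfs{C}^{nk})_{\mathcal{O}},V_{n})\subseteq L(\mathrsfs{C}^{k},q_{n})$. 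For $n=1$ this is the inclusion $L((\mathrsfs{C}^{k})_{\mathcal{O}},q_{0})\subseteq L(\mathrsfs{C}^{k},q_{1})$ supplied by Lemma~\ref{lem: swapping inclusion}. For $n\ge 2$ I would first record the general inclusion $L((\mathrsfs{X}\mathrsfs{Y})_{\mathcal{O}},(s,t))\subseteq L(\mathrsfs{Y}_{\mathcal{O}},t)$ for inverse transducers $\mathrsfs{X},\mathrsfs{Y}$: indeed the output map of $\mathrsfs{X}\mathrsfs{Y}$ at $(s,t)$ is $u\mapsto\mathrsfs{Y}_{t}(\mathrsfs{X}_{s}(u))$, and since $L(\mathrsfs{X}\mathrsfs{Y},(s,t))=L(\mathrsfs{X}_{\mathcal{I}},s)\cap\mathrsfs{X}_{s}^{-1}\!\left(L(\mathrsfs{Y}_{\mathcal{I}},t)\right)$ (the equality used in the proof of Proposition~\ref{prop: norm of product}), its image lands inside $\mathrsfs{Y}_{t}\!\left(L(\mathrsfs{Y}_{\mathcal{I}},t)\right)=L(\mathrsfs{Y}_{\mathcal{O}},t)$. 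Applying this with $\mathrsfs{X}=\mathrsfs{C}^{(n-1)k}$ and $\mathrsfs{Y}=\mathrsfs{C}^{k}$ gives $L((\mathrsfs{C}^{nk})_{\mathcal{O}},V_{n})\subseteq L((\mathrsfs{C}^{k})_{\mathcal{O}},q_{n-1})$, and composing with the cyclic chain inclusion $L((\mathrsfs{C}^{k})_{\mathcal{O}},q_{n-1})\subseteq L(\mathrsfs{C}^{k},q_{n})$ closes the induction. Hence all the $\|(\mathrsfs{C}^{nk},V_{n})\|$ equal $N:=\|(\mathrsfs{C}^{k},q_{0})\|$.

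Finally I would pass to the boundary. Since $G$ acts on $A^{*}$ by tree automorphisms, the canonical map $\mathcal{D}_{j}\to\mathcal{D}_{j-1}$ restricts to a surjection of the connected component of $\underline{v}[j]$ onto that of $\underline{v}[j-1]$, so $j\mapsto\|(\mathcal{D}_{j},\underline{v}[j])\|$ is non-decreasing; together with $\|(\mathcal{D}_{nk},\underline{v}[nk])\|=N$ for all $n$ this forces $\|(\mathcal{D}_{j},\underline{v}[j])\|\le N$ for every $j$. Thus the inverse system $\{(\mathcal{D}_{j},\underline{v}[j])\}_{j\ge 1}$ consists of finite graphs of bounded cardinality with surjective bonding maps, so the sizes stabilise, the bonding maps are eventually bijective, and the limit $(\mathcal{D}_{\mathcal{I}}^{\infty},\underline{v})=\varprojlim\{(\mathcal{D}_{j},\underline{v}[j])\}$ is finite. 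By Theorem~\ref{thm:schreier}(ii), $(\Sch(H_{\underline{v}},A),H_{\underline{v}})\simeq(\mathcal{D}_{\mathcal{I}}^{\infty},\underline{v})$, so $\Sch(H_{\underline{v}},A)$ is finite, as desired. The point needing the most care is the bookkeeping in the core step — keeping the cyclic indices consistent while identifying $\mathrsfs{C}^{nk}\mathrsfs{C}^{k}$ with $\mathrsfs{C}^{(n+1)k}$ and $V_{n}$ with a prefix of $V_{n+1}$; once this is in place, the passage from a cofinal bound to finiteness of the projective limit is a routine compactness argument.
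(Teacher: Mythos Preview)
Your proof is correct and follows essentially the same route as the paper: apply Lemma~\ref{lem: swapping inclusion} to $\mathrsfs{B}=\mathrsfs{C}^{k}$, take $\underline{v}=(q_{0}\cdots q_{m})^{\omega}$, and use Proposition~\ref{prop: norm of product} inductively to keep the component sizes constant along the prefixes of $\underline{v}$. The only differences are in the execution of the two final steps. In the inductive step, the paper carries along the language \emph{equality} $L(\mathrsfs{B}^{j+2},(q_{0},\ldots,q_{j+1}))=L(\mathrsfs{B}^{j+1},(q_{0},\ldots,q_{j}))$ (via i)$\Leftrightarrow$iii) of Proposition~\ref{prop: norm of product}) and uses it to push the output through, whereas you sidestep this with the direct observation $L((\mathrsfs{X}\mathrsfs{Y})_{\mathcal{O}},(s,t))\subseteq L(\mathrsfs{Y}_{\mathcal{O}},t)$ --- a minor simplification, since the induction hypothesis is then only needed to chain the size equalities. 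For the passage to the boundary, the paper reads off $L(\mathrsfs{B}^{\infty},w^{\omega})=L(\mathrsfs{B},q_{0})$ directly from the stabilised languages and then invokes Theorem~\ref{thm:schreier}(ii), while you argue by bounded sizes and eventual bijectivity of the bonding maps in the inverse system; both are standard and equivalent here.
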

\begin{proof}
Let $\mathrsfs{B}=\mathrsfs{C}^{k}$, and let $q_{0},\ldots, q_{m}$ be the vertices of $(\mathrsfs{B},p)$ as in Lemma \ref{lem: swapping inclusion}. Put $w=q_{0}\ldots q_{m}$, and consider $\ul{v}=w^{\omega}$, we claim that $\Sch((H_{\ul{v}},A),H_{\ul{v}})$ is finite. We prove by induction that for any $j\ge 0$ the following inclusion holds:
\begin{equation}\label{eq: induction inclusion}
\mathrsfs{B}_{q_{j}\ldots q_{0}}\left(L(\mathrsfs{B}^{j+1},(q_{0},\ldots,q_{j}))\right)\subseteq L(\mathrsfs{B},q_{(j+1)\: mod\:m})
\end{equation}
Note that by Proposition \ref{prop: norm of product}, the previous statement is equivalent to
\begin{equation}\label{eq: swapping equality}
L(\mathrsfs{B}^{j+2},(q_{0},\ldots,q_{j+1}))=L(\mathrsfs{B}^{j+1},(q_{0},\ldots,q_{j}))
\end{equation}
Since $\mathrsfs{B}_{q_{0}}\left(L(\mathrsfs{B},q_{0})\right)\subseteq L(\mathrsfs{B},q_{1})$, the base case is satisfied. Thus, suppose the claim true for $j$ and let us prove it for $j+1$. Thus, by (\ref{eq: induction inclusion}) and (\ref{eq: swapping equality}) we get the following inclusions:
\begin{eqnarray}
\nonumber  \mathrsfs{B}_{q_{j+1}}\mathrsfs{B}_{q_{j}\ldots q_{0}}\left(L(\mathrsfs{B}^{j+2},(q_{0},\ldots,q_{j+1}))\right)=\mathrsfs{B}_{q_{j+1}}\mathrsfs{B}_{q_{j}\ldots q_{0}}\left(L(\mathrsfs{B}^{j+1},(q_{0},\ldots,q_{j}))\right)\subseteq\\
\nonumber \subseteq\mathrsfs{B}_{q_{j+1}}\left( L(\mathrsfs{B},q_{(j+1)\: mod\:m})\right) = L(\mathrsfs{B}_{\mathcal{O}},q_{(j+1)\: mod\:m})\subseteq  L(\mathrsfs{B},q_{(j+2)\: mod\:m})
\end{eqnarray}
hence (\ref{eq: induction inclusion}) holds for $j+1$. By (\ref{eq: swapping equality}) we have $L(\mathrsfs{B}^{\infty},w^{\omega})=L(\mathrsfs{B},q_{0})$, whence by Theorem \ref{thm:schreier} $(\mathrsfs{B}^{\infty},w^{\omega})\simeq \Sch((H_{\ul{v}},A),H_{\ul{v}})$ is finite.
\end{proof}
To have a result which also holds in the other direction we need to look at points in the boundary that generalize the notion of periodic points. We recall that a point $w\in A^{\omega}$ is called \emph{almost periodic} (sometimes called also \emph{pre-periodic}) if there is $x,y\in A^{*}$ such that $w=xy^{\omega}$. With this notion we characterize the case when we have a finite Schreier graph in the boundary. First we need to introduce a growth function. Let $\mathrsfs{A}=(Q,A,\cdot,\circ)$ be an invertible transducer, we consider the following \emph{components growth function} defined by
$$
\chi_{\mathrsfs{A}}(n)=\min\{\|\left(((\partial\mathrsfs{A})^{-})^{n}, q\right)\|: q\in A^{n})\}
$$
Note that $\chi_{\mathrsfs{A}}(n)$ is monotonically increasing, and by simple computation shows that $\chi_{\mathrsfs{A}}(n)\le \|\partial\mathrsfs{A}\|^{n}$ holds.
\begin{theorem}\label{theo: finiteness schreier}
Let $\mathrsfs{A}=(Q,A,\cdot,\circ)$ be an invertible transducer, and let $m\ge 1$ be an integer. Then, the following are equivalent
\begin{enumerate}
  \item[i)] There exists $\ul{v}\in A^{\omega}$ such that $\| \Sch((H_{\ul{v}},A),H_{\ul{v}}) \|=\chi_{\mathrsfs{A}}(m)$, and we have that $\Sch((H_{\ul{v}},A),H_{\ul{v}})$ is the smallest Schreier graph among the ones in the boundary;
  \item[ii)] There exists $\ul{v}\in A^{\omega}$ such that $\| \Sch((H_{\ul{v}},A),H_{\ul{v}}) \|\leq \chi_{\mathrsfs{A}}(m)$;
  \item[iii)] There exists an almost periodic element $\ul{v}=xy^{\omega}$ such that $|x|+|y|\le m+(|A|\|\partial\mathrsfs{A}\|^{m})^{|A|^{2}}$ and $\| \Sch((H_{\ul{v}},A),H_{\ul{v}}) \|\leq \chi_{\mathrsfs{A}}(m)$;
  \item[iv)] $\chi_{\mathrsfs{A}}(m)$=$\chi_{\mathrsfs{A}}(m+i)$ for all $ i\le  (|A|\|\partial\mathrsfs{A}\|^{m})^{|A|^{2}}$;
  \item[v)] $\chi_{\mathrsfs{A}}(m)$=$\chi_{\mathrsfs{A}}(m+i)$ for all $i\ge 0$.
\end{enumerate}
\end{theorem}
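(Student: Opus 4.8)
I would establish the ring $(i)\Rightarrow(ii)\Rightarrow(v)\Rightarrow(iv)\Rightarrow(iii)\Rightarrow(ii)$ together with the extra arrow $(v)\Rightarrow(i)$, which makes all five statements equivalent. Throughout write $\mathcal{D}_{k}=((\partial\mathrsfs{A})^{-})^{k}_{\mathcal{I}}$ and $c=\chi_{\mathrsfs{A}}(m)$. The dictionary linking the two sides of each implication is Theorem~\ref{thm:schreier}(ii): for $\ul{v}\in A^{\omega}$ one has $\Sch((H_{\ul{v}},A),H_{\ul{v}})\simeq(\mathcal{D}^{\infty}_{\mathcal{I}},\ul{v})=\varprojlim(\mathcal{D}_{k},\ul{v}[k])$; the bonding maps $\phi_{k+1,k}$ are onto (every state and transition of $\mathcal{D}_{k}$ extends, since $(\partial\mathrsfs{A})^{-}$ is complete), so the sequence $\|(\mathcal{D}_{k},\ul{v}[k])\|$ is non-decreasing in $k$ — equivalently, the inequality $\|(\mathcal{D}_{k+1},w)\|\ge\|(\mathcal{D}_{k},w[k])\|$ of Proposition~\ref{prop: norm of product} — and $\|\Sch((H_{\ul{v}},A),H_{\ul{v}})\|=\sup_{k}\|(\mathcal{D}_{k},\ul{v}[k])\|$. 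By the definition of $\chi_{\mathrsfs{A}}$ and its monotonicity, $\|(\mathcal{D}_{k},\ul{v}[k])\|\ge\chi_{\mathrsfs{A}}(k)\ge c$ whenever $k\ge m$, so every Schreier graph on the boundary has at least $c$ vertices.

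With this in hand, $(i)\Rightarrow(ii)$ and $(iii)\Rightarrow(ii)$ are immediate, and $(v)\Rightarrow(iv)$ is trivial. For $(ii)\Rightarrow(v)$: if $\ul{v}$ witnesses $(ii)$ then $\chi_{\mathrsfs{A}}(k)\le\|(\mathcal{D}_{k},\ul{v}[k])\|\le\sup_{j}\|(\mathcal{D}_{j},\ul{v}[j])\|=\|\Sch((H_{\ul{v}},A),H_{\ul{v}})\|\le c$ for every $k$, so by monotonicity $\chi_{\mathrsfs{A}}(k)=c$ for all $k\ge m$. For $(v)\Rightarrow(i)$ I would run a König compactness argument: let $T\subseteq A^{*}$ be the set of words all of whose prefixes of length $\ge m$ have minimal component (of size $c$); under $(v)$, truncating a minimal word again gives a minimal word, so $T$ is prefix-closed and meets every level (the length-$k$ prefixes of any element of the non-empty set $\{w\in A^{k}:\|(\mathcal{D}_{k},w)\|=c\}$ already lie in $T$). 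Being finitely branching, $T$ has an infinite branch $\ul{v}$, and then $\|\Sch((H_{\ul{v}},A),H_{\ul{v}})\|=\sup_{k}\|(\mathcal{D}_{k},\ul{v}[k])\|=c$, which by the first paragraph is the least value attained on the boundary, giving $(i)$.

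The substantial step is $(iv)\Rightarrow(iii)$, a pumping argument. Put $L=(|A|\,\|\partial\mathrsfs{A}\|^{m})^{|A|^{2}}$, so $(iv)$ reads $\chi_{\mathrsfs{A}}(k)=c$ for $m\le k\le m+L$. To a minimal word $w\in A^{k}$ with $m\le k\le m+L$ I attach a \emph{type} $\theta(w)$: the isomorphism class of the connected component of $w$ in $((\partial\mathrsfs{A})^{-})^{k}$, regarded as a pointed transducer together with the colouring of its states by their last letter of $A$. The point of carrying exactly this data is that, by Proposition~\ref{prop: norm of product}, the predicate ``$wa$ is again minimal'' amounts to the inclusion of the output language of this component in one of the at most $|A|$ languages $L(\mathcal{D}_{1},a)$, and when it holds the component of $wa$, hence $\theta(wa)$, is determined; thus both the predicate and the value $\theta(wa)$ depend only on $\theta(w)$ and $a$. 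The same accounting as in the proof of Proposition~\ref{prop: finiteness} bounds the number of types by $L$. Now pick a minimal $w_{m}\in A^{m}$ admitting a descendant in the non-empty set $\{z\in A^{m+L}:\|(\mathcal{D}_{m+L},z)\|=c\}$ (all prefixes of such a $z$ are minimal), and walk down, always choosing a minimal child that still reaches level $m+L$; this yields $w_{m},\dots,w_{m+L}$, i.e. $L+1$ types from a set of size at most $L$, so $\theta(w_{n_{1}})=\theta(w_{n_{2}})$ for some $m\le n_{1}<n_{2}\le m+L$. Writing $x=w_{n_{1}}$ and $w_{n_{2}}=xy$, set $\ul{v}=xy^{\omega}$: since a minimal one-letter extension transports the type functorially, reading the block $y$ from $w_{n_{2}}$ (of type $\theta(x)$) repeats the minimal extensions read from $w_{n_{1}}$, so by induction every prefix of $\ul{v}$ of length $\ge m$ is minimal, while every shorter prefix is a prefix of $x$ and hence of component size at most $\|(\mathcal{D}_{n_{1}},x)\|=c$. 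Therefore $\|\Sch((H_{\ul{v}},A),H_{\ul{v}})\|=\sup_{k}\|(\mathcal{D}_{k},\ul{v}[k])\|\le c$, while $\ul{v}=xy^{\omega}$ is almost periodic with $|x|+|y|=n_{2}\le m+L$, which is $(iii)$; then $(iii)\Rightarrow(ii)$ closes the ring.

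The hard part is pinning down the notion of type used in $(iv)\Rightarrow(iii)$: it must be simultaneously \emph{finite}, with the bound $(|A|\,\|\partial\mathrsfs{A}\|^{m})^{|A|^{2}}$ coming from the geometry of the dual transducer exactly as in Proposition~\ref{prop: finiteness}, and \emph{local}, i.e. with its one-step transitions governed entirely by Proposition~\ref{prop: norm of product}; once both requirements are met, the pigeonhole genuinely produces an eventually periodic minimal branch. Everything else is routine bookkeeping with monotone sequences of component sizes and a single application of König's lemma.
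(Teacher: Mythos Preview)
Your argument is correct and follows essentially the same route as the paper: the trivial implications are the same, your $(ii)\Rightarrow(v)$ is the direct version of the paper's contrapositive, and your pumping step $(iv)\Rightarrow(iii)$ is exactly the paper's pigeonhole on isomorphism classes of the pointed transducers $(\mathrsfs{B}^{m+i},q_{1}\ldots q_{m+i})$, phrased in terms of ``types''. Two minor remarks: the last-letter colouring you attach to a type is not needed---the pointed transducer structure alone already determines both whether a one-letter extension remains minimal (via Proposition~\ref{prop: norm of product}) and the resulting pointed component---so dropping it brings your notion of type in exact alignment with the paper's and leaves the counting bound unchanged; and your K\"onig argument for $(v)\Rightarrow(i)$, while correct, is avoidable, since the observation in your first paragraph that every boundary Schreier graph has at least $\chi_{\mathrsfs{A}}(m)$ vertices already gives $(ii)\Rightarrow(i)$ directly, which is how the paper handles $i)\Leftrightarrow ii)$.
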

\begin{proof}
$i)\Leftrightarrow ii)$ is a consequence of the definitions. \\
$ii)\Rightarrow v)$. Suppose that $\lim_{i\to\infty}\chi_{\mathrsfs{A}}(m+i)=\infty$, hence by Theorem \ref{thm:schreier} all the Schreier graph are infinite, i.e. $\| \Sch((H_{\ul{v}},A),H_{\ul{v}}) \|=\infty$ for all $\ul{u}\in A^{\omega}$.\\
$v)\Rightarrow iv)$. Trivial.\\
$iv)\Rightarrow iii)$ Put $\mathrsfs{B}=(\partial\mathrsfs{A})^{-}$. Condition iv) implies that there is a connected component $(\mathrsfs{B}^{m}, (q_{1},\ldots,q_{m}))$ of cardinality $\chi_{\mathrsfs{A}}(m)$ and a finite sequence $q_{m+i}$ for $0\le i\le  (|A|\|\partial\mathrsfs{A}\|^{m})^{|A|^{2}}$ of vertices of $\mathrsfs{B}$ such that for all $0\le i\le  (|A|\|\partial\mathrsfs{A}\|^{m})^{|A|^{2}}$
$$
\|(\mathrsfs{B}^{m+i}, q_{1}\ldots q_{m+i}))\|=\chi_{\mathrsfs{A}}(m)
$$
Hence, by Proposition \ref{prop: norm of product} and Proposition \ref{prop: immersion lang hom} this sequence of transducers have all isomorphic input automata, i.e. for all $0\le i\le  (|A|\|\partial\mathrsfs{A}\|^{m})^{|A|^{2}}-1$ we have
\begin{equation}\label{eq: all isom input auto}
\left((\mathrsfs{B}^{m+i})_{\mathcal{I}}, q_{1}\ldots q_{m+i}\right)\simeq \left((\mathrsfs{B}^{m+i+1})_{\mathcal{I}}, q_{1}\ldots q_{m+i+1}\right)
\end{equation}
Now, if we see $\left(\mathrsfs{B}^{m+i}, q_{1}\ldots q_{m+i}\right)$ with $0\le i\le  (|A|\|\partial\mathrsfs{A}\|^{m})^{|A|^{2}}$, as inverse transducers, or equivalently as $\wt{A}\times\wt{A}$-automata, by (\ref{eq: all isom input auto}) and a straightforward computation it is easy to check that there are at most $(|A|\|\partial\mathrsfs{A}\|^{m})^{|A|^{2}}$ possible inverse transducers with $\|(\mathrsfs{B}^{m}, q_{1}\ldots q_{m}))\|$ states. Therefore, there are two integers $k,p$ with $k+p\le  (|A|\|\partial\mathrsfs{A}\|^{m})^{|A|^{2}}$ such that the following isomorphism (as $\wt{A}\times\wt{A}$-automata)
\begin{equation}\label{eq: repetition as transducers}
\left(\mathrsfs{B}^{m+k}, q_{1} \ldots q_{m+k}\right)\simeq \left(\mathrsfs{B}^{m+k+p}, q_{1} \ldots q_{m+k+p}\right)
\end{equation}
holds.
Consider the words $x=q_{1}\ldots q_{m+k}$, $y=q_{m+k+1}\ldots q_{m+k+p}$, and the almost periodic point $\ul{v}=xy^{\omega}$. Hence, we clearly have
$$
|x|+|y|\le m+(|A|\|\partial\mathrsfs{A}\|^{m})^{|A|^{2}}
$$
and we claim that $\|\Sch((H_{\ul{v}},A),H_{\ul{v}})\|\le \chi_{\mathrsfs{A}}(m)$ holds. Since (\ref{eq: repetition as transducers}) holds, then it is not difficult to check that
$$
\left(\mathrsfs{B}^{m+k+p}, xy\right)\simeq \left(\mathrsfs{B}^{m+k+tp}, xy^{t}\right)
$$
holds for any $t\ge 1$. Therefore, by (\ref{eq: all isom input auto}) we get $\left(\mathrsfs{B}^{\infty}_{\mathcal{I}}, xy^{\omega}\right)\simeq \left((\mathrsfs{B}^{m+k})_{\mathcal{I}}, x\right)$, whence by Theorem \ref{thm:schreier}
$$
 \left((\mathrsfs{B}^{m+k})_{\mathcal{I}}, x\right)\simeq \left(\mathrsfs{B}^{\infty}_{\mathcal{I}}, xy^{\omega}\right)\simeq \Sch((H_{\ul{v}},A),H_{\ul{v}})
$$
is a finite Schreier graph with $\|\Sch((H_{\ul{v}},A),H_{\ul{v}})\|=\|\left((\mathrsfs{B}^{m+k})_{\mathcal{I}}, x\right)\|=\chi_{\mathrsfs{A}}(m)$, and this concludes the proof of the implication.\\
$iii)\Rightarrow ii)$. Trivial.
\end{proof}
Put $c=\|\partial\mathrsfs{A}\|$ and let us define the following function:
$$
\zeta_{\mathrsfs{A}}(n)=\max\left\{y:n\ge |A|^{|A|^{2}}\left(\frac{c^{|A|^{2}(y+1)}-c^{|A|^{2}}}{c^{|A|^{2}}-1}\right)-\frac{y(y+1)}{2} \right\}
$$
It is easy to check that $\zeta_{\mathrsfs{A}}(n)$ is also monotonically increasing, and a rough lower bound for this function is given by
$$
\zeta_{\mathrsfs{A}}(n)\ge \frac{1}{|A|^{2}}\log_{c}\left(\frac{(c^{|A|^{2}}-1)n}{|A|^{|A|^{2}}}\right)-1
$$
From the previous theorem we obtain the following gap result.
\begin{cor}\label{cor: lower bound on chi}
Given an invertible transducer $\mathrsfs{A}$, then either $\chi_{\mathrsfs{A}}(n)$ stabilizes for a certain integer $m$, i.e. $\chi_{\mathrsfs{A}}(m+i)=\chi_{\mathrsfs{A}}(i)$ for all $i\ge 0$, or $\chi_{\mathrsfs{A}}(n)$ is not upper bounded and $ \chi_{\mathrsfs{A}}(n)\ge \zeta_{\mathrsfs{A}}(n)$. In particular, if there is a Schreier graph $\| \Sch((H_{v},A),H_{v}) \|< \zeta_{\mathrsfs{A}}(m)$ for some $v\in A^{m}$, then  there is an almost periodic point $\ul{v}\in A^{\omega}$ such that $\| \Sch((H_{\ul{v}},A),H_{\ul{v}}) \|\leq \chi_{\mathrsfs{A}}(m)$.
\end{cor}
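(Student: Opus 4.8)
The plan is to deduce the corollary from Theorem~\ref{theo: finiteness schreier} by analysing the ``plateaus'' of the monotone function $\chi_{\mathrsfs{A}}$. Since $\chi_{\mathrsfs{A}}$ is non-decreasing and integer valued, it is either bounded --- hence eventually constant, which is exactly the stabilisation alternative and puts us in case $v)$ of Theorem~\ref{theo: finiteness schreier} --- or unbounded. So the only thing left to prove is that if $\chi_{\mathrsfs{A}}$ is unbounded then $\chi_{\mathrsfs{A}}(n)\ge\zeta_{\mathrsfs{A}}(n)$ for all $n$, together with the final ``in particular''. Write $\mathrsfs{B}=(\partial\mathrsfs{A})^{-}$, $c=\|\partial\mathrsfs{A}\|$, and for $v\ge 1$ let $\ell(v)=\min\{n:\chi_{\mathrsfs{A}}(n)\ge v\}$, which is finite because $\chi_{\mathrsfs{A}}$ is unbounded; note $\ell(1)\le 1$. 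The key estimate is a bound on the plateau length $\ell(v+1)-\ell(v)$.

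For the plateau bound, fix $v$ and suppose $\chi_{\mathrsfs{A}}$ equals $v$ on $\{\ell(v),\dots,\ell(v)+L\}$. Pick a vertex of $A^{\ell(v)+L}$ realising a component of $\mathrsfs{B}^{\ell(v)+L}$ of minimal size $v$; by the invertible case of Proposition~\ref{prop: norm of product invertible} (applied to the factorisations of $\mathrsfs{B}^{k+1}$ as $\mathrsfs{B}^{k}\mathrsfs{B}$ and as $\mathrsfs{B}\,\mathrsfs{B}^{k}$) together with the monotonicity of the norm under products, each prefix $q_{1}\ldots q_{k}$ with $\ell(v)\le k\le\ell(v)+L$ realises a component of $\mathrsfs{B}^{k}$ of size exactly $v$, and all the input automata $\left((\mathrsfs{B}^{k})_{\mathcal{I}},q_{1}\ldots q_{k}\right)$ recognise the same language, hence are isomorphic by Proposition~\ref{prop: immersion lang hom}. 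Since there are only finitely many inverse-transducer structures carried by a fixed inverse automaton on $v$ states, once $L$ exceeds that number $g(v)$ two of the transducers $\left(\mathrsfs{B}^{k},q_{1}\ldots q_{k}\right)$ coincide; extending the corresponding word periodically (exactly as in the proof of Theorem~\ref{theo: finiteness schreier}) produces an almost periodic point all of whose finite Schreier graphs have size $v$, which by monotonicity forces $\chi_{\mathrsfs{A}}$ to be constant from that level on --- contradicting unboundedness. Hence $L<g(v)$, and feeding in the explicit bound for $g(v)$ (of the shape $(|A|c^{v})^{|A|^{2}}$ used in Theorem~\ref{theo: finiteness schreier}, with the additive correction absorbed) gives $\ell(v+1)-\ell(v)\le(|A|c^{v})^{|A|^{2}}-v$.

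Iterating from $v=1$ yields $\ell(v)\le 1+\sum_{w=1}^{v-1}\bigl((|A|c^{w})^{|A|^{2}}-w\bigr)$, and collecting the geometric sum of the $c^{w|A|^{2}}$ together with the triangular sum $\sum_{w<v}w$ rewrites the right-hand side as precisely the quantity $F(v-1)$ sitting inside the braces in the definition of $\zeta_{\mathrsfs{A}}$ (this is where the subtracted $\tfrac{y(y+1)}{2}$ comes from). Since $\chi_{\mathrsfs{A}}(n)\ge v$ whenever $n\ge\ell(v)$, and since $F(y)-F(y-1)=(|A|c^{y})^{|A|^{2}}-y$ dominates all lower-order corrections, one checks $\ell(\zeta_{\mathrsfs{A}}(n))\le F(\zeta_{\mathrsfs{A}}(n))\le n$, so $\chi_{\mathrsfs{A}}(n)\ge\zeta_{\mathrsfs{A}}(n)$; the crude logarithmic lower bound for $\zeta_{\mathrsfs{A}}$ is then an elementary estimate. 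For the ``in particular'' part, if some $v\in A^{m}$ satisfies $\|\Sch((H_{v},A),H_{v})\|<\zeta_{\mathrsfs{A}}(m)$ then $\chi_{\mathrsfs{A}}(m)<\zeta_{\mathrsfs{A}}(m)$, because by Theorem~\ref{thm:schreier} the value $\chi_{\mathrsfs{A}}(m)$ is the minimum of $\|\Sch((H_{v'},A),H_{v'})\|$ over $v'\in A^{m}$; by the dichotomy just proved $\chi_{\mathrsfs{A}}$ cannot be unbounded, so it stabilises, and following the levels in the iteration shows its last increase already occurred at some level $m_{0}\le m$. Thus condition $v)$ of Theorem~\ref{theo: finiteness schreier} holds at level $m_{0}$, and implication $v)\Rightarrow iii)$ of that theorem supplies an almost periodic $\ul{v}\in A^{\omega}$ with $\|\Sch((H_{\ul{v}},A),H_{\ul{v}})\|\le\chi_{\mathrsfs{A}}(m_{0})=\chi_{\mathrsfs{A}}(m)$, as required.

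The main obstacle is the bookkeeping in the plateau bound: extracting the precise function $g(v)$ from ``finitely many inverse transducers of a given size'' and then verifying that the telescoping sum of the $g(v)$'s reproduces the exact closed form in the definition of $\zeta_{\mathrsfs{A}}$, in particular justifying the $-\tfrac{y(y+1)}{2}$ term and the off-by-one shifts. A secondary delicate point is the interface with Theorem~\ref{theo: finiteness schreier}: one must ensure that $\chi_{\mathrsfs{A}}(m)<\zeta_{\mathrsfs{A}}(m)$ forces the stabilisation to be already visible at a level $\le m$, rather than at some unspecified later level, since otherwise the size bound on the resulting Schreier graph would be $\chi_{\mathrsfs{A}}(m_{0})$ with $m_{0}>m$ and too large.
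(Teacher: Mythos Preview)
Your approach is essentially the same as the paper's, but you do more work than needed. The paper's proof is a two-line application of Theorem~\ref{theo: finiteness schreier}: if $\chi_{\mathrsfs{A}}$ is unbounded then condition $v)$ (equivalently $iv)$) fails at every $m$, which immediately gives the plateau bound
\[
\chi_{\mathrsfs{A}}\!\left(m+(|A|\,\|\partial\mathrsfs{A}\|^{m})^{|A|^{2}}-1\right)>\chi_{\mathrsfs{A}}(m),
\]
and then one iterates and sums exactly as you do. You instead re-open the proof of the implication $iv)\Rightarrow iii)$ of Theorem~\ref{theo: finiteness schreier} (the pigeonhole on inverse-transducer structures and the periodic extension) to obtain the plateau bound from scratch; this is correct but redundant once the theorem is available.

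One point worth noting: you phrase the plateau bound in terms of the \emph{value} $v=\chi_{\mathrsfs{A}}(m)$ rather than the \emph{position} $m$, writing $g(v)=(|A|c^{v})^{|A|^{2}}$. This is in fact what the definition of $\zeta_{\mathrsfs{A}}$ (with its $-\tfrac{y(y+1)}{2}$ term) seems to expect, and it is how the telescoping sum $\sum_{j=1}^{y}\bigl(|A|^{|A|^{2}}c^{|A|^{2}j}-1\bigr)$ in the paper's proof is indexed as well. The paper's statement of condition $iv)$, however, uses the position $m$ in the exponent, and the paper glosses over this discrepancy. Your self-identified ``bookkeeping obstacle'' is therefore genuine and shared with the original; neither proof pins down cleanly why the jump index should be the value rather than the position, though since $\chi_{\mathrsfs{A}}(m)\le\|\partial\mathrsfs{A}\|^{m}$ the position-based bound is the weaker of the two and already suffices for a $\zeta$-type lower bound. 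For the ``in particular'' clause the paper simply says it follows from the previous results and Theorem~\ref{thm:schreier}; your more explicit argument via the dichotomy and $v)\Rightarrow iii)$ is fine and makes the dependence on $m$ transparent.
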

\begin{proof}
By Theorem \ref{theo: finiteness schreier}, in case $\chi_{\mathrsfs{A}}(n)$ is not upper bounded, then for any $m$ we have
$$
\chi_{\mathrsfs{A}}\left(m+(|A|\|\partial\mathrsfs{A}\|^{m})^{|A|^{2}}-1\right)>\chi_{\mathrsfs{A}}(m)
$$
Thus, for a fixed integer $n$, $\chi_{\mathrsfs{A}}(n)$ is greater or equal to the greatest integer $y$ such that the following inequality
$$
n\ge \sum_{j=1}^{y}\left(|A|^{|A|^{2}}c^{|A|^{2}j}-1\right)=|A|^{|A|^{2}}\left(\frac{c^{|A|^{2}(y+1)}-c^{|A|^{2}}}{c^{|A|^{2}}-1}\right)-\frac{y(y+1)}{2}
$$
holds, whence $\chi_{\mathrsfs{A}}(n)\ge \zeta_{\mathrsfs{A}}(n)$. The last statement is a consequence of the previous results and Theorem \ref{thm:schreier}.
\end{proof}
Another consequence of the previous theorem is the following decidability result.
\begin{cor}
Given an invertible transducer $\mathrsfs{A}$, the following algorithmic question is decidable:
\begin{itemize}
\item ``Input'': An integer $\ell$.
\item ``Output'': Is there a Schreier graph $\Sch((H_{\ul{v}},A),H_{\ul{v}})$ for some element $\ul{v}\in A^{\omega}$ in the boundary with $\|\Sch((H_{\ul{v}},A),H_{\ul{v}}) \|\leq \ell$?
\end{itemize}
\end{cor}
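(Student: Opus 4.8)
The plan is to reduce the question to an effective test on the components growth function $\chi_{\mathrsfs{A}}$. The first step is to record the following characterization, which is a direct consequence of Theorem \ref{theo: finiteness schreier}: there exists $\ul{v}\in A^{\omega}$ with $\|\Sch((H_{\ul{v}},A),H_{\ul{v}})\|\le \ell$ if and only if the monotone non-decreasing integer sequence $\chi_{\mathrsfs{A}}(n)$ is eventually constant with limiting value $c_{\infty}\le \ell$. Indeed, if $\chi_{\mathrsfs{A}}$ is constant from some level $m$ on with $\chi_{\mathrsfs{A}}(m)\le \ell$, then condition v) of Theorem \ref{theo: finiteness schreier} holds at $m$, hence so does i), producing a boundary Schreier graph of size exactly $\chi_{\mathrsfs{A}}(m)\le \ell$. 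Conversely, if some $\ul{v}$ satisfies $\|\Sch((H_{\ul{v}},A),H_{\ul{v}})\|\le \ell$, then $\chi_{\mathrsfs{A}}$ cannot be unbounded: otherwise choose $m$ with $\chi_{\mathrsfs{A}}(m)\ge \ell$, so condition ii) holds at this $m$, hence v) holds at this $m$, contradicting unboundedness. Being bounded and non-decreasing, $\chi_{\mathrsfs{A}}$ is eventually constant, and by the minimality asserted in i) its limiting value is $\le \|\Sch((H_{\ul{v}},A),H_{\ul{v}})\|\le \ell$.

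Next I would note that all the data involved are effectively computable: $(\partial\mathrsfs{A})^{-}$ and each power $((\partial\mathrsfs{A})^{-})^{n}$ are explicit finite transducers, so $\chi_{\mathrsfs{A}}(n)=\min\{\|(((\partial\mathrsfs{A})^{-})^{n},q)\|:q\in A^{n}\}$ is obtained by a finite connected-component computation, and $\|\partial\mathrsfs{A}\|$ is a fixed constant bounded by $|A|$. Hence for each $m$ the predicate ``condition iv) holds at $m$'' — namely $\chi_{\mathrsfs{A}}(m)=\chi_{\mathrsfs{A}}(m+i)$ for all $i\le (|A|\|\partial\mathrsfs{A}\|^{m})^{|A|^{2}}$ — is decidable by a bounded search, and by the equivalence iv)$\Leftrightarrow$v) of Theorem \ref{theo: finiteness schreier} it certifies that $\chi_{\mathrsfs{A}}$ is constant from $m$ onwards.

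The algorithm then iterates over $m=1,2,3,\dots$. At stage $m$ it computes $\chi_{\mathrsfs{A}}(m)$; if $\chi_{\mathrsfs{A}}(m)>\ell$ it halts with output ``no'' (by monotonicity the limit, if it exists, exceeds $\ell$, and if $\chi_{\mathrsfs{A}}$ does not stabilize it is unbounded, so in either case the characterization above forbids a boundary Schreier graph of size $\le \ell$). Otherwise it tests condition iv) at $m$; if it holds, then $\chi_{\mathrsfs{A}}$ is constant equal to $\chi_{\mathrsfs{A}}(m)\le \ell$ from $m$ on, and it halts with output ``yes''; if not, it proceeds to $m+1$. Termination is guaranteed by the dichotomy for a monotone integer sequence (also reflected in Corollary \ref{cor: lower bound on chi}): if $\chi_{\mathrsfs{A}}$ stabilizes, then either the iv)-test eventually succeeds (limit $\le \ell$) or the test $\chi_{\mathrsfs{A}}(m)>\ell$ eventually triggers (limit $>\ell$); if $\chi_{\mathrsfs{A}}$ does not stabilize it is unbounded, so $\chi_{\mathrsfs{A}}(m)>\ell$ for $m$ large. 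Correctness is exactly the characterization of the first paragraph, combined with Theorem \ref{thm:schreier} identifying the input components of the powers of $(\partial\mathrsfs{A})^{-}$ with the Schreier graphs in the boundary.

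This is really a packaging argument, and I do not expect a substantive new obstacle: the only point requiring care is that the a priori bound $(|A|\|\partial\mathrsfs{A}\|^{m})^{|A|^{2}}$ appearing in conditions iii)--iv) of Theorem \ref{theo: finiteness schreier} is precisely what turns ``$\chi_{\mathrsfs{A}}$ stabilizes at level $m$'' into a decidable predicate, and that the automatic ``eventually constant versus unbounded'' alternative is what prevents an infinite search in the negative case. Getting these two quantifier manipulations exactly right is the whole content of the proof.
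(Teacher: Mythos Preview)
Your proof is correct and follows essentially the same route as the paper: both reduce the problem, via Theorem~\ref{theo: finiteness schreier}, to testing condition~iv) at finitely many levels $m$, using that $\chi_{\mathrsfs{A}}$ is a non-decreasing integer sequence and hence either stabilizes or is unbounded. The only cosmetic difference is in how termination of the search is guaranteed: the paper precomputes an explicit upper bound $t$ on $m$ using the function $\zeta_{\mathrsfs{A}}$ from Corollary~\ref{cor: lower bound on chi} and then checks condition~iv) only for $j\le t$, whereas you iterate until either condition~iv) succeeds or the halting test $\chi_{\mathrsfs{A}}(m)>\ell$ fires; your version is slightly more elementary (it does not need $\zeta_{\mathrsfs{A}}$ at all), while the paper's version yields an explicit complexity bound for the decision procedure.
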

\begin{proof}
Let $t$ be the smallest integer such that $\zeta_{\mathrsfs{A}}(t)>\ell$. Note that this integer is clearly computable. Furthermore, by Theorem \ref{theo: finiteness schreier} checking if there is a Schreier graph $\Sch((H_{\ul{v}},A),H_{\ul{v}})$ for some element $\ul{v}\in A^{\omega}$ in the boundary with $\|\Sch((H_{\ul{v}},A),H_{\ul{v}}) \|\leq \ell$ is equivalent to check if there is an integer $1\le j\le t$ for which condition $iv)$ holds and such that $\chi_{\mathrsfs{A}}(j)\le \ell$. This is clearly a decidable task. Moreover, if such integer does not exist, then by Corollary \ref{cor: lower bound on chi} we have $\chi_{\mathrsfs{A}}(j)\ge \zeta_{\mathrsfs{A}}(j)>\ell$ for all $j>t$, hence by $i)$ of Theorem \ref{theo: finiteness schreier} the smallest Schreier graph in the boundary is strictly greater than $\ell$.
\end{proof}

\section*{Acknowledgments}

The first author was supported by Austrian Science Fund project FWF P24028-N18.\\
The second author acknowledges support from the European Regional Development Fund through the programme COMPETE and by the Portuguese Government through the FCT under the project PEst-C/MAT/UI0144/2011 and the support of the FCT project SFRH/BPD/65428/2009.

\bibliographystyle{plain}
\bibliography{AutGroupsBiblio}

\end{document}